\title[Complexity of Kazhdan--Lusztig varieties]{Complexity of the usual torus action on Kazhdan--Lusztig varieties}
\author{Maria Donten-Bury}
\address{Instytut Matematyki UW, Banacha 2, 02-097 Warszawa, Poland}
\email{M.Donten@mimuw.edu.pl}
\author{Laura Escobar}
\address{Department of Mathematics and Statistics\\ 
Washington University in St.\ Louis\\ One Brookings Drive \\ St.\ Louis, Missouri 63130 \\ U.S.A. }
\email{laurae@wustl.edu}
\author{Irem Portakal}
\address{Technische Universität München, Lehrstuhl für Mathematische Statistik 85748 Garching b. München, Boltzmannstr. 3.}
\email{mail@irem-portakal.de}
\date{\today}
\newlength\tindent
\theoremstyle{plain}
\newtheorem{thm}{Theorem}[section]
\newtheorem{thm*}{Theorem}
\newtheorem{lem}[thm]{Lemma}
\newtheorem{prop}[thm]{Proposition}
\newtheorem{cor}[thm]{Corollary}
\theoremstyle{definition}
\newtheorem{rem}[thm]{Remark}
\newtheorem{ex}[thm]{Example}
\newtheorem{qu}[thm]{Question}
\newtheorem{defn}[thm]{Definition}
\DeclareMathOperator{\id}{id}
\DeclareMathOperator{\Spec}{Spec}
\DeclareMathOperator{\cone}{Cone}
\DeclareMathOperator{\SW}{SW}
\DeclareMathOperator{\TV}{TV}
\DeclareMathOperator{\dom}{dom}
\DeclareMathOperator{\Ess}{Ess}
\DeclareMathOperator{\KL}{\mathcal{N}}
\newcommand{\RR}{\mathbb{R}}      
\newcommand{\ZZ}{\mathbb{Z}}      
\newcommand{\CC}{\mathbb{C}}      
\newcommand{\oDia}{D\degree}
\newcommand{\rank}{r}
\begin{document}

\maketitle

\makeatletter
\def\blfootnote{\xdef\@thefnmark{}\@footnotetext}
\makeatother

\blfootnote{\emph{MSC2020 Subject Classification:} Primary: 14M15, 14M25, 52B20, 05E10; Secondary: 05C20.}

\blfootnote{\emph{Key words and phrases:} Schubert variety, Kazhdan--Lusztig variety, weight cone, torus action, toric variety, $T$-variety, edge cone, directed graph.}

\begin{abstract}
We investigate the class of Kazhdan--Lusztig varieties, and its subclass of matrix Schubert varieties, endowed with a naturally defined torus action. 
Writing a matrix Schubert variety $\overline{X_w}$ as $\overline{X_w}=Y_w\times \CC^d$ (where $d$ is maximal possible), 
we show that $Y_w$ can be of complexity-$k$ exactly when $k\neq 1$.
Also, we give a combinatorial description of the extremal rays of the weight cone of a Kazhdan--Lusztig variety, which in particular turns out to be the edge cone of an acyclic directed graph. As a consequence we show that given permutations $v$ and $w$, the complexity of Kazhdan--Lusztig variety indexed by $(v,w)$ is the same as the complexity of the Richardson variety indexed by $(v,w)$. Finally, we use this description to compute the complexity of certain Kazhdan--Lusztig varieties.
\end{abstract}

\setcounter{tocdepth}{1}
\tableofcontents

\section{Introduction}

A group action on a variety very often gives significant information about its structure and allows one to simplify its description. 
A very well-known example of this phenomenon is the class of toric varieties, i.e.\ algebraic varieties endowed with an action of an algebraic torus $(\CC^*)^n$ such that there is a dense orbit. A toric variety can be completely described using combinatorial objects: a fan of convex polyhedral cones in $\mathbb{R}^n$, rational with respect to a lattice $\mathbb{Z}^n \subset \mathbb{R}^n$.
In~\cite{p-div_AH} Altmann and Hausen extended this description to the class of $T$-varieties, that is algebraic varieties with an algebraic torus action which does not necessarily have a dense orbit. In general, a $T$-variety $X$ can be presented in terms of its \emph{p-divisor} which is a partially combinatorial object, consisting of a special quotient of $X$ by the considered action (the geometric part) and a fan of rational polyhedral cones in the lattice related to the torus (the combinatorial part). If the complexity of the torus action, i.e.\ the codimension of the biggest orbit, is small, then the combinatorial part contains a lot of information about $X$. In particular, if complexity is~0 then $X$ is toric and the p-divisor is purely combinatorial. 

While toric varieties and their combinatorial description have been widely investigated, there is still a lot to find out about $T$-varieties of higher complexity, even in the case of complexity-1. 
Their geometric structure in general has been studied in~\cite{geometryofT}, but, in particular, there are not many non-trivial examples of p-divisors. 
Apart from the ones in~\cite{p-div_AH}, one may look at p-divisors coming from the spectrum of the Cox ring structure on a variety, see~\cite{p-div_AW}. 
An interesting general question is to find $T$-varieties such that their p-divisors are not too difficult to determine, but they do not come from a structure of a toric variety by downgrading the action. In particular, one may look for a complexity-1 $T$-variety such that the quotient is a curve of higher genus. Such examples could be later used to investigate the relation between the p-divisor description of a $T$-variety and its degenerations, see~\cite{p-div_AW}. Thus, p-divisors give a motivation for studying $T$-varieties of small complexity.

A related problem is to describe a given collection of varieties with a torus action in the language of $T$-varieties of \cite{p-div_AH}.
The goal of this paper is to begin to carry out this program for a class of algebraic varieties, called \emph{Kazhdan--Lusztig varieties}, defined by imposing certain determinantal conditions to matrices in $\CC^{n\times n}$, see Section~\ref{def_KL}.
Such a variety, denoted $\KL_{v,w}$ depends on two permutations $v,w \in S_n$, where $S_n$ denotes the symmetric group of degree $n$. 
This class of varieties has been used to describe local behavior of Schubert varieties in the flag variety, e.g.~in \cite{WY08}, and to
provide geometric explanations for various combinatorial phenomena in algebraic combinatorics, see e.g.~\cite{factschu,wooyong,CCLMZ,KW}.

Kazhdan--Lusztig varieties are endowed with a torus action, which we call the \emph{usual torus action} and describe in Section~\ref{sec: usualtorusaction}. We investigate the complexity of this action and present methods for finding classes of Kazhdan--Lusztig varieties of small complexity using results from representation theory and cones arising from directed graphs.
A related problem is to investigate the complexity of Schubert varieties, e.g.~\cite{Karuppuchamy,leemasudapark}.
As we will see in Sections~\ref{sec: klvarieties} and \ref{lowcomplexityexamples}, only in special cases does the complexity of a Schubert variety $X_w$ determine the complexity of Kazhdan--Lusztig varieties $\KL_{v,w}$.

We now describe the structure of the paper as well as the main results.
In Section~\ref{torusactiononmsv}, we briefly introduce $T$-varieties and toric ideals arising from directed graphs. We present a formula for the dimension of the \emph{edge cone} associated to this toric variety, which is used to determine the complexity of a $T$-variety. In Section~\ref{sec_torus_action_MSch} we study a special subclass of Kazhdan--Lusztig varieties: \emph{matrix Schubert varieties} which were introduced by Fulton in \cite{fulton}.
A matrix Schubert variety $\overline{X_w}$ is isomorphic to $Y_w \times \CC^d$ where $d$ is as large as possible and $Y_w$ is an affine variety. We show in Section~\ref{sec: complexity of the torus action on MSV} that the weight cone of the usual torus action on $Y_w$ is the edge cone of an acyclic bipartite graph. In~\cite{escobarmeszaros} the authors give a combinatorial criterion for $Y_w$ to be toric. By simple arguments using graphs to investigate the dimension of the weight cone, we hoped to classify complexity-1 matrix Schubert varieties, but surprisingly it turns out that there are none, while all other complexities can be obtained.
\begin{thm*}[Theorem~\ref{nocomplexity1matrixschu}, Theorem~\ref{othercomplexitymsv}]
There exist $T$-varieties $Y_w$ of complexity-$k$ with respect to usual torus action only for $k \neq 1$, where $\overline{X_w} \cong Y_w \times \mathbb{C}^d$ is a matrix Schubert variety with $d$ maximal possible.
\end{thm*}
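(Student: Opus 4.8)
The plan is to prove the theorem in two parts corresponding to the two cited theorems. First I need to understand the reduction: a matrix Schubert variety $\overline{X_w}$ splits off a maximal affine factor $\CC^d$, leaving $Y_w$ whose usual torus action has a weight cone realized (by the result stated just above) as the edge cone of an acyclic bipartite graph $G_w$ attached to the permutation $w$. Since complexity is the codimension of the generic orbit, and since for a $T$-variety the dimension of the generic orbit equals the dimension of the weight cone (equivalently the rank of the lattice generated by the torus weights), the complexity of $Y_w$ is computed as
\begin{equation*}
\operatorname{complexity}(Y_w)=\dim Y_w-\dim(\text{weight cone of }G_w).
\end{equation*}
So everything reduces to a combinatorial comparison between $\dim Y_w$ and the dimension of the edge cone of $G_w$, for which I would invoke the dimension formula for edge cones promised in Section~\ref{torusactiononmsv}.

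For the nonexistence part (Theorem~\ref{nocomplexity1matrixschu}), the strategy is to show that complexity $1$ can never occur. The natural approach is a parity or connectivity argument on the bipartite graph $G_w$: the edge-cone dimension formula typically expresses $\dim(\text{weight cone})$ in terms of the number of vertices minus the number of connected components of $G_w$ (this is the standard rank of the incidence/edge lattice of a bipartite graph). I would compare this with $\dim Y_w$, the number of edges (or the relevant count of essential boxes) minus the split-off dimension $d$, and argue that the gap $\dim Y_w-\dim(\text{weight cone})$ jumps between $0$ and values $\geq 2$ without ever landing on $1$. Concretely, I expect that each connected component beyond the first contributes, together with the structure forced by $w$ being a permutation, an even or at least a $\geq 2$ increment to the codimension; the key lemma will be that a single extra independent cycle or component cannot be created in isolation in these bipartite graphs coming from permutations. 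The main obstacle is precisely pinning down this combinatorial rigidity: showing that the discrepancy between the ambient dimension and the lattice rank of $G_w$ is forced to skip the value $1$.

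For the realizability part (Theorem~\ref{othercomplexitymsv}), I would argue that every complexity $k\neq 1$ is attained. Complexity $0$ (the toric case) is already guaranteed to occur by the criterion of Escobar--M\'esz\'aros cited above, so I only need each $k\geq 2$. The plan is to exhibit explicit families of permutations $w$ whose graphs $G_w$ realize any prescribed codimension $k\geq 2$. I would start from a small permutation producing complexity exactly $2$ and then use a concatenation or direct-sum operation on permutations (e.g.\ placing permutation blocks along the diagonal, or inserting dominant pieces) that adds a controlled amount to both $\dim Y_w$ and the edge-cone dimension, so that the complexity increases by prescribed increments and sweeps out all integers $\geq 2$. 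The verification that such an operation changes the two quantities by exactly the desired amounts is routine once the base case and the dimension formula are in hand.

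The step I expect to be the genuine obstacle is the nonexistence argument: ruling out complexity $1$ requires a tight structural understanding of which edge-cone dimensions are achievable for the bipartite graphs $G_w$ arising from permutations, and it is this rigidity—rather than the constructions in the realizability half—that carries the surprising content of the theorem. The constructions and the base cases, by contrast, should reduce to direct computations with small explicit permutations once the edge-cone dimension formula is available.
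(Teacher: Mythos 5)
Your realizability half follows essentially the paper's route: the paper concatenates permutation blocks along the diagonal (forming $\gamma=\alpha_1\ldots\alpha_A\,(\beta_1+A)\ldots(\beta_B+A)$, whose graph $G^{\gamma}$ is the disjoint union of $G^{\alpha}$ and $G^{\beta}$, so complexities add) and then produces every $d\ge 2$ from the two base cases $Y_{3412}$ (complexity $2$) and $Y_{4312}$ (complexity $3$) via the numerical semigroup generated by $2$ and $3$. One small point: you will need an odd base case as well as an even one, since a purely additive construction seeded only by a complexity-$2$ example sweeps out only even values.

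The nonexistence half has a genuine gap: you do not give an argument, and the heuristic you sketch rests on a false premise. It is not true that each connected component beyond the first contributes an even, or at least a $\ge 2$, increment to the codimension. A hook-shaped component contributes defect $0$ (this is exactly the toric criterion of Escobar--M\'esz\'aros), a single connected component can contribute defect $2$ (for $w=3412$ the region $L(w)$ is connected and the complexity is $2$), and defect $3$ also occurs, so there is neither a parity obstruction nor a componentwise lower bound of $2$. What the paper actually proves is the following: (i) for each connected component $L_i$ of $L(w)$ one has $\dim(\sigma^{\vee}_{G_i})=|H_i|\le |L_i\cap L'(w)|$, where $H_i$ is the SW-border of $L_i$; (ii) if the total complexity were $1$, then exactly one component $L_j$ would satisfy $L_j\cap L'(w)=H_j\cup\{(a,b)\}$ for a single extra box $(a,b)$, all others having defect $0$; and (iii) a case analysis on whether $(a,b-1)$ and $(a+1,b)$ lie in $H_j$, exploiting the constraints that $\oDia(w)$ and $\dom(w)$ inherit from the positions of the $1$s in the permutation matrix, eliminates every case. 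This last step is the real content of the theorem; it cannot be replaced by a rank or connectivity count on the bipartite graph alone, because the count only yields the inequality in (i). To close the gap you would need an argument of comparable specificity about which skew shapes $L(w)$ and subsets $L'(w)$ can actually arise from a permutation.
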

In Section~\ref{sec: klvarieties}, we investigate properties of the usual torus action on general Kazhdan--Lusztig varieties. The main result of this section is the description of the generators of the weight cone of the usual torus action on~$\KL_{v,w}$. The description is combinatorial, it relies on the properties of permutation $v,w$ which determine~$\KL_{v,w}$. Given a permutation $v$ denote by $\ell(v)$ the length of the permutation, i.e.\ the number of inversions of $w$.

\begin{thm*}[Theorem~\ref{permittingdeletingedges}]
The weight cone of the usual torus action on $\KL_{v,w} \cong \overline{X_w}\cap \Omega_v^{\circ}$ is spanned by weights $e_{v(j)} - e_{i}$ corresponding to the coordinate $z_{ij}$ of the opposite Schubert cell $\Omega_v^\circ$ such that $t_{v(j),i}v \leq w$ and $\ell(t_{v(j),i}v) - \ell(v) = 1$, where $t_{v(j),i}$ is the permutation transposing $v(j)$ and~$i$.
\end{thm*}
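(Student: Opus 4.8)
The plan is to realize the weight cone as the cone generated by the $T$-weights of the coordinate functions that do not vanish identically on $\KL_{v,w}$, and then to extract from that generating set the irredundant rays. Write $\CC[\Omega_v^\circ]=\CC[z_{ij}]$ and $\CC[\KL_{v,w}]=\CC[z_{ij}]/I_{v,w}$. The ideal $I_{v,w}$ is $T$-stable and the $z_{ij}$ are weight vectors of pairwise distinct weights, so every weight occurring in $\CC[\KL_{v,w}]$ is a nonnegative combination of the weights of the non-vanishing variables (a monomial is zero once any of its variables vanishes in the quotient), and conversely each such weight occurs; hence the weight cone equals $\Cone\{\mathrm{wt}(z_{ij}):z_{ij}\not\equiv 0\text{ on }\KL_{v,w}\}$. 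From the usual torus action of Section~\ref{sec: usualtorusaction} one reads off $\mathrm{wt}(z_{ij})=e_{v(j)}-e_i$, and since $v$ is a bijection these weights are distinct, so the $T$-stable curves through the fixed point are exactly the coordinate axes. We may assume $v\le w$, as otherwise $\KL_{v,w}=\emptyset$; then the origin of $\Omega_v^\circ$ is the unique $T$-fixed point $e_v$.

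Next I would determine which variables survive. The easy direction is ``$t_{v(j),i}v\le w\Rightarrow z_{ij}\not\equiv 0$'': the coordinate axis $A_{ij}\subset\Omega_v^\circ$ is a one-dimensional $T$-orbit whose closure in the flag variety is the $T$-stable $\bP^1$ joining $e_v$ and $e_{t_{v(j),i}v}$; once $t_{v(j),i}v\le w$ both fixed points lie in the closed, $B$-stable Schubert variety $X_w$, so this curve lies in $X_w$, giving $A_{ij}\subset\KL_{v,w}$ and $z_{ij}\not\equiv 0$. The converse ``$z_{ij}\not\equiv 0\Rightarrow t_{v(j),i}v\le w$'' is the technical heart: here I would translate the non-vanishing of $z_{ij}$ into the rank conditions cutting out $X_w$ (the essential set and rank matrix of $w$), showing that $t_{v(j),i}v\not\le w$ forces some defining minor to make $z_{ij}$ vanish on $\KL_{v,w}$. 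Together these identify the generating set of the weight cone with $\{e_{v(j)}-e_i:t_{v(j),i}v\le w\}$, realizing it as the edge cone of an acyclic directed graph on $\{1,\dots,n\}$ with one arc per such pair, as the abstract promises.

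It then remains to pass to the spanning rays, i.e.\ to prove $\Cone\{e_{v(j)}-e_i:t_{v(j),i}v\le w\}=\Cone\{e_{v(j)}-e_i:t_{v(j),i}v\le w,\ l(t_{v(j),i}v)-l(v)=1\}$. The inclusion $\supseteq$ is immediate, and for $\subseteq$ it suffices to show that each generator with $l(t_{v(j),i}v)-l(v)\ge 2$ is redundant. Since any $e_a-e_b$ telescopes as $\sum_\ell(e_{c_{\ell-1}}-e_{c_\ell})$ along a sequence $a=c_0,\dots,c_m=b$, this reduces to the combinatorial statement that when $t_{v(j),i}v\le w$ has length jump at least $2$ there are intermediate values $c_\ell$ with each $t_{c_{\ell-1},c_\ell}v$ a cover of $v$ below $w$ whose weights telescope to $e_{v(j)}-e_i$. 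I would prove this by choosing a saturated Bruhat chain inside $[v,\,t_{v(j),i}v]\subseteq[v,w]$ and using the exchange/subword properties of Bruhat order to arrange the successive transpositions so that the values they move nest correctly.

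I expect the main obstacle to be the survival criterion of the second paragraph, namely extracting the Bruhat inequality $t_{v(j),i}v\le w$ from the determinantal conditions; the second delicate point is the telescoping step, where one must guarantee a saturated chain whose cover-transpositions produce exactly the needed value-intervals, handling the length-$\ge 2$ transpositions uniformly rather than case by case. This uniform treatment is precisely where the edge-cone description of the directed graph is meant to do the work.
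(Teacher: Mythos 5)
Your skeleton matches the paper's: reduce the weight cone to the cone on the weights of the variables $z_{ij}$ that do not vanish identically on $\KL_{v,w}$, characterize those by $t_{v(j),i}v\le w$, and then discard the weights with length jump $\ge 2$ by telescoping. Your $T$-curve argument for the implication $t_{v(j),i}v\le w\Rightarrow z_{ij}\not\equiv 0$ is a clean alternative to the paper's. However, the two steps you yourself flag as the hard ones are genuinely missing, and for one of them the method you propose would not work. First, the converse survival criterion ($z_{ij}\not\equiv 0\Rightarrow t_{v(j),i}v\le w$) is only announced (``I would translate the non-vanishing of $z_{ij}$ into the rank conditions\dots''); this is exactly the content of the paper's Lemma~\ref{lem-weights}, which is obtained not by manipulating Fulton's determinants but by transporting the known description of the $T$-weights of $T_{vB}(X_w)$ from \cite[Thm~5.5.3]{BL} through the $T$-equivariant inclusion $\Sigma_v\hookrightarrow vU_-$ and the splitting $vU_-\cap\overline{X_w}\cong\CC^{l(v)}\times\KL_{v,w}$. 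Without some such input your plan has no actual proof of this direction.

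Second, the telescoping step cannot be done with ``a saturated Bruhat chain inside $[v,\,t_{v(j),i}v]$.'' The covers in such a chain are covers of the intermediate permutations $u_\ell$, not of $v$, so the transpositions produced need not be of the form $t_{v(c_p),v(c_{p+1})}$ with each $t_{v(c_p),v(c_{p+1})}v$ a cover of $v$, their associated weights need not telescope to $e_{v(j)}-e_i$, and the intermediate weights need not even correspond to coordinates of $Z^{(v)}$: one needs positions and values to increase simultaneously so that each $(v(c_{p+1}),c_p)$ lies in $\oDia(v)$. The paper's construction does this concretely: when $l(t_{v(j),i}v)-l(v)>1$ there is a $1$ of $v$ at some $(v(c),c)$ inside the rectangle with corners $(i,j)$ and $(v(j),v^{-1}(i))$; one checks $t_{v(j),v(c)}v\le w$ and $t_{v(c),i}v\le w$ directly from the rank-matrix definition of Bruhat order (the ranks of $t_{v(c),i}v$ and $t_{v(j),i}v$ agree on the relevant region), and then iterates greedily to produce a monotone chain of $1$s of $v$ whose weights sum to $e_{v(j)}-e_i$. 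This is the argument your proposal would need to supply. (Note also that the paper's version of the statement asserts these weights are the \emph{extremal ray generators}, and its proof includes an irredundancy argument; your proposal addresses only the spanning claim as stated.)
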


One observes immediately that this weight cone is also an edge cone of an acyclic graph. This result, joined with the methods of investigating torus action via associated graphs, gives a tool which significantly simplifies the task of finding the complexity of a Kazhdan--Lusztig variety with the usual torus action. 
As a consequence of this theorem, we show in \Cref{cor_richi_KL} that the complexity of $\KL_{v,w}$ is the same as the complexity of the Richardson variety indexed by $(v,w)$.

Lastly, Section~\ref{lowcomplexityexamples} is a study of special cases with interesting examples of $T$-varieties.  
More precisely, given $d\in\mathbb{N}$ we give a classification for the Kazhdan--Lusztig varieties of complexity-$d$ when $v$ is the identity or a simple reflection.
Concretely, in Theorem~\ref{visid} and Theorem~\ref{vistransp} we give a formula for the complexity in terms of a reduced word expression of~$w$. 

\section{Toric Background}\label{torusactiononmsv}

\subsection{$T$-varieties of complexity-$d$}
\label{sec-general-t-varieties}

Let us discuss some relevant background on $T$-varieties.
For a deeper discussion of $T$-varieties, we refer the reader to \cite{p-div_AH} and \cite{geometryofT}.

\noindent Given a torus $T$, let ${\sf M}(T)$ denote the character lattice of $T$ and ${\sf M}(T)_{\RR}$ the real vector space spanned by ${\sf M}(T)$.
\begin{defn}
An affine normal variety $X$ is a \textit{T-variety} of complexity-$d$ if it admits an effective $T$ torus action with $\text{dim}(X) - \text{dim}(T) = d$.
\end{defn}
Note that normal affine toric varieties are $T$-varieties of complexity zero.

Let us describe how to compute the complexity of a $T$-variety $X$ using polyhedral methods.
The convex polyhedral cone generated by all weights of the torus action on $X$ in ${\sf M}(T)_{\RR}$ is called the \emph{weight cone} of the action.
Let $p$ be a general point in $X$.
The closure of the torus orbit $\overline{T\cdot p}$ is the affine normal toric variety associated to the weight cone of the action.
Therefore, the dimension of $\overline{T\cdot p}$ is equal to the dimension of the weight cone of the action.
Since the action of $T$ on $\overline{T\cdot p}$ is effective, $\dim(T)=\dim(\overline{T\cdot p})$ and it follows that the dimension of the weight cone is equal to the complexity of the $T$-action.
We conclude that the complexity of the $T$-variety $X$ is equal to $\dim(X)$ minus the dimension of the weight cone. If the $T$-action is not effective then the subgroup $S=\bigcap_{p\in X}T_p$ is nontrivial, where $T_p=\{t\in T \mid t\cdot p=p\}$. 
The induced $T/S$-action on $X$ is given by $tS\cdot p=t\cdot p$ and it is both well defined and effective.
Moreover, the weights of the $T/S$-action are the same as the weights of the $T$-action. 
It follows that the complexity of the $T/S$-action is also equal to $\dim(X)$ minus the dimension of the weight cone. 

Throughout this paper, whenever we have a $T$-action on $X$ that is not effective, we will abuse notation and refer to $X$ as a $T$-variety with complexity equal to that of the $T/S$-action.

\subsection{Toric ideals arising from directed graphs}\label{sec-toric-graph}
Since we work with directed acyclic graphs throughout this paper, we restrict our attention to these graphs, despite the fact that the following construction can be done for general directed graphs. \\
Let $G$ be a connected directed acyclic graph with $n$ vertices and $q$ edges. 
We denote the edge set by $E(G)$ and its vertex set by $V(G)$. We choose the notation $(a \to b)$ for the directed edge from vertex $a$ to vertex $b$ in order to avoid a conflict with other notation. 
The kernel $I_G$ of the following morphism
\begin{align*}
\Phi_G \colon \CC[x_1, \ldots, x_{q} ]&\rightarrow \CC[t_1^{\pm{1}}\ldots, t_{n}^{\pm{1}}] \\
x_{e} &\mapsto t_i t_j^{-1},
\end{align*}
where $e=(i \to j) \in E(G)$, is called the \emph{edge ideal} of $G$, see \cite{hibi-ohsugi99,gitler}.
The edge ideal is prime and generated by binomials.
Concretely, let $c :=(v_{i_1}, \ldots, v_{i_{r}})$ be a (not directed) cycle i.e.\ a closed walk in which only the first and the last vertices are equal. Here $v_{i_{k}}$ is an edge of $G$, for $k \in [r]$. We split the cycle into two disjoint sets of edges $c_{+}$ and $c_{-}$ where $c_{+}$ consists of the clockwise oriented edges in the cycle $c$ and $c_{-}=c \setminus c_{+}$. Then we define the following binomial $$f_{c}:= \prod_{v_i \in c_{+}} x_{i} - \prod_{v_i \in c_{-}} x_{i}\in \CC[x_1, \ldots, x_q].$$ If $c_{+}=\emptyset$ or $c_{-}=\emptyset$, we set $\prod_{v_i \in c_{+}} x_{i}=1$ or $\prod_{v_i \in c_{-}} x_{i}=1$. We say that a cycle $\Gamma$ is primitive if it does not have any chord i.e.\ an edge which is not part of the cycle but connects two vertices of the cycle.
\begin{thm}\cite[Cor 4.5]{gitler}\label{thm-primitive}
The edge ideal $I_G$ associated to $G$ is generated by the binomials $f_{c}$ where $c$ is a primitive cycle.
\end{thm}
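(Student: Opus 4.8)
The plan is to establish that $\ker \Phi_G$ is generated by the binomials $f_c$ where $c$ ranges over primitive cycles, which is precisely the content of the cited \cite[Cor 4.5]{gitler}. Since the statement is attributed to an external reference, the essential task is to show that this toric ideal has a combinatorial generating set indexed by the chordless cycles of $G$. I would proceed by first recalling the general theory of toric ideals: $\ker \Phi_G$ is a lattice ideal, and since the map sends $x_e$ to a Laurent monomial $t_i t_j^{-1}$, it is in fact prime and equals the saturation of the ideal generated by the binomial relations coming from the integer kernel of the matrix whose columns are the vectors $e_i - e_j$ (the incidence data of the directed edges).

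First I would identify the integer lattice $L = \ker(A)$, where $A$ is the $n \times q$ incidence matrix whose column for edge $(i \to j)$ is $e_i - e_j$. A standard result (e.g.\ Sturmfels) says the toric ideal equals $I_L = \langle x^{u^+} - x^{u^-} : u \in L \rangle$, so the content is to show $I_L$ is generated by the binomials attached to primitive cycles. The kernel $L$ is spanned by the signed incidence vectors of cycles in $G$, where a cycle $c$ contributes $\sum_{e \in c_+} e - \sum_{e \in c_-} e$; this recovers exactly the binomial $f_c$ after clearing to positive and negative supports. Thus every element of $L$ is an integer combination of cycle vectors, and the generating binomials $x^{u^+}-x^{u^-}$ correspond to cycles.

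The main step, and the expected obstacle, is the reduction from \emph{all} cycles to \emph{primitive} (chordless) cycles. Here I would invoke the notion of a primitive binomial: a binomial $x^{u^+}-x^{u^-}$ in a toric ideal is called primitive if there is no other binomial $x^{v^+}-x^{v^-}$ in the ideal with $x^{v^+} \mid x^{u^+}$ and $x^{v^-}\mid x^{u^-}$. The primitive binomials form the Graver basis, which generates the ideal. The combinatorial heart is then the claim that for edge ideals, a binomial $f_c$ is primitive exactly when the cycle $c$ is chordless: if $c$ had a chord, the chord would split $c$ into two shorter cycles whose associated binomials divide $f_c$ in the required sense, so $f_c$ would be redundant; conversely a chordless cycle yields a primitive binomial. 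Carrying out this divisibility argument carefully—checking that a chord produces two genuine subcycles whose binomials lie in the ideal and divide $f_c$—is the technical core, but since the statement is cited verbatim from \cite{gitler}, I would present this reduction as a sketch and defer the full verification to that reference.
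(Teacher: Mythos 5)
The paper itself gives no proof of this statement---it is quoted verbatim from \cite{gitler}---so there is no in-paper argument to compare against; your sketch has to stand on its own. The first half of it does: identifying $\Kerne\Phi_G$ with the lattice ideal of the kernel of the incidence matrix, and identifying that kernel with the cycle space of $G$, is the standard route, and (since the incidence matrix is totally unimodular) it legitimately reduces the problem to showing that the binomials of \emph{all} simple cycles generate.

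The genuine gap is in your reduction from all cycles to chordless ones. You claim that $f_c$ is a primitive (Graver) binomial exactly when $c$ is chordless, because a chord would split $c$ into two subcycles whose binomials ``divide $f_c$ in the required sense.'' This fails: the two subcycles $c_1,c_2$ cut out by a chord $e_0$ both \emph{use} the edge $e_0$, so the variable $x_{e_0}$ occurs in $f_{c_1}$ and $f_{c_2}$ but in neither monomial of $f_c$, and hence no monomial of $f_{c_i}$ can divide a monomial of $f_c$. Concretely, for the $4$-cycle with edges $1\to 2$, $3\to 2$, $3\to 4$, $1\to 4$ and chord $3\to 1$, the binomial $f_c=x_{12}x_{34}-x_{32}x_{14}$ is primitive (no proper divisor pair of its monomials has equal image under $\Phi_G$) even though $c$ has a chord; in fact, for a totally unimodular matrix the Graver basis equals the set of circuits, i.e.\ \emph{all} simple cycles, so the Graver machinery cannot see chords at all. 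The correct mechanism is an explicit ideal-membership (syzygy) computation: with $f_{c_1}=x_{12}x_{31}-x_{32}$ and $f_{c_2}=x_{14}x_{31}-x_{34}$ one has $f_c=x_{14}f_{c_1}-x_{12}f_{c_2}$, and in general a chord yields monomials $m_1,m_2$ with $f_c=m_1f_{c_1}+m_2f_{c_2}$, after which one concludes by induction on the length of the cycle. Replacing your primitivity step by this syzygy-plus-induction argument gives a correct outline of \cite{gitler}.
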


The affine normal toric variety associated to $G$ is defined as 
    $$\TV(G):=\Spec(\CC[x_1, \ldots, x_{q}]\big / I_G)=\Spec(\CC[\sigma^{\vee}_G \cap {\sf M}(T) ]),$$ 
where $\sigma^{\vee}_G \subseteq \sf M(T)_{\RR}$ is called the (dual) \emph{edge cone}.
Let $e_i \in \mathbb{R}^{|V(G)|}$ be the $i$th standard basis vector. One can describe the dual edge cone precisely as $$\sigma^{\vee}_G=\cone(e_i - e_j | \ (i \to j) \in E(G)).$$

\noindent The following result is crucial while calculating the complexity of a $T$-variety throughout this paper.
\begin{lem}\label{dimofcone}
Let $G$ be a directed acyclic graph with $n$ vertices and $k$ connected components. The dimension of the edge cone $\sigma_G^{\vee} \subseteq \sf M_{\RR}$ is $n-k$.
\end{lem}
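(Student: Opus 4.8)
The plan is to pass from the cone to its linear hull and then to compute a rank. Since $\sigma_G^{\vee}$ is generated as a cone by the vectors $e_i - e_j$, and a polyhedral cone has the same dimension as the linear subspace it spans, I would first record that $\dim \sigma_G^{\vee} = \dim_{\RR} W$, where $W := \operatorname{span}_{\RR}(e_i - e_j \mid (i \to j) \in E(G)) \subseteq \mathsf{M}_{\RR}$. In other words, $\dim \sigma_G^{\vee}$ is the rank of the oriented vertex--edge incidence matrix of $G$; and since replacing a generator $e_i - e_j$ by its negative $e_j - e_i$ does not change $W$, the chosen orientation -- in particular the acyclicity hypothesis -- is irrelevant to this computation.

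Next I would split the problem across connected components. Let $V_1, \dots, V_k \subseteq V(G)$ be the vertex sets of the $k$ connected components, so that $\mathsf{M}_{\RR} = \bigoplus_{c=1}^{k} \RR^{V_c}$. Every edge of $G$ joins two vertices of the same component, hence $W = \bigoplus_{c=1}^{k} W_c$, where $W_c \subseteq \RR^{V_c}$ is the span of the edge vectors of the $c$-th component. It therefore suffices to prove $\dim W_c = |V_c| - 1$ for each $c$, since summing then yields $\dim W = \sum_{c} (|V_c| - 1) = n - k$.

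For the upper bound $\dim W_c \le |V_c| - 1$, I note that each generator $e_i - e_j$ has coordinate sum zero, so $W_c$ is contained in the hyperplane $\{x \in \RR^{V_c} : \sum_{v \in V_c} x_v = 0\}$, which has dimension $|V_c| - 1$. For the reverse inequality I would exhibit $|V_c| - 1$ linearly independent generators by choosing a spanning tree $T_c$ of the $c$-th component (which exists because the component is connected) and showing that the $|V_c| - 1$ vectors $e_i - e_j$ attached to the tree edges are linearly independent. The cleanest route is leaf-peeling: a finite tree has a leaf $\ell$, and the unique tree edge incident to $\ell$ is the only tree edge with a nonzero entry in coordinate $\ell$; hence in any vanishing linear combination the coefficient of that edge must be zero, and deleting $\ell$ reduces the claim to a strictly smaller tree, so induction on $|V_c|$ concludes (the base case $|V_c| = 1$ giving the empty edge set, which spans the zero space of dimension $0$).

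The only genuinely load-bearing step is the linear independence of the spanning-tree edge vectors; everything else is bookkeeping about coordinate subspaces and coordinate sums. I expect the leaf-peeling induction to carry it through without friction, and this is really the point at which connectedness of each component is used.
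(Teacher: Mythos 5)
Your proof is correct, and its overall skeleton coincides with the paper's: identify $\dim\sigma_G^{\vee}$ with the rank of the (oriented) incidence matrix, observe that the matrix is block-diagonal with one block per connected component, and reduce to the connected case. The one place you diverge is the connected case itself: the paper simply cites \cite[Lem 4.1]{gitler} for the fact that a connected graph on $m$ vertices has incidence matrix of rank $m-1$, whereas you prove it from scratch via the coordinate-sum-zero hyperplane (upper bound) and a spanning tree with leaf-peeling induction (lower bound). Both of those ingredients are standard and your execution is sound --- in particular, the leaf's unique incident tree edge is indeed the only tree edge with a nonzero entry in that coordinate, so the induction goes through, and the base case $|V_c|=1$ correctly handles isolated vertices as components contributing $0$. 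The trade-off is the usual one: the paper's version is shorter but rests on an external reference, while yours is self-contained and makes explicit exactly where connectedness of each component enters (existence of the spanning tree). Either is acceptable; nothing is missing from your argument.
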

\begin{proof}
The rays of $\sigma_G^\vee$ are the columns of the incidence matrix $A_G$ of $G$, so the dimension of $\sigma_G^\vee$ equals the rank of $A_G$.
By \cite[Lem 4.1]{gitler} if $G$ is connected, then the rank of the incidence matrix $A_G$ is $n-1$. For $G$ with connected components $G_1,\ldots,G_k$ after a relabelling of the vertices and edges we obtain 
\[A_G=\begin{bmatrix}
    A_{G_1}       &0 & 0 & \dots & 0 \\
    0           & A_{G_2}& 0 & \dots & 0 \\
   \vdots& 0& \ddots & & 0\\
   \vdots&\vdots&& \ddots& \vdots \\
    0      & 0 & 0& \dots & A_{G_k}
\end{bmatrix}.\]
Hence the rank of $A_G$ is $n-k$.
\end{proof}

\section{Torus action on matrix Schubert varieties}\label{sec_torus_action_MSch}

\noindent Throughout this section we choose conventions which are compatible with \cite{leemasuda} and \cite{wooyong}. 

\subsection{Background on matrix Schubert varieties}
Let $w \in S_n$ be a permutation. We denote its permutation matrix again by $w \in \mathbb{C}^{n\times n}$ with the convention 
 \begin{equation}
w_{ij} :=
    \begin{cases}
     1,& \text{if } w(j)=i, \\
      0, & \text{otherwise.}
    \end{cases}
  \end{equation}

Consider the following action of $B \times B$ on $\mathbb{C}^{n\times n}$, where $B$ is the group of invertible upper triangular matrices of size $n\times n$: 
\begin{align*}
\begin{split}
(B \times B) \times \mathbb{C}^{n\times n}&\rightarrow \mathbb{C}^{n\times n} 
\\
((X, Y), M) &\mapsto XMY^{-1}
\end{split}
\end{align*}
Given $a,b\in[n]:=\{1,2,\ldots,n\}$ and $M\in\mathbb{C}^{n\times n}$ let $M_{\square}^{a,b} \in \mathbb{C}^{(n-a+1) \times b}$ be the submatrix of $M$ consisting of rows $a,\ldots,n$ and columns $1,\ldots,b$. 
We denote the rank of $M_{\square}^{a,b}$ as $\rank_M(a,b)$.
We remark that $M \in B w B$ if and only if $\rank_M(a,b) = \rank_w(a,b)$ for all $a,b \in [n]$.  
\begin{defn}\label{def_mSv}
The matrix Schubert variety associated to permutation $w \in S_n$ is the Zariski closure $\overline{X_w}:=\overline{B w B}\subset\mathbb{C}^{n\times n}$.
\end{defn}

\noindent These varieties were first investigated in \cite{fulton}, where their defining ideals were described combinatorially in terms of Rothe diagrams.

\begin{defn}\label{rothediagram}
The \emph{Rothe diagram} of $w \in S_n$ is $D(w):= \{(w(j), i) \ | \ i <j, \ w(i)>w(j)\}$. 
\end{defn}
Note that each entry in $D(w)$ is in one-to-one correspondence with an inversion of $w$ and therefore $\ell(w) = |D(w)|$, where $\ell(w)$ is the Coxeter length of $w$.
Starting with the permutation matrix $w$, $D(w)$ is visualized by replacing each $1$ by a $\bullet$, deleting all $0$s, and placing a box in each entry of the permutation matrix $w$ that is not south or east of a $\bullet$.

\begin{defn}\label{def_opposite_Rothe}
The \emph{opposite Rothe diagram} of $w \in S_n$ is $D\degree(w):=\{(i,j) \ | \ w(j)<i, \ w^{-1}(i)>j\}$.
\end{defn}
\noindent In this case, $D\degree(w)$ is visualized in a similar way to $D(w)$, except that we place a box on each entry that is not north or east of a $\bullet$.
Note that $D\degree(w)=\{(w(i),j) \ | \ j<i, \ w(j)<w(i)\}$, 
    $$\ell(w) = |D(w)| = \frac{n(n-1)}{2} - |D\degree(w)|,$$
and the connected components of $D\degree(w)$ are Young diagrams in French notation.

\begin{defn} The set consisting of the north-east corners of each connected component of $D \degree(w)$ is called the \emph{essential set} of $w$ and denoted $\Ess(w)$. 
\end{defn}

\begin{thm}[{\cite[Prop.~3.3, Lem.~3.10]{fulton}}] \label{fultonmatrixschubert}
The matrix Schubert variety $\overline{X_w}$ is an affine variety of dimension $n^2 - |D\degree(w)|$. It is defined as a scheme by the determinants encoding the inequalities $r_M(a,b) \leq r_w(a,b)$ for all $(a,b) \in \Ess(w)$.
\end{thm}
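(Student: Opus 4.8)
The plan is to prove the dimension formula and the scheme-theoretic description in three stages: an orbit--stabilizer computation for the dimension, a set-theoretic identification of $\overline{X_w}$ with the locus carved out by the rank inequalities, and finally the reduction to the essential positions together with the radicality statement that makes the description scheme-theoretic.

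First I would obtain the dimension directly from the orbit structure. Since $\overline{X_w}=\overline{BwB}$ and $BwB$ is the image of the irreducible group $B\times B$ under $(X,Y)\mapsto XwY^{-1}$, it is irreducible and $\dim\overline{X_w}=\dim(B\times B)-\dim\Stab(w)$. The stabilizer of $w$ consists of the pairs $(X,w^{-1}Xw)$ for which both $X$ and $w^{-1}Xw$ are upper triangular; using $(w^{-1}Xw)_{kl}=X_{w(k),w(l)}$ one sees that the free entries of $X$ are exactly the $X_{pq}$ with $p\le q$ and $w^{-1}(p)\le w^{-1}(q)$. The $n$ diagonal entries and the non-inversions of $w^{-1}$ account for these, giving $\dim\Stab(w)=n+\binom{n}{2}-\ell(w)$, and hence, since $\dim(B\times B)=n(n+1)$, one gets $\dim\overline{X_w}=n(n+1)-\dim\Stab(w)=n^2-\binom{n}{2}+\ell(w)$. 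Substituting $\ell(w)=\binom{n}{2}-|D\degree(w)|$ from the excerpt collapses this to $n^2-|D\degree(w)|$.

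Next I would identify $\overline{X_w}$ set-theoretically with $Z_w:=\{M:r_M(a,b)\le r_w(a,b)\text{ for all }a,b\}$. One inclusion is immediate: on $BwB$ one has $r_M(a,b)=r_w(a,b)$, and each condition $r_M(a,b)\le k$ is Zariski closed, being the vanishing of the $(k+1)$-minors of the submatrix $M_{\square}^{a,b}$; these inequalities therefore persist on the closure, so $\overline{X_w}\subseteq Z_w$. For the reverse inclusion I would observe that the $B\times B$-action preserves each $r_M(a,b)$: left multiplication by an upper-triangular matrix replaces row $i$ by a combination of rows $\ge i$, preserving the row span of the rows indexed $\ge a$, and symmetrically right multiplication preserves the column span of the columns indexed $\le b$. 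Hence $Z_w$ is a union of orbits $BvB$, namely those with $r_v(a,b)\le r_w(a,b)$ for all $a,b$. By the rank criterion for the Bruhat order these are exactly the $v\le w$, and since $\overline{BwB}=\bigsqcup_{v\le w}BvB$ we conclude $Z_w=\overline{X_w}$.

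Finally I would pass to the essential-set description and address reducedness. The rank function $r_w$ is nondecreasing as the submatrix grows and changes by at most one when a single row or column is adjoined; a short propagation argument then shows that $r_M(a,b)\le r_w(a,b)$ at an arbitrary box follows from the inequalities at the north-east corners of the connected components of $D\degree(w)$, that is, at the positions of $\Ess(w)$. This cuts the defining equations down to the essential minors set-theoretically. The genuinely hard part --- and where I expect the main obstacle --- is to upgrade this to a scheme-theoretic statement, i.e.\ that the ideal $I$ generated by the essential minors is already radical (hence, being the ideal of the irreducible variety $\overline{X_w}$, prime). I would establish this by a Gröbner degeneration with respect to an antidiagonal term order: one shows the initial ideal of $I$ is a squarefree monomial (Stanley--Reisner) ideal and therefore radical, and radicality lifts back along the flat degeneration to $I$ itself. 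Together with the dimension count this shows that the essential determinants cut out $\overline{X_w}$ as a scheme.
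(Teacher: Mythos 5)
The paper does not prove this statement; it quotes it from Fulton (Prop~3.3, Lem~3.10), so the comparison is really against Fulton's argument. Your Stage~1 is correct: the stabilizer of $w$ in $B\times B$ is isomorphic to $B\cap wBw^{-1}$, of dimension $n+\binom{n}{2}-\ell(w)$, and the arithmetic with $\ell(w)=\binom{n}{2}-|D\degree(w)|$ gives $n^2-|D\degree(w)|$.

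The genuine gap is in Stage~2. The $B\times B$-orbits on all of $\CC^{n\times n}$ are not the $BvB$ with $v\in S_n$: they are indexed by \emph{partial} permutation matrices (at most one $1$ in each row and column), and $Z_w$ contains many such orbits --- the zero matrix satisfies every rank inequality yet lies in no $BvB$ with $v$ a permutation. For the same reason the identity $\overline{BwB}=\bigsqcup_{v\le w}BvB$ you invoke is false inside $\CC^{n\times n}$ (already $0=\lim_{t\to 0}tw\in\overline{BwB}$); it only holds after intersecting with $GL_n$. So the inclusion $Z_w\subseteq\overline{X_w}$ is not established: what must be shown is that every partial permutation matrix $\tilde v$ with $r_{\tilde v}(a,b)\le r_w(a,b)$ for all $(a,b)$ lies in $\overline{BwB}$, which is exactly the content of Fulton's Lem~3.10 and needs an actual degeneration or induction (e.g.\ exhibiting such a $\tilde v$ as a limit of partial permutations of strictly larger rank still dominated by $r_w$, terminating at honest permutations $v\le w$). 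The rank criterion for Bruhat order only covers the invertible stratum. Separately, in Stage~3 the reduction to $\Ess(w)$ is fine, but for radicality you defer to the antidiagonal Gr\"obner degeneration with squarefree initial ideal; that is the Knutson--Miller theorem, which postdates Fulton and is harder than the statement being proved, so as written this step is a citation rather than a proof. Fulton's own route establishes primality of the essential-minor ideal by induction on the essential set, reducing to the classical primality of determinantal ideals of generic matrices and matching dimensions with Stage~1.
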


\noindent Matrix Schubert varieties are normal varieties, see e.g.\ \cite[Thm.~2.4.3]{factschu}.
\begin{ex}\label{basicexampletomaybedeleted}
Let $w=3412 \in S_4$. We obtain the traditional and opposite Rothe diagrams as in Figure~\ref{rotheoppositefigure}. The length of $w$ is $|D(w)| = 4 = 6 - |D\degree(w)|$.
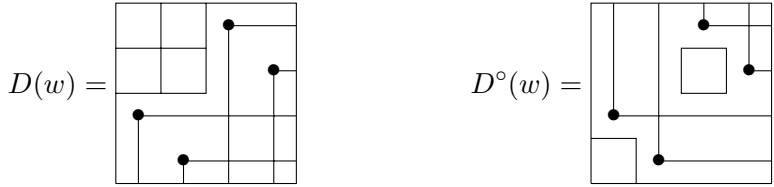
\begin{figure}[h]
	$$
	D(w) = 
	\begin{tikzpicture}[baseline=(O.base),scale=1.2]
	\node(O) at (1,1) {};
	\draw (0,0)--(2,0)--(2,2)--(0,2)--(0,0)
		(.5,1)--(.5,2)
		(0,1.5)--(1,1.5)
		(0,1)--(1,1)--(1,2); 
	\draw 
		(.25,0)--(.25,.75) node {$\bullet$}--(2,.75)
		(.75,0)--(.75,.25) node {$\bullet$}--(2,.25)
		(1.25,0)--(1.25,1.75) node {$\bullet$}--(2,1.75)
		(1.75,0)--(1.75,1.25) node {$\bullet$}--(2,1.25);
	\end{tikzpicture}
	\qquad\qquad\qquad
	D\degree(w) = 
	\begin{tikzpicture}[baseline=(O.base),scale=1.2]
	\node(O) at (1,1) {};
	\draw (0,0)--(2,0)--(2,2)--(0,2)--(0,0)
		(0,.5)--(.5,.5)--(.5,0)
		(1.5,1)--(1,1)--(1,1.5)--(1.5,1.5)--(1.5,1); 
	\draw 
		(.25,2)--(.25,.75) node {$\bullet$}--(2,.75)
		(.75,2)--(.75,.25) node {$\bullet$}--(2,.25)
		(1.25,2)--(1.25,1.75) node {$\bullet$}--(2,1.75)
		(1.75,2)--(1.75,1.25) node {$\bullet$}--(2,1.25);
	\end{tikzpicture}
	$$
\caption{The Rothe and opposite Rothe diagram of $3412$.}\label{rotheoppositefigure}
\end{figure}
Observe that $\Ess(w) = \{(4,1),(2,3)\}$ and the matrix Schubert variety $\overline{X_w}$ is defined by the inequalities $r_M(4,1) \leq r_w(4,1) = 0$ and $r_M(2,3) \leq r_w(2,3) = 2$. Its defining ideal is then $\left( z_{41},\det (M_{\square}^{2,3}) \right) = \left( z_{41},\det\begin{pmatrix}
z_{21} & z_{22} & z_{23} \\
z_{31} & z_{32} & z_{33} \\
z_{41} & z_{42} & z_{43}
\end{pmatrix} \right) \subset \CC[z_{12},\ldots,z_{44}]$.
Note that $\dim(\overline{X_{w}}) = 14$.
\end{ex}

\noindent Given $w\in S_n$ there exists an affine variety $Y_w$ such that $\overline{ X_w} =Y_w\times \CC^d$ where $d$ is as large as possible. 
Let us describe $Y_w$ and $d$ more precisely, following \cite{escobarmeszaros} with the change of conventions we explain in Remark~\ref{rem:conventions}.
\begin{defn}
The connected component of $(n,1)$ in $D \degree(w)$ is the \emph{dominant piece} of $w$ and is denoted $\dom(w)$.
Denoted by $\SW(w)$ the set consisting of the $(i,j)$ which are south-west of some entry in $\Ess(w)$. Moreover we let $L(w):=\SW(w) \setminus \dom(w)$ and $L'(w):=L(w)\setminus D\degree(w)$.
\end{defn}

We remark that since the connected components of $D\degree(w)$ are Young diagrams, then $\dom(w)$ and $\SW(w)$ also are Young diagrams. 
It follows that $L(w)$ is a skew diagram.
However, as one can observe in $L'(5	1	4	2	3)$, $L'(w)$ may not be a skew diagram.

Note that $(a,b) \in \dom(w)$ if and only if $r_w(a,b)=0$. Also, by Theorem~\ref{fultonmatrixschubert}, the ideal defining the matrix Schubert variety depends only on the submatrices which are in $\SW(w)$.  
First, consider the projection of the matrix Schubert variety $\overline{X_{w}} \subseteq \CC^{n\times n}$ onto the entries which are not is $\SW(w)$. Since these entries are free in $\overline{X_w}$, the projection isomorphic to $\mathbb{C}^{n^2-|\SW(w)|}$. 
Define $Y_{w}$ as the projection onto the entries of $L(w)$.  
We obtain that $\overline{X_{w}} = Y_{w} \times \mathbb{C}^{n^2 - |\SW(w)|}$ and 
    \begin{equation}\label{eq-dim-yw}
    \dim (Y_{w}) = n^2 - |D \degree(w)| - (n^2 - |\SW(w)|) =|\SW(w)| - |D\degree(w)| = |L'(w)|.
    \end{equation}

\begin{ex}\label{swll}
Continuing with $w=3412 \in S_4$ from Example~\ref{basicexampletomaybedeleted}, $\dom(w)=\{(4,1)\}$.
Figure~\ref{youngtableauxfig} shows $\SW(w)$, $L(w)$, and $L'(w)$.
\begin{figure}
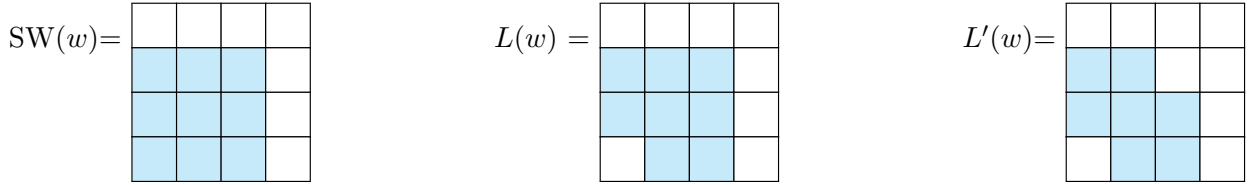

    \centering
$\SW(w)$=
\begin{ytableau}
*(white) & *(white) & *(white) &*(white)  \\
*(cyan!20)    & *(cyan!20)     &*(cyan!20)   &*(white)   \\
*(cyan!20)    & *(cyan!20)    & *(cyan!20)     &  *(white) \\
*(cyan!20)  & *(cyan!20)    & *(cyan!20)    &  *(white) 
\end{ytableau}
\qquad\qquad\qquad
$L(w)$ = \begin{ytableau}
*(white) & *(white) & *(white) &*(white)  \\
*(cyan!20)    & *(cyan!20)     &*(cyan!20)   &*(white)   \\
*(cyan!20)    & *(cyan!20)    & *(cyan!20)     &  *(white) \\
*(white)  & *(cyan!20)    & *(cyan!20)    &  *(white) 
\end{ytableau}
\qquad\qquad\qquad
$L'(w)$=
\begin{ytableau}
*(white) & *(white) & *(white) &*(white)  \\
*(cyan!20)    & *(cyan!20)     &*(white)   &*(white)   \\
*(cyan!20)    & *(cyan!20)    & *(cyan!20)     &  *(white) \\
*(white)  & *(cyan!20)    & *(cyan!20)  &  *(white) 
\end{ytableau}
\caption{$\SW(w)$, $L(w)$ and $L'(w)$ for $w=3412$.}\label{youngtableauxfig}
\end{figure}

We observe that $\overline{X_w}=Y_w\times \CC^7$, where $Y_w$ is the hypersurface defined by the ideal $$\left( \det\begin{pmatrix}
z_{21} & z_{22} & z_{23} \\
z_{31} & z_{32} & z_{33} \\
0 & z_{42} & z_{43}
\end{pmatrix} \right ) \subset \CC[z_{21},z_{22},z_{23},z_{31},z_{32},z_{33},z_{42},z_{43}].$$
 In particular, $\dim(Y_w) = 7$.
\end{ex}

\begin{rem}\label{rem:conventions}
The matrix Schubert varieties appearing in \cite{escobarmeszaros,portakal2} are defined using a $(B_{-} \times B)$-action on $\CC^{n\times n}$ where $B_{-}$ is the group of lower triangular matrices of size $n\times n$.
This changes the conventions, e.g.\ one imposes north-west rank conditions instead of south-west rank conditions.
We can translate from the conventions of this paper into theirs by reflecting the matrices horizontally, i.e.\ $w\mapsto w_0w$ where $w_0$ is the longest permutation. 
\end{rem}

\subsection{The complexity of the torus action on $Y_w$}\label{sec: complexity of the torus action on MSV}
We start by describing the torus action on $Y_w$ considered in this paper.
Note that $Y_w$ is isomorphic to the subvariety of $\overline{X_{w}}$ obtained by setting $z_{ij}=0$ for all $(i,j)\notin \SW(w)$. Clearly, the $B\times B$-action on $\overline{X_{w}}$ fixes this subvariety and, thus, induces a $B\times B$-action on $Y_w$.
In this section, we investigate the restriction to $T\times T$ of the action of $B \times B$ on $Y_{w}$, called the \emph{usual torus action}.

The $Y_w$ that are toric with respect to the usual torus action are characterized in \cite[Thm.~3.4]{escobarmeszaros}. 
In this section we further study the complexity of this action. Our main results are Theorem~\ref{nocomplexity1matrixschu}, which states that there exists no complexity-1 $T\times T$-variety $Y_w$, and Theorem~\ref{othercomplexitymsv}, which states that there exist $Y_w$ of complexity-$d$ for $d\ge2$. We use the notation ``$T\times T$-variety" in a non-traditional way to avoid any confusion over which torus action we consider on the variety $Y_w$. 

\noindent We begin by computing the weight cone of the $T\times T$-action on $Y_w$.
Note that $T\times T$ has character lattice ${\sf M}(T\times T)\cong \ZZ^n\times\ZZ^n$.
The weights of the $T\times T$-action on $\CC^{n\times n}=\Spec\CC[x_{11},\ldots,x_{nn}]$ are the $(m,\bar m)\in \ZZ^n\times\ZZ^n$ for which there exists $x_{ij}$ such that $(t,\bar t)\cdot x_{ij}=t^m\bar t^{\bar m}x_{ij}$ for all $(t,\bar t)\in T\times T$. 
Since $(t,\bar t)\cdot x_{ij}=t_i\bar t_j^{-1}x_{ij}$, the weights are $\{e_i-f_j\mid i,j\in[n]\}$, where $e_1,\ldots,e_n,f_1,\ldots,f_n$ denote the standard basis for $\ZZ^n \times \ZZ^n$. 
We conclude that the weight cone of the $T\times T$-action on $Y_w$ is
    $$\sigma_w = \cone(e_i - f_j \ | \ (i,j)\in L(w)).$$

\noindent In fact, this cone is an edge cone from \Cref{sec-toric-graph} and its generators are encoded by the edges of an acylic directed bipartite graph. The dimension of the weight cone can be calculated very easily via graphs (see \Cref{dimofcone}) and many combinatorial aspects of the edge cone are well-understood \cite{PORTAKAL2021784,Setiabrata2020FacesOR}. Hence, next we reformulate the considered torus action on~$Y_w$ in terms of graphs.

Note that we consider the $T\times T$-varieties with respect to the usual torus action defined as before. 
To ease notation, given $[n]\amalg[n]$ we write $\bar k$ for the elements of the second $[n]$.
Given $w\in S_n$, let $G^w$ be the acyclic bipartite graph with $E(G^w)=\{ (a \to \bar b) \mid (a,b)\in L(w)\}$ and  $V(G^w)\subseteq[n]\amalg [n]$ so that $G^w$ has no isolated vertices. Note that this assumption is not a strong one. Indeed, this does not change the edge cone, but only its ambient dimension.
By definition, we have that the weight cone of $Y_w$ is the dual edge cone of $G^w$, i.e.\ $\sigma_w=\sigma_{G^w}^{\vee}$. For simplicity, we refer to~$\sigma_w$ as the edge cone of $Y_w$.
It follows from our discussion that $Y_{w}$ is a $T\times T$-variety of complexity-$d$ with respect to the torus action $T\times T$ if and only if 
\begin{equation}\label{eq_complexity}
    \dim( \sigma_{w})
    =\dim(Y_w)-d
    =|L'(w)| -d,
\end{equation} 
where the second equality follows from \eqref{eq-dim-yw}.

\begin{rem}
The edge cone of the acyclic bipartite graph $G^w$ is isomorphic to the edge cone of the underlying undirected graph of $G^w$. This undirected graph is in particular used to classify rigid toric matrix Schubert varieties in \cite{portakal2}.
\end{rem}

\begin{ex}
Let us consider again the matrix Schubert variety $\overline{ X}_{3412} \cong Y_{3412} \times \CC^7$ from Example~\ref{swll}. For each box $(a,b)$ in $L(w)$, we construct an edge $ (a \to \bar b)$. The dimension of the associated edge cone $\sigma_{w}$ is 5 and $|L'(w)| = 7$. Hence $Y_{3412}$ is a $T\times T$-variety of complexity-2 with respect to the usual torus action.

\begin{center}
\ytableausetup{smalltableaux, textmode, boxsize=1.7em}

\begin{ytableau}
*(white) & *(white) & *(white) &*(white)  \\
*(cyan!20) \tiny $(2,1)$  & *(cyan!20) \tiny $(2,2)$  &*(cyan!20)\tiny $(2,3)$ &*(white)   \\
*(cyan!20) \tiny $(3,1)$  & *(cyan!20)\tiny $(3,2)$  & *(cyan!20) \tiny $(3,3)$  &  *(white) \\
*(white)   & *(cyan!20) \tiny $(4,2)$ & *(cyan!20) \tiny $(4,3)$  &  *(white) 
\end{ytableau}
\hspace{1.5cm}
\begin{tikzpicture}[baseline=60,scale=1,every path/.style={>=latex},every node/.style={draw,circle,fill=white,scale=0.7}]
  \node            (a2) at (0,1.5)  {3};
  \node            (a3) at (0,0.5)  {4};
\node   [draw=none,fill=none,scale=1.5]         (label) at (1,0)  {$G^w$};

\node       [rectangle]     (b7) at (2,0.5)  {$\bar{3}$}; 
 \node       [rectangle]     (b6) at (2,1.5)  {$\bar{2}$};
  \node            (a1) at (0,2.5) {2};
  \node       [rectangle]     (b5) at (2,2.5) {$\bar{1}$};

    \draw[->] (a1) edge (b7);
  \draw[->] (a1) edge (b6);
  \draw[->] (a2) edge (b5);
  \draw[->] (a2) edge (b6);
    \draw[->] (a2) edge (b7);
    \draw[->] (a3) edge (b7);
    \draw[->] (a3) edge (b6);
    \draw[->] (a1) edge (b5);

\end{tikzpicture}

\end{center}
\end{ex}

\noindent The $Y_w$ of complexity-$0$, i.e.\ toric varieties, have been classified in \cite{escobarmeszaros}.
A \emph{hook} with corner $(i,j)$ consists of boxes $(i',j')$ such that $j=j'$ and $i' \geq i$ or $i=i'$ and $j \geq j'$.

\begin{thm}[{\cite[Thm 3.4]{escobarmeszaros}}]\label{toricschu}
$Y_{w}$ is a toric variety with respect to the $T\times T$-action if and only if $L'(w)$ consists of disjoint hooks not sharing a row or a column.
\end{thm}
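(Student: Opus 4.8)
<br>

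The plan is to characterize exactly when the edge cone $\sigma_w = \sigma_{G^w}^\vee$ has dimension equal to $\dim(Y_w) = |L'(w)|$, since by the discussion preceding the theorem, $Y_w$ is toric (complexity-$0$) precisely when these two numbers agree. Using \Cref{dimofcone}, if $G^w$ has $n$ non-isolated vertices and $k$ connected components, then $\dim(\sigma_w) = n - k$. On the other hand, $|L'(w)|$ counts the edges of $G^w$ that actually contribute: more precisely, I would first relate $|L'(w)|$ to $|E(G^w)|$ and the geometry of $L(w)$ versus $L'(w)$. Since $G^w$ records an edge for each box of $L(w)$, the key is to compare the number of edges $q = |E(G^w)| = |L(w)|$ with the quantity $|L'(w)|$ and with $n - k$.

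The main structural fact I would establish is a combinatorial dictionary between the shape of $L'(w)$ and the cycle structure of $G^w$. A toric variety corresponds to $\sigma_w$ being a cone of the maximal possible dimension for its number of spanning rays, which for an edge cone means the underlying graph is a forest (has no cycles) after accounting for the parallel-edge phenomenon coming from boxes in $L(w) \setminus L'(w)$. Concretely, I would argue that $\dim(\sigma_w) = |L'(w)|$ if and only if the primitive cycles described in \Cref{thm-primitive} impose no relations among the distinct rays $e_a - f_b$, which forces $G^w$ to be a disjoint union of trees on its distinct edge-directions. The first step, then, is to show that $|L'(w)|$ equals the number of distinct rays $e_a - f_b$ appearing among $\{e_a - f_b : (a,b) \in L(w)\}$, i.e.\ that passing from $L(w)$ to $L'(w)$ exactly removes the redundant boxes corresponding to repeated edge directions in the bipartite graph.

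Next I would translate the forest condition back into the diagram language. A connected bipartite graph that is a tree on vertex set $V(G^w)$ contributes exactly $|V| - 1$ to both the edge count and the cone dimension; the equality $\dim(\sigma_w) = |L'(w)| = n - k$ holds if and only if every connected component of $G^w$ is a tree. I would then show geometrically that a connected component of $G^w$ is a tree exactly when the corresponding connected piece of $L'(w)$ is a single hook: a hook with corner $(i,j)$ yields a ``star'' subgraph (one vertex $i$ joined to several $\bar{j'}$, plus several $i'$ joined to $\bar j$), which is visibly a tree, whereas any box configuration containing two boxes in the same row \emph{and} two in the same column produces a $4$-cycle, hence a relation dropping the dimension. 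The ``disjoint, not sharing a row or column'' condition then guarantees the components of $G^w$ are vertex-disjoint, so the whole graph is a forest.

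The hard part will be the precise combinatorial argument that a cycle in $G^w$ exists if and only if $L'(w)$ fails to be a disjoint union of hooks sharing no rows or columns; that is, proving the converse direction carefully. A cycle in the bipartite graph corresponds to an alternating sequence of boxes in $L(w)$ sharing rows and columns in turn, and I must check that such a configuration, when no redundant boxes are present, genuinely forces $L'(w)$ to contain a non-hook shape or two hooks sharing a line. This requires understanding how the skew-diagram structure of $L(w)$ and the potentially non-skew structure of $L'(w)$ (as flagged by the example $L'(51423)$ in the excerpt) interact with the rank conditions defining $\overline{X_w}$. I expect the deletion of boxes from $L(w)$ to $L'(w)$ to be exactly what removes ``chords'' and parallel edges, so that the primitive-cycle generators of \Cref{thm-primitive} vanish precisely under the stated hook condition.
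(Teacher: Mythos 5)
First, note that the paper does not prove this statement at all: it is quoted from \cite[Thm~3.4]{escobarmeszaros} (with an alternative proof indicated in \cite[Thm~3.2]{portakal2}), so there is no in-paper proof to compare against. Judged on its own merits, your proposal starts from the right criterion --- $Y_w$ is toric iff $\dim(\sigma_w)=|L'(w)|$, with $\dim(\sigma_w)$ computed via Lemma~\ref{dimofcone} --- but the two structural claims you build on are both false. Your ``first step,'' that $|L'(w)|$ equals the number of distinct rays among $\{e_a-f_b : (a,b)\in L(w)\}$, cannot work: distinct boxes $(a,b)\in L(w)$ always give distinct vectors $e_a-f_b$, so there is no ``parallel-edge phenomenon'' and the number of distinct rays is $|L(w)|$, not $|L'(w)|$. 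The boxes deleted in passing from $L(w)$ to $L'(w)$ are those of $\oDia(w)\cap L(w)$ (where the defining equations live); they still index edges of $G^w$ and generators of $\sigma_w$.

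Consequently your central mechanism --- ``toric iff $G^w$ is a forest'' --- is also wrong, and the paper itself contains the counterexample: for $w=3142$, the variety $Y_w$ is toric with $L'(w)$ a single hook, yet $G^w\cong K_{2,2}$ contains a $4$-cycle (the graph is built from $L(w)$, which is a full $2\times 2$ block, not from $L'(w)$). A forest criterion would only be plausible for a graph on the boxes of $L'(w)$, but that is not the graph whose edge cone is $\sigma_w$. The correct bookkeeping, visible in the proof of Theorem~\ref{nocomplexity1matrixschu}, is: if $L(w)$ has connected components $L_1,\dots,L_k$ occupying $r_i$ rows and $c_i$ columns, then $\dim(\sigma_w)=\sum_i(r_i+c_i-1)$, and this must be compared with $\sum_i |L_i\cap L'(w)|$; the inequality $\dim(\sigma^\vee_{G_i})\le|L_i\cap L'(w)|$ is witnessed by the SW-border $H_i\subseteq L_i\cap L'(w)$, and equality for every $i$ is what has to be shown equivalent to the disjoint-hooks condition. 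Your proposal never engages with this comparison, so the main combinatorial content of the theorem is missing.
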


\noindent Some simple arguments used in the alternative proof of this result in \cite[Thm 3.2]{portakal2} motivated us to work further with graphs to investigate the varieties $Y_w$ of larger complexity.

\begin{ex}
Let $w = 3142 \in S_4$. The \Cref{fig_toric_ex} illustrates $\oDia(w)$, $L(w)$, $L'(w)$, and $G^w$. The dimension of the associated bipartite graph and $|L'(w)|$ is three. Also, as seen in this figure, $L'(w)$ has a hook shape. Thus, $Y_w$ is a toric variety with respect to the effective torus action of $T = (\CC^*)^3$.

\begin{figure}[h]
\ytableausetup{smalltableaux, mathmode, boxsize=1.5em}
\begin{ytableau}
*(white)  & *(white)  &*(white)  &*(white)  \\
*(white)  & *(cyan!20)  &*(white) &*(white)   \\
*(white)  & *(white)  & *(white)   &  *(white) \\
*(cyan!20)   & *(cyan!20)   & *(white)   &  *(white) 
\end{ytableau}
\hspace{0.5cm}
\begin{ytableau}
*(white)  & *(white)  &*(white)  &*(white)  \\
*(cyan!20)  & *(cyan!20)  &*(white) &*(white)   \\
*(cyan!20)  & *(cyan!20)  & *(white)   &  *(white) \\
*(white)  & *(white)  & *(white)   &  *(white) 
\end{ytableau}
\hspace{0.5cm}
\begin{ytableau}
*(white)  & *(white)  &*(white)  &*(white)  \\
*(cyan!20)  & *(white)  &*(white) &*(white)   \\
*(cyan!20)  & *(cyan!20)  & *(white)   &  *(white) \\
*(white)   & *(white)  & *(white)   &  *(white) 
\end{ytableau}
\hspace{0.5cm}
\begin{tikzpicture}[baseline=2.1cm,scale=1.2,every path/.style={>=latex},every node/.style={draw,circle,fill=white,scale=0.7}]
  \node            (a) at (0,0.5)  {3};
  \node       [rectangle]     (b) at (1.5,0.5)  {$\bar{2}$};
  \node            (c) at (0,2) {2};
  \node         [rectangle]   (d) at (1.5,2) {$\bar{1}$};

    \draw[->] (c) edge (b);
    \draw[<-] (b) edge (a);
    \draw[->] (a) edge (d);
  \draw[->] (c) edge (d);
\end{tikzpicture}
\caption{$\oDia(w)$, $L(w)$, $L'(w)$, and $G^w$ for $w = 3142$.}\label{fig_toric_ex}
\end{figure}
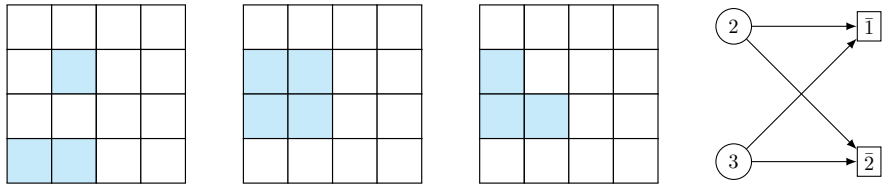
\end{ex}

The following natural question is to study complexity-1 $T\times T$-varieties $Y_{w}$, however the next Theorem shows that there are none.
\begin{thm}\label{nocomplexity1matrixschu}
There are no complexity-1 $T\times T$-varieties $Y_w$.
\end{thm}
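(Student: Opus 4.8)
The plan is to convert the complexity into a single graph invariant and then rule out the value $1$ combinatorially. Recall from the discussion preceding the statement that the complexity of $Y_w$ is $d=\dim(Y_w)-\dim(\sigma_w)=|L'(w)|-\dim(\sigma_w)$, and that by \Cref{dimofcone} one has $\dim(\sigma_w)=|V(G^w)|-k$, where $k$ is the number of connected components of $G^w$. First I would introduce the bipartite graph $H_w$ whose edges are $\{(a\to\bar b)\mid (a,b)\in L'(w)\}$, so that $|E(H_w)|=|L'(w)|=\dim(Y_w)$. The first sublemma is that the weight $e_a-f_b$ of every box $(a,b)\in L(w)\cap\oDia(w)$ already lies in the linear span of the weights of $L'(w)$; equivalently, the extra edges of $G^w$ join vertices that are already present and already connected in $H_w$. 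Granting this, $\dim(\sigma_w)$ equals the rank of the $L'(w)$-weights, namely $|V(H_w)|-k(H_w)$, and therefore
\[
 d=|E(H_w)|-|V(H_w)|+k(H_w)=b_1(H_w),
\]
the first Betti number (cycle rank) of $H_w$.

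Next I would reduce to a statement about a single connected component. Since $b_1$ is additive over connected components and each summand is nonnegative, the equality $d=1$ would force exactly one component of $H_w$ to have cycle rank $1$ and all the others to be trees. Hence it suffices to prove that no connected component of $H_w$ is unicyclic, i.e.\ that whenever a component contains a cycle it contains at least two independent cycles.

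The crux, and the step I expect to be the main obstacle, is precisely this last claim. A cycle in the bipartite graph $H_w$ is an alternating sequence of rows and columns, the shortest being a $2\times 2$ rectangle of boxes $\{(a,b),(a,b'),(a',b),(a',b')\}\subseteq L'(w)$. The plan is to exploit the defining structure $L'(w)=(\SW(w)\setminus\dom(w))\setminus\oDia(w)$: because $\SW(w)$ is a Young diagram it is closed to the north-west, so the four corner boxes force the entire rectangle, together with its north-west neighbours, to lie in $\SW(w)$. I would then determine, using the rank description of $\oDia(w)$ from \Cref{def_opposite_Rothe} and the characterization $(a,b)\in\dom(w)\iff r_w(a,b)=0$, exactly which of these interior boxes can be deleted. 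The goal is to show that deletion can never leave an isolated $2\times2$ (or any unicyclic) configuration: a surviving rectangle either extends along a row or column to a $2\times3$ or $3\times2$ block, or is linked through $\SW(w)$ to a further box, and in each case one obtains a second independent cycle, so $b_1$ of that component is $\ge 2$. Phrased through the defining equations, this says that the determinantal rank conditions cutting out $Y_w$ never isolate a single $2\times2$-minor relation.

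Finally, the complementary statement that every $d\ge 2$ is attained is handled separately in Theorem~\ref{othercomplexitymsv}; for the present theorem one needs only the exclusion of $d=1$, i.e.\ $b_1(H_w)\neq 1$. I expect the two sublemmas—the passage from $G^w$ to $H_w$ preserving the rank of the weights, and the non-existence of a unicyclic component—to carry essentially all of the work, with the Young-diagram bookkeeping around $\oDia(w)$ and $\dom(w)$ being the delicate part.
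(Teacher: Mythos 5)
Your reduction of the complexity to the first Betti number $b_1(H_w)=|E(H_w)|-|V(H_w)|+k(H_w)$ is sound and in fact mirrors what the paper does: the paper's quantity $H_i$ (the SW-border of a component $L_i$ of $L(w)$) is a spanning tree of the component $G_i$ contained in $L'(w)$, which is exactly your first sublemma, and the paper's case ``$\dim(\sigma_{G_j}^{\vee})=|L_j\cap L'(w)|-1$, i.e.\ $L_j\cap L'(w)=H_j\cup\{(a,b)\}$'' is precisely your ``unicyclic component.'' So up to and including the reduction to ``no component of $H_w$ is unicyclic,'' your plan and the paper's proof coincide. Note that your first sublemma is itself not free: one must show that the border boxes of $L_i$ avoid $\oDia(w)$ (the paper does this using that $\dom(w)$ is a Young diagram and that the only boxes of $\oDia(w)$ in column $1$ or row $n$ lie in $\dom(w)$), but this is short.

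The genuine gap is the final step, which is where essentially all of the content of the theorem lives. Your sketch treats the unicyclic case as ``an isolated $2\times 2$ rectangle of boxes of $L'(w)$'' and proposes to show that such a rectangle always ``extends to a $2\times 3$ block or is linked through $\SW(w)$ to a further box.'' This mischaracterizes the configuration: in the unicyclic case the component is the full border staircase $H_j$ plus exactly one additional box $(a,b)\in L'(w)$, and the unique cycle this creates can be long; moreover, extending inside $\SW(w)$ is useless unless the additional boxes survive the deletion of $\oDia(w)$, which is exactly the point at issue. What is actually required — and what the paper spends roughly a page on — is a case analysis on whether the neighbours $(a,b-1)$ and $(a+1,b)$ of the extra box lie on the border $H_j$, repeatedly translating membership in $\oDia(w)$, $\dom(w)$ and $H_j$ into statements about the positions of the $1$s of the permutation matrix (e.g.\ deducing $w(b-1)=a+1$, or that a $1$ in column $b$ would have to sit in row $\le a-1$) until a contradiction with $(a,b+1)\in\oDia(w)$ or with $(a,b)\notin\oDia(w)$ is reached. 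None of that analysis is present in, or clearly implied by, your sketch, so as written the proposal does not establish the theorem; it reproves the easy half and defers the hard half to a heuristic.
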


We now introduce some notation that will be used in the proof of this theorem.
Suppose that $L(w)$ has connected components $L_1,\ldots, L_k$.
Fix $i$ and let $H_i$ be the set of SW-border of $L_i$, i.e.
\begin{equation}\label{eq_Hi}
     H_i=\{(p,q)\in L_i \mid (p,q-1)\notin L_i,\ (p+1,q)\notin L_i,\ \text{or } (p+1,q-1)\notin L_i\}.
\end{equation}
\Cref{fig_Hi} gives an example of this set.

\begin{figure}[h]
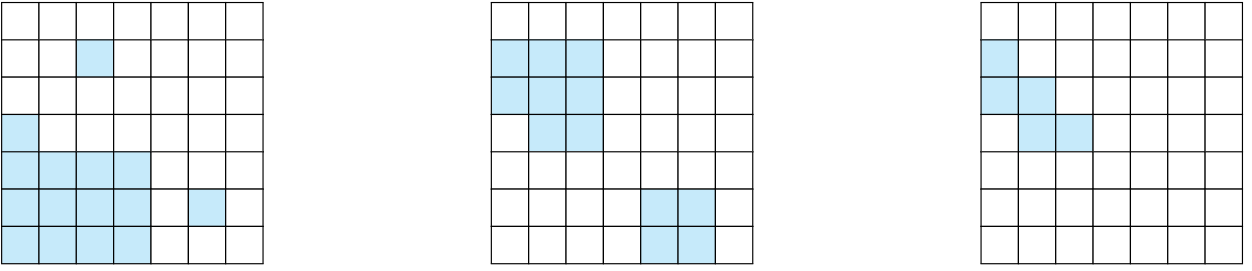

    \centering
    \ytableausetup{smalltableaux, mathmode, boxsize=1.25em}
\begin{ytableau}
*(white) & *(white) & *(white) &*(white) &*(white)&*(white)&*(white) \\
*(white)  & *(white)   &*(cyan!20) &*(white) &*(white)&*(white)&*(white)  \\
*(white) & *(white) & *(white) &*(white) &*(white)&*(white)&*(white) \\
*(cyan!20)    & *(white) & *(white)  &  *(white) &*(white)&*(white)&*(white) \\
*(cyan!20)    & *(cyan!20)    & *(cyan!20)    &*(cyan!20)    &*(white)&*(white)&*(white) \\
*(cyan!20)    & *(cyan!20)    & *(cyan!20)    &*(cyan!20)    &*(white)& *(cyan!20)&*(white) \\
*(cyan!20)    & *(cyan!20)    & *(cyan!20)    &*(cyan!20)    &*(white)&*(white)&*(white) 
\end{ytableau}
\hfill
\begin{ytableau}
*(white) & *(white) & *(white) &*(white) &*(white)&*(white)&*(white) \\
*(cyan!20)    & *(cyan!20)     &*(cyan!20)   &*(white) &*(white)&*(white)&*(white)  \\
*(cyan!20)    & *(cyan!20)    & *(cyan!20)     &  *(white) &*(white)&*(white)&*(white)\\
*(white)  & *(cyan!20)    & *(cyan!20)    &  *(white) &*(white)&*(white)&*(white) \\
*(white) & *(white) & *(white) &*(white) &*(white)&*(white)&*(white) \\
*(white) & *(white) & *(white) &*(white) &*(cyan!20)   &*(cyan!20)   &*(white) \\
*(white) & *(white) & *(white) &*(white) &*(cyan!20)   &*(cyan!20)   &*(white) 
\end{ytableau}
\hfill 
\begin{ytableau}
*(white) & *(white) & *(white) &*(white) &*(white)&*(white)&*(white) \\
*(cyan!20)    & *(white)   &*(white)  &*(white) &*(white)&*(white)&*(white)  \\
*(cyan!20)    & *(cyan!20)    & *(white)   &  *(white) &*(white)&*(white)&*(white)\\
*(white)  & *(cyan!20) & *(cyan!20)    &  *(white) &*(white)&*(white)&*(white) \\
*(white) & *(white) & *(white) &*(white) &*(white)&*(white)&*(white) \\
*(white) & *(white) & *(white) &*(white) &*(white) &*(white) &*(white) \\
*(white) & *(white) & *(white) &*(white) &*(white) &*(white) &*(white) 
\end{ytableau}
\caption{Let $w=3412756$. The leftmost figure is $\oDia(w)$ and the middle one is $L(w)$. Let $L_1$ be the northwesternmost connected component $L(w)$, then $H_1$ is depicted on the right.}\label{fig_Hi}
\end{figure}

\begin{proof}[Proof of \Cref{nocomplexity1matrixschu}]
Suppose that $L(w)$ has connected components $L_1,\ldots, L_k$.
Since $L(w)$ is a skew diagram, each $L_i$ corresponds to a connected component $G_i$ of $G^w$ and therefore $\sigma^w=\sigma_{G_1}^{\vee}\times\cdots\times\sigma_{G_k}^{\vee}$.
We first show that for all $i$, $\dim(\sigma_{G_i}^{\vee}) \le |L_i\cap L'(w)| $.

Fix $i$ and let $H_i$ be as in \eqref{eq_Hi};
we claim that $H_i\cap  \oDia(w)=\varnothing$.
Suppose, by contradiction, that $(p,q)\in H_i\cap  \oDia(w)$.
Then, by the definition of $H_i$ either $p = n$, $q = 1$, or one of $(p, q - 1), (p + 1, q), (p + 1, q - 1)$ is in $\dom(w)$.
 Note that the $p = n$ and $q = 1$ cases can't occur since $(p,q)\in\oDia(w)$ with either $p = n$ or $q = 1$ must also be in $\dom(w)$. 
If at least one of $(p,q-1)$, $(p+1,q)$, or $(p+1,q-1)$ is in $\dom(w)$, then $(p,q)\in\dom(w)$ which is also not possible.
It follows that $H_i\cap  \oDia(w)=\varnothing$, which implies that $H_i\subseteq L_i\cap L'(w)$.

Since, the connected components of $L(w)$ are skew diagrams, the vertices of $G_i$ are the rows and columns containing boxes in $H_i$.
We conclude that  $G_i$ has at most $|H_i|+1$ vertices and, by Lemma~\ref{dimofcone},
    \begin{equation}\label{eq-no-complexity-one-ineq}
    \dim(\sigma^\vee_{G_i})=|H_i|\le |L_i\cap L'(w)|.
    \end{equation}
    
With this equation at hand we proceed to prove that there is no  $Y_w$ of complexity-1.
Assume, by contradiction, that $w$ is such that $\dim(\sigma_w) = |L'(w)| - 1$.
Note that $\dim(\sigma_w)=\dim(\sigma_{G_1}^{\vee})+\cdots+\dim(\sigma_{G_k}^{\vee})$ and $|L'(w)|=|L_1\cap L'(w)|+\cdots+|L_k\cap L'(w)|$.
By \eqref{eq-no-complexity-one-ineq} it follows that there exists $j$ such that $\dim(\sigma_{G_j}^{\vee})=|H_j| = |L_j\cap L'(w)| - 1$ and $\dim(\sigma_{G_i}^{\vee}) = |L_i\cap L'(w)| $ for $i\neq j$.
In particular, we have that 
  \begin{equation}\label{eq-no-complexity-one-connected}
  L_j\cap L'(w)=H_j\cup \{(a,b)\}.
  \end{equation}
Since $(a,b)\in \SW(w)$ there exist $(p,q)\in\oDia(w)$ such that $p\le a$ and $q\ge b$ and, using $(a,b)\notin \oDia(w)$, it follows that either $(a,b+1)\in \SW(w)$ or $(a-1,b)\in \SW(w)$.

Throughout this proof we assume, without loss of generality due to symmetry, that $(a,b+1)\in \SW(w)$.
Suppose $(a, b + 1) \notin \oDia(w)$. Then $(a,b+1) \in L_j\cap L'(w)$ and, by \eqref{eq-no-complexity-one-connected}, $(a,b+1) \in H_j$. 
However, this is not possible since $L_j$ is a skew diagram. 
It follows that $ (a,b+1)\in \oDia(w)$, i.e.
    \begin{equation}\label{eq-no-complexity-one-wlog}
   w(b+1)<a\quad \text{and} \quad w^{-1}(a)>b+1.
    \end{equation}
  
We finish this proof by considering the four possible ways in which $H_j$ and $\{(a,b-1),(a+1,b)\}$ can intersect.

The first case is when $(a,b-1),(a+1,b)\notin H_j$.
From the definition of $\oDia(w)$ we can see that 
    $$
    (a,b-1),(a+1,b)\in \oDia(w),\
    (a,b)\notin \oDia(w)\
    \Rightarrow\
    w(b)=a.
    $$
Since this contradicts \eqref{eq-no-complexity-one-wlog}, we see then that this case is not possible.

The second case is when $(a,b-1),(a+1,b)\in H_j$. 
Since $(a,b-1)\notin \oDia(w)$, by \eqref{eq-no-complexity-one-wlog} it follows that $w(b-1)> a$.
Similarly, $(a,b)\notin \oDia(w)$ and \eqref{eq-no-complexity-one-wlog} imply that $w(b)>a$.
We then have that $a\le n-2$.
Let us show that $(a+2,b-2)\in\dom(w)$.
Since $H_j$ is connected and $(a,b)\notin H_j$, then $(a+1,b-1)\in H_j$. Now, $H_j$ borders part of $\dom(w)$, which is a Young diagram, so $(a+1,b-1),(a+1,b)\in H_j$ imply that $(a+2,b-2)\in\dom(w)$ and, by definition of $\oDia(w)$, that $w(b-1)<a+2$.
We have that $w(b-1)> a$ since \eqref{eq-no-complexity-one-wlog} implies that $w(b-1)\neq a$ and if $w(b-1)<a$ then $(a,b-1)\in\oDia(w)$.
It follows that $w(b-1)=a+1$.

Notice that $(a+2,b-2)\in\dom(w)$ also implies that either $(a+2,b)\in\dom(w)$ or $(a+2,b)\in H_j$.
If $(a+2,b)\in\dom(w)$ then the $1$ in column $b$ of the permutation matrix of $w$ must lie in row $x\le a+1$. 
Since $w(b-1)=a+1$ and by \eqref{eq-no-complexity-one-wlog} this $1$ must be in row $x\le a-1$. 
However, this contradicts that $(a,b)\notin \oDia(w)$ so $(a+2,b)\in \dom(w)$ is not possible.
On the other hand, suppose that $(a+2,b)\in H_j$.
Since $(a,b+1)\in\oDia(w)$, then $(a+1,b+1)\in L_j$.
Furthermore, by \eqref{eq-no-complexity-one-connected}, $(a+1,b+1)\notin L'(w)$ so in fact $(a+1,b+1)\in\oDia(w)$.
However, this would imply that $w^{-1}(a+1)>b+1$ contradicting that $w(b-1)=a+1$.
We conclude that this case is also impossible.

The next case is when $(a,b-1)\in H_j$ and $(a+1,b)\notin H_j$. 
It follows that $(a+1,b)\in\oDia(w)$ and thus $w(b)<a+1$.
However, since $(a,b+1)\in\oDia(w)$ we would have that $(a,b)\in\oDia(w)$, a contradiction.

The last case is when $(a,b-1)\notin H_j$ and $(a+1,b)\in H_j$. 
Since $(a,b)$ is neither in $\oDia(w)$ not in the SW-border of $L_j$, then $(a+1,b-1)\in H_j$.
Now, $w(b)>a+1$ since $w(b)\neq a$, by \eqref{eq-no-complexity-one-wlog}, $w(b)=a+1$ would imply that $(a+1,b-1)\in\oDia(w)$, and $w(b)<a$ would imply that $(a,b)\in\oDia(w)$.
In particular, $a+1<n$.
Since $(a+1,b-1)\in H_j$, we have that $(a,b-1)\notin \dom(w)$, so $(a,b-1)\in L_j$.
By \eqref{eq-no-complexity-one-connected} it follows that $(a,b-1)\in \oDia(w)$. 
Note that
    $$
    (a,b-1)\in \oDia(w),\ (a+1,b-1)\notin \oDia(w)\ \Rightarrow\ (a+1,b+1)\notin \oDia(w)
    $$
and, again by \eqref{eq-no-complexity-one-connected}, we have that $(a+1,b+1)\in H_j$.
Combining this with $a+1<n$ we deduce that $(a+2,b)\in \oDia(w)$.
However, this means that $w(b)<a+2$, contradicting $w(b)>a+1$.

Since none of the cases are possible, it follows that there is no  $Y_w$ of complexity-1.
\end{proof}

\begin{thm}\label{othercomplexitymsv}
There exist $T\times T$-varieties $Y_w$ of complexity-$d$ for $d\geq 2$.
\end{thm}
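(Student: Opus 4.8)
The plan is to prove existence by explicit construction: for each $d \ge 2$ I want to exhibit a permutation $w$ whose associated variety $Y_w$ has complexity exactly $d$. By the discussion preceding the theorem, this amounts to finding $w$ with
\begin{equation*}
|L'(w)| - \dim(\sigma_w) = d,
\end{equation*}
and since $\dim(\sigma_w) = \dim(\sigma^\vee_{G^w})$ is computable by \Cref{dimofcone} as (number of non-isolated vertices) minus (number of connected components), the whole problem reduces to combinatorics of the skew diagram $L(w)$ and its bipartite graph $G^w$.

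The cleanest approach is to first settle the base case $d = 2$ and then bootstrap. For $d = 2$ I would reuse the worked example $w = 3412$, which the excerpt already shows has $|L'(w)| = 7$ and $\dim(\sigma_w) = 5$, hence complexity exactly $2$. For general $d$, the idea is to build a single connected component of $L(w)$ on which the defect $|L_i \cap L'(w)| - \dim(\sigma^\vee_{G_i})$ equals $d$, or else to take a suitable $w$ whose diagram is a full $m \times m$ square-type dominant region. The key numerical handle is that for a connected component $L_i$, \Cref{dimofcone} gives $\dim(\sigma^\vee_{G_i}) = (\text{rows used}) + (\text{columns used}) - 1$, so a "fat" rectangular block of boxes forces the defect to grow quadratically in the side length while the vertex count only grows linearly. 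Concretely, if $L'(w)$ contains a solid $p \times q$ rectangle all in one connected component, that block contributes $pq$ to $|L'(w)|$ but only $p + q - 1$ to the dimension, giving a defect of $pq - p - q + 1 = (p-1)(q-1)$ from that block alone. Choosing $p, q$ so that $(p-1)(q-1) = d$ (e.g.\ $p = 2$, $q = d+1$, or $p = q$ when $d$ is a perfect square) then realizes every value of $d \ge 1$ as a block defect.

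The main obstacle is not the arithmetic but the \emph{realizability}: I must produce an honest permutation $w \in S_n$ whose opposite Rothe diagram $\oDia(w)$, and hence whose $L'(w)$, actually contains such a solid rectangle as (part of) a connected component, and verify that no other components contribute extra defect in an uncontrolled way. This requires reverse-engineering $w$ from a desired diagram shape. The natural candidates are the dominant permutations, where $\oDia(w)$ is a single Young diagram and $L'(w)$ is easy to read off; a Grassmannian or dominant $w$ with a rectangular essential rank condition should produce exactly one connected component shaped like a fat skew strip. I would pick $w$ so that $\Ess(w)$ is a single box forcing a rank condition on a large submatrix, making $L(w)$ a single connected skew region containing an $(p \times q)$ solid block, and then compute $|L'(w)|$ and the graph dimension directly to confirm the defect is exactly $d$.

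Thus the proof structure is: (1) recall the complexity formula $d = |L'(w)| - \dim(\sigma_w)$ and the vertex-count formula from \Cref{dimofcone}; (2) show that a connected component containing a solid $p \times q$ block contributes defect $(p-1)(q-1)$; (3) for each $d \ge 2$ choose $(p,q)$ with $(p-1)(q-1) = d$ (possible since $d \ge 2$ admits such a factorization, e.g.\ $p=2,\ q=d+1$); (4) construct an explicit $w$—a dominant or Grassmannian permutation—whose $L'(w)$ is a single connected component equal to such a block, and verify by direct computation of $\oDia(w)$ and $G^w$ that the complexity equals $d$. The delicate verification step (4) is where I expect to spend the most care, since one must confirm both that the block lies outside $\dom(w)$ (so it survives into $L'(w)$) and that the bipartite graph $G^w$ is connected with exactly $p + q$ vertices.
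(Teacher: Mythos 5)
Your overall strategy---realize each $d\ge 2$ as the ``defect'' $|L'(w)|-\dim(\sigma_w)$ of a single, explicitly constructed permutation whose diagram contains a fat rectangular block---is genuinely different from the paper's argument, which instead proves \emph{additivity}: for $\alpha\in S_A$, $\beta\in S_B$ the block-direct-sum permutation $\gamma=\alpha_1\cdots\alpha_A\,(\beta_1+A)\cdots(\beta_B+A)$ satisfies $G^\gamma=G^\alpha\sqcup G^\beta$, so complexities add, and then combines the two base cases $Y_{3412}$ (complexity $2$) and $Y_{4312}$ (complexity $3$) with the fact that the numerical semigroup generated by $2$ and $3$ contains every $d\ge 2$. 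The paper's route avoids all per-$d$ realizability questions at the cost of only two explicit diagram computations.

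As written, your proposal has a genuine gap in steps (2) and (4). The count ``a solid $p\times q$ block contributes $pq$ to $|L'(w)|$ but only $p+q-1$ to the dimension, hence defect $(p-1)(q-1)$'' cannot be realized by an actual connected component of $L(w)$: by construction $\SW(w)$ is generated by the essential boxes, so every connected component of $L(w)$ contains boxes of $\Ess(w)\subseteq\oDia(w)$, and these are removed when passing to $L'(w)=L(w)\setminus\oDia(w)$; boxes of $\dom(w)$ are removed even earlier. Concretely, for $w=3412$ the relevant $3\times 3$ region would predict defect $(3-1)(3-1)=4$, but the true defect is $2$ because $(4,1)\in\dom(w)$ and $(2,3)\in\oDia(w)$; for $w=4312$ the full $3\times 3$ square predicts $4$ but the defect is $3$. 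So your formula systematically overcounts, and the correction term (the number of $\oDia(w)$-boxes in the component, plus the effect of $\dom(w)$ on both $|L(w)|$ and the vertex count) depends on $w$ in a way you have not controlled. On top of this, step (4)---actually exhibiting, for each $d\ge 2$, a permutation whose $L'(w)$ and $G^w$ have the required shape---is explicitly deferred and is the entire content of the theorem. Until you either carry out that reverse-engineering with the corrected defect count, or replace it by a product/additivity argument as in the paper, the proof is incomplete.
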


\begin{proof}
Let $Y_{\alpha}$ be a complexity-$i$ $T\times T$-variety and $Y_{\beta}$ be a complexity-$j$ $T\times T$-variety, for $\alpha \in S_A$ and $\beta \in S_B$. Consider the opposite Rothe diagram of the permutation $\gamma = \alpha_1 \ldots \alpha_A \ \beta_1 + A  \ldots  \beta_{B}+A \in S_{A+B}$:
\[
\begin{tikzpicture}[scale=0.08cm]
\draw[step=0.5cm,color=black] (0,0) grid (1,1);

\node at (+0.25,+0.75) { $D\degree(\alpha)$};
\node at (+0.75,+0.75) {$-$};
\node at (+0.25,+0.25) {$*$   };
\node at (+0.75,+0.25) {$D\degree(\beta)$ };

\end{tikzpicture}
\]
\noindent By the construction the area labelled with $*$ is contained in $\dom(\gamma)$ since there exist no $1$s of the permutation matrix of $\gamma$ in this area. Moreover, $D\degree(w)$ has no boxes in the area labelled with $-$. Since the area $*$ is contained in $\dom(\gamma)$ we obtain the bipartite graph $G^{\gamma}$ is the disjoint union of $G^{\alpha}$ and $G^{\beta}$. Hence we conclude that $Y_{w}$ is a complexity-$(i+j)$ $T\times T$-variety, since $\dim(\sigma_{\gamma} )= \dim(\sigma_{\alpha})+\dim( \sigma_{\beta})$.
Finally, since $Y_{3412}$ has complexity-2, $Y_{4312}$ has complexity-3, and every $d\ge 2$ is in the affine semigroup spanned by $2,3$ the statement follows. 
\end{proof}

The following is a natural question:
\begin{qu}
Given $d\ge 2$, classify the $Y_w$ of complexity-$d$.
\end{qu}
One can observe in $w = 3412$, $w = 42513$ or $w=41523$ that the shapes of the (opposite) Rothe diagrams of $Y_w$ of complexity-$2$ will be more complex than for complexity-$0$.

In this section, we observed that matrix Schubert varieties are not sources of complexity-1 $T\times T$-varieties. Thus, we steer our investigation in the direction of Kazhdan--Lusztig varieties which are generalizations of matrix Schubert varieties and provide many interesting examples of $T$-varieties.

\section{Torus action on Kazhdan--Lusztig varieties}\label{sec: klvarieties}

We now investigate the usual torus action on Kazhdan--Lusztig varieties~$\KL_{v,w}$. Our aim is to study the complexity of this action. To this end we give a combinatorial description of its set of weights, and also of the set of extremal rays of the associated weight cone~$\sigma_{v,w}$. We also discuss the relation between $\sigma_{v,w}$ and the cone of weights of the usual torus action on affine neighborhoods of torus fixed points in Schubert variety $X_w$, i.e.\ $v\Omega_{\id}^\circ \cap X_w$, studied in~\cite{leemasuda}. Finally, in Section~\ref{interactingwithsimpledirectedgraphs} we interpret our results in terms of directed graphs.

\subsection{Background on Kazhdan--Lusztig varieties}

Let $G=GL_n(\CC)$, $B$ be the Borel subgroup of upper triangular matrices, $T \subset B$ the maximal torus of diagonal matrices, and $B_-$ the corresponding opposite Borel subgroup of lower triangular matrices. The complete flag variety is $G/B$ and $G$ acts on $G/B$ by left multiplication.
The fixed points of $G/B$ under the left action of $T$ are $wB$ for $w \in S_n$. We have the cell decomposition
    $$G/B = \bigsqcup_{w\in S_n} BwB/B,$$
known as the Bruhat decomposition. 
The closure of the $B$-orbit $BwB/B$ is the \emph{Schubert variety} $X_w\subseteq G/B$, which is a subvariety of dimension $\ell(w)$.
The \emph{opposite Schubert cell} $\Omega_v^\circ$ is the $B_-$-orbit $B_-vB/B$.

\begin{defn}\label{def_KL}
The \emph{Kazhdan--Lusztig variety (KL~variety)} corresponding to $v,w\in S_n$ is
$$\KL_{v,w} := X_w \cap \Omega_v^\circ.$$
\end{defn}
We remark that
that \begin{equation}\label{dimKL}
    \dim (\KL_{v,w}) = \ell(w) - \ell(v),
\end{equation} 
see e.g.~\cite[Cor 3.3]{WY08}.

The nonempty KL varieties are characterized by the Bruhat order.
\begin{defn}\label{def_Bruhat_mat}
The \emph{Bruhat order} is the partial order on $S_n$ defined by $v \leq w$ if $\rank_v(a,b) \leq \rank_w(a,b)$ for all $a,b \in [n]$.
\end{defn}
By~\cite[Lem.~3.10]{fulton}, $\KL_{v,w}$ is non-empty if and only if $v \leq w$. 

A very useful point of view on a KL variety is given by the isomorphism \cite[Lem.~A.4]{kazhdanlusztig}:
\begin{equation}\label{klisomorp}
X_w \cap v \Omega_{\id}^\circ \cong \CC^{\ell(v)} \times \KL_{v,w}.
\end{equation}
The left side of this equation is an affine neighborhood of the $T$-fixed point $vB/B$ in $X_w$.
Therefore, \eqref{klisomorp} tells us that the ideals defining KL~varieties are the ideals defining affine neighborhoods of the $T$-fixed points of Schubert varieties.
As a consequence, we have that KL~varieties are normal.
Indeed, since Schubert varieties are normal, see e.g.~\cite{DCL,RR}, and normality is a local property, it follows that KL~varieties are normal.
Moreover, since KL~varieties are $B$-invariant and $T\subset B$ then they are also $T$-invariant. 
We then have  that KL~varieties are $T$-varieties.\\

Following \cite{WY08,wooyong} we give an explicit description of the coordinate ring $\CC[\KL_{v,w}]$. 
We remark that our conventions for labeling the variables differ from those by Woo-Yong.

\begin{defn}\label{def_omega0v}
Given $v\in S_n$ let $\Sigma_v\subset \CC^{n\times n}$ consist of the matrices $Z$ such that
 \begin{equation}
    \begin{cases}
      Z_{v(i), i} = 1 & \text{for all }i \in [n],  \\
      Z_{v(i), a} = 0  & \text{for } a > i, \\
      Z_{b, i} = 0  & \text{for } b<v(i).
    \end{cases}
  \end{equation}
\end{defn}
\noindent The space $\Sigma_v$ can be realized as follows.
First, write down the 1s of the permutation matrix of $v \in S_n$.
Next set all the entries that are either north or east of a 1 to zero. 
The remaining entries are free. 
Since the free entries are precisely the entries of the opposite Rothe diagram $\oDia(v)$, we have that $\Sigma_v\cong \CC^{|\oDia(v)|}$ and $\CC[\Sigma_v]=\CC[z_{ij}\mid (i,j)\in\oDia(v)]$.
We will denote a generic element of~$\Sigma_v$ by $Z^{(v)}$, i.e.\ a matrix with~0s and~1s at the entries listed above and unknowns $z_{ij}$ at the remaining entries.

\begin{prop}[{\cite[Section 10.2]{fulton-yt}}]
The map $\pi: G \to G/B$ sending a matrix $Z$ to its coset $ZB$ induces a (scheme-theoretic) isomorphism between $\Sigma_v$ and the opposite Schubert cell $\Omega^\circ_v$.
\end{prop}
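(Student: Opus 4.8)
The plan is to exhibit $\pi|_{\Sigma_v}$ as a bijection onto $\Omega_v^\circ$ whose inverse is regular. Two matrices give the same point of $G/B$ precisely when they differ by right multiplication by an element of $B$, and right multiplication by $b\in B$ performs column operations: the $j$-th column is rescaled by $b_{jj}\neq 0$ and has added to it a linear combination of columns $1,\dots,j-1$. Since $\pi$ is the restriction of the quotient morphism $G\to G/B$, it is a morphism, so it remains to check that it is injective on $\Sigma_v$, that its image is $\Omega_v^\circ$, and that the set-theoretic inverse is given by regular functions.

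For injectivity I would exploit the echelon structure of $\Sigma_v$: the entries $(v(i),i)$ are pivots equal to $1$, with all entries of column $i$ above row $v(i)$ vanishing and all entries of row $v(i)$ to the right of column $i$ vanishing. Suppose $Z'=Zb\in\Sigma_v$ with $Z\in\Sigma_v$ and $b\in B$. Processing columns from left to right, the constraints $Z'_{v(i),j}=0$ for $i<j$, together with $Z_{v(i),k}=0$ for $k>i$, yield for each fixed $j$ a unipotent triangular linear system in the unknowns $b_{1j},\dots,b_{j-1,j}$ whose only solution is $b_{kj}=0$; matching the pivot $Z'_{v(j),j}=1$ then forces $b_{jj}=1$. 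Hence $b=I$ and $\pi|_{\Sigma_v}$ is injective.

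To identify the image with $\Omega_v^\circ$ I would pass to a group-theoretic description. Writing $U_-\subset B_-$ for the unipotent radical and using $B_-=TU_-$ together with $TvB=vB$, one has $\Omega_v^\circ=B_-vB/B=U_-vB/B$. Set $U_-^v:=U_-\cap vB_-v^{-1}$. The substitution $u=Zv^{-1}$, under which $u_{r,s}=Z_{r,v^{-1}(s)}$, translates the three defining conditions of $\Sigma_v$ exactly into the statement that $u$ is lower unipotent with $v^{-1}uv$ lower triangular; that is, $\Sigma_v=U_-^v\cdot v$ and $Z\mapsto Zv^{-1}$ is an isomorphism $\Sigma_v\xrightarrow{\sim}U_-^v$. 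Because each strictly lower-triangular position belongs to exactly one of $vBv^{-1}$ or $vB_-v^{-1}$, the multiplication map gives a factorization $U_-=U_-^v\cdot(U_-\cap vBv^{-1})$ with $U_-\cap vBv^{-1}=\Stab_{U_-}(vB)$. Consequently $\Omega_v^\circ=U_-^v\,vB/B$ and $u\mapsto uvB$ is an isomorphism $U_-^v\xrightarrow{\sim}\Omega_v^\circ$; composing with $\Sigma_v\xrightarrow{\sim}U_-^v$ identifies $\pi|_{\Sigma_v}$ with this isomorphism, which gives surjectivity and pins down the image.

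The main obstacle is upgrading these bijections to a \emph{scheme-theoretic} isomorphism rather than a bijection on points. For this I would invoke the standard fact that, for unipotent subgroups with complementary sets of root subgroups, the product morphism is an isomorphism of varieties with polynomial inverse; equivalently, the left-to-right column reduction used for injectivity expresses the entries of the $\Sigma_v$-representative of a coset as regular functions of the matrix entries, so the set-theoretic inverse of $\pi|_{\Sigma_v}$ is itself a morphism. Together with injectivity this yields the isomorphism. Alternatively, one could note that $\pi$ is a Zariski-locally trivial $B$-bundle and that $\Sigma_v$ is a section, which makes $\pi|_{\Sigma_v}$ an isomorphism onto its image directly, leaving only the image computation above.
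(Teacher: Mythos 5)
Your argument is correct; note that the paper offers no proof of this proposition at all, only the citation to Fulton's \emph{Young Tableaux}, Section~10.2. Your two ingredients --- the left-to-right column reduction showing each coset has a unique representative in the echelon form $\Sigma_v$ (with the reduction furnishing the polynomial inverse), and the identification $\Sigma_v=\left(U_-\cap vB_-v^{-1}\right)\cdot v$ together with the factorization $U_-=\left(U_-\cap vB_-v^{-1}\right)\cdot\left(U_-\cap vBv^{-1}\right)$ to pin down the image as $\Omega_v^\circ$ --- constitute exactly the standard proof found in that reference, so there is nothing to compare beyond observing that you have supplied the details the paper delegates to the citation.
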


In \cite{WY08} this isomorphism is used to describe the defining ideal of $\KL_{v,w}$. 
Namely, the following varieties are isomorphic
    $$
    \KL_{v,w}\cong \overline{X_w}\cap \Sigma_v,
    $$
where $\overline{X_w}$ is the matrix Schubert variety corresponding to~$w$.
Thus, the defining ideal of $\KL_{v,w}$ is generated by the determinantal equations obtained by imposing the rank conditions of Fulton's essential set from Theorem~\ref{fultonmatrixschubert} to the generic element in $Z^{(v)}$.

\subsection{Weight cone for a Kazhdan--Lusztig variety}\label{sec: usualtorusaction}

Our aim is to investigate the \emph{usual torus action} on KL~varieties, i.e.\ the restriction of the $T$-action on $G/B$ to $\KL_{v,w}$. In particular we would like to examine its complexity using a combinatorial approach.

The $T$-action on $\Omega^\circ_v$ induces a $T$-action on $\Sigma_v$. Given $M\in T$ and $Z\in\Sigma_v$, let $M\cdot Z=~\pi^{-1}((MZ)B/B)$.
We follow \cite{wooyong} to give a concrete description.
Note that $M\cdot Z\in \Sigma_v$, but in general $MZ \notin \Sigma_v$ because the entries $MZ_{v(i),i}$, $i\in[n]$, need not to be equal 1. 
To obtain $\pi^{-1}((MZ)B/B)\in\Sigma_v$ we multiply $MZ$ on the right by the element of $T$ which make these entries 1; this is accomplished by the diagonal matrix $N$ such that for each $i\in[n]$, $N_{i,i}=(M_{v(i),v(i)})^{-1}$, i.e.\ $N=v^{-1}M^{-1}v$.
This action descends to $\KL_{v,w}$.

We now compute the weights of the $T$-action on $\Sigma_v$.
Let $e_1,\ldots,e_n$ be the standard basis for ${\sf M}(T)_\RR\cong\RR^n$. 
Given $M,N\in T$, as above, and $z_{ij}=(Z^{(v)})_{ij}$ we have
    $$
    (MZ^{(v)}N)_{ij}=M_{ii}N_{jj}z_{ij}=M_{ii}M_{v(j)v(j)}^{-1}z_{ij}.
    $$
 Following the convention that the weights of the $z_{ij}$ are positive roots, as in \cite{wooyong}, the above equation implies that the weight of the variable $z_{ij}$ is $e_{v(j)} - e_{i}$.

\begin{rem}
Matrix Schubert varieties make a special class of KL~varieties \cite[proof of Cor~2.6]{wooyong} and the resulting $T$-action from this section is equivalent to that of \Cref{sec_torus_action_MSch}. 
Concretely, let $w_0^n$ be the longest permutation in $S_n$. Define $w_0 \star w_0 \in S_{2n}$ to be the permutation such that $(w_0 \star w_0)(i) = w_0^n (i)$ and $(w_0 \star w_0)(i+n) = w_0^n (i) + n$, for $i \in [n]$. 
Consider $w$ as a permutation of $S_{2n}$, where $w(i) = i$ for $i \in \{n+1, \ldots , 2n\}$ and let $\hat{w} = w_0^{2n} w \in S_{2n}$. 
Then the matrix Schubert variety $\overline{X_w} \subseteq \CC^{n\times n}$ is isomorphic to the KL~variety $\KL_{ w_0\star w_0,\hat w} \subseteq \CC^{2n\times 2n}$.
 
In \Cref{sec_torus_action_MSch} we considered the $T\times T$-action on $\CC^{n\times n}$ where $z_{ij}$ has weight $e_i - f_j$. On the other hand, $Z^{(w_0 \star w_0)}$ has its free coordinates $z_{ij}$ in rows $\{n+1, \ldots, 2n\}$ and columns $[n]$. The usual torus action on the variable $z_{ij}$ has weight $e_{w_0 \star w_0 (j)} - e_{i}=e_{w_0^n(j)}-e_i=e_{n+1-j} - e_{i}$. Since $\{n+1- j\ | \ j\in[n]\}\cap \{n+1,\ldots,2n\}=\varnothing$, by identifying $e_{i}\leftrightarrow f_{i-n}$ we see that the weights of the two actions are equivalent.
\end{rem}

We denote the \emph{cone of weights for the KL variety} in the lattice $M(T)_{\mathbb{R}} \cong \mathbb{R}^n$ associated to the torus $T$ by $\sigma_{v,w}$. Our aim is to determine the weights of the $T$ action on the KL variety $\KL_{v,w} \cong \overline{X_w} \cap \Sigma_v$, or more precisely the cone $\sigma_{v,w}$. We think of the $T$-action in terms of matrix coordinates. The idea of this section is based on the similarity between the definition of a matrix Schubert variety as a subset of the matrix space $\CC^{n\times n}$, the description~\ref{def_Bruhat_mat} of the Bruhat order of permutations and some well-known facts from representation theory. Since we work with a non-empty KL variety~$\KL_{v,w}$, in what follows we always assume that $v \leq w$ under the Bruhat order.

\subsubsection{Inversions and non-inversions of $v$}

A feature of~$v$ which is very important in understanding $\KL_{v,w}$, is its set of inversions. Recall that $\{i,j\} \subset [n]$, where $i < j$, is an inversion of $v$ if in the one-line notation $j$ stands before~$i$. Or, treating $v$ as a function, $v^{-1}(i) > v^{-1}(j)$. A pair which is not an inversion will be called a \emph{non-inversion} of~$v$. We want to relate (non-)inversions of $v$ to entries of $Z^{(v)}$.

\begin{defn}\label{def_noninv_descr}
We say that the entry $(i,j)$ in $Z^{(v)}$ corresponds to a non-inversion of $v$ if $v(j) < i$ and $\{v(j), i\}$ is a non-inversion of $v$. 
\end{defn}

\begin{rem}\label{rem_all_noninv}
Note that $v(j) < i$ is automatically satisfied for entries which are not set to 0 or 1 by Definition~\ref{def_omega0v}.
Observe that all entries of~$Z^{(v)}$ which are not set to 0 or 1 by Definition~\ref{def_omega0v} correspond to non-inversions of~$v$. This is because these are precisely the entries lying in the opposite Rothe diagram of~$v$, and the condition in Definition~\ref{def_opposite_Rothe} is the same as in Definition~\ref{def_noninv_descr}.
\end{rem}

\subsubsection{Unexpected zeros}

The entries $z_{ij}$ in $Z^{(v)}$ correspond to coordinates on~$\Omega_v^\circ$, and therefore to weights of the $T$-action on~$\Omega_v^\circ$. We need to know which of these weights are still present in the intersection of~$\Sigma_v$ with~$\overline{X_w}$. Some of the weights from the weight cone of $\Omega_v^\circ$ may not lie in the weight cone of $\KL_{v,w}$. The simplest examples are weights corresponding to the entries $z_{ij}$ of the dominant piece for~$w$. These variables vanish on $\KL_{v,w}$, so their weights are not in~$\sigma_{v,w}$, unless they are produced as positive combinations of some other weights, i.e.\ they correspond to monomials.

However, the dominant piece for $w$ is not the only reason for the fact that some weights from $\Omega_v^\circ$ are not present in~$\sigma_{v,w}$. It may happen that the configuration of 0s and 1s in $Z^{(v)}$ together with the conditions for belonging to $\overline{X_w}$ (Fulton's equations coming from submatrix rank conditions, see~Theorem~\ref{fultonmatrixschubert}) make some $z_{ij}$ vanish on the whole~$\KL_{v,w}$. The ideal of $\KL_{v,w}$, i.e.\ the ideal which is obtained from the ideal of $\overline{X_w}$ by substituting 0s and 1s as in Definition~\ref{def_omega0v}, may thus contain some variables $z_{ij}$, not present in the ideal of $\overline{X_w}$.

\begin{defn}\label{def_unexp0}
By an \emph{unexpected 0} (for $\KL_{v,w}$) we understand an entry $z_{ij}$ of $Z^{(v)}$, not constantly~0 on~$\overline{X_w}$, such that for every matrix in $\KL_{v,w}$ we have $z_{ij} = 0$. Equivalently, $z_{ij}$ is in the ideal of $\KL_{v,w}$, but it does not belong to the ideal of~$\overline{X_w}$. 
\end{defn}

In particular, $z_{ij}$ corresponding to entries of the dominant piece of $D^{\circ}(w)$ are unexpected 0s.

\begin{ex}\label{ex_rectangle_unexp0}
Consider $w =47681352$ and $v = 32187654$ in $S_8$. Look at defining conditions for $\overline{X_{w}}$. The diagram $D\degree(w)$ has three parts: the dominant piece $\{z_{41}, z_{51}, z_{61}, z_{71}, z_{81}, z_{82}, z_{83}\}$,  $\{z_{25}, z_{35}\}$ and $\{z_{55}, z_{56}\}$. The entries of the dominant piece are unexpected 0s (marked in blue in the matrix below), but these are not the only ones. The essential boxes of the other pieces of $D\degree(w)$ are $z_{25}$ and $z_{56}$, and they give conditions that the ranks of corresponding lower left corner submatrices are not bigger that 4 and 3 respectively. 

Now look at $v$. In the matrix below, entries in the regions above the horizontal line and to the right of the vertical line are set to~0 or~1 by Definition~\ref{def_omega0v}, hence the only non-constant $z_{ij}$ appear in the lower left rectangle.
When we compute the determinant of a $5\times 5$ minor of $Z^{(v)}$ given by choice of columns $1,\ldots,5$ and rows $2,3,i,7,8$ where $i = 4,5,6$, we obtain $z_{i3}$. The conditions given by $w$ require this determinant to vanish, hence $z_{43}, z_{53}, z_{63}$ are unexpected~0s (marked in red in the matrix below). Also, the determinant of a $4\times 4$ minor of columns $2,4,5$ and rows $5,7,8$ is $z_{52}$, so it is an unexpected~0 (also marked in red).

\[
\left(\begin{array}{ccc|ccccc}
0 & 0 & 1 & 0 & 0 & 0 & 0 & 0\\
0 & 1 & 0 & 0 & 0 & 0 & 0 & 0\\
1 & 0 & 0 & 0 & 0 & 0 & 0 & 0\\
\hline
z_{41} & z_{42} & {\color{red} 0} & 0 & 0 & 0 & 0 & 1\\
{\color{blue} 0} & {\color{red} 0} & {\color{red} 0} & 0 & 0 & 0 & 1 & 0\\
{\color{blue} 0} & z_{62} & {\color{red} 0} & 0 & 0 & 1 & 0 & 0\\
{\color{blue} 0} & z_{72} & z_{73} & 0 & 1 & 0 & 0 & 0\\
{\color{blue} 0} & {\color{blue} 0} & {\color{blue} 0} & 1 & 0 & 0 & 0 & 0\\
\end{array}\right)
\]

Thus we obtain $\KL_{v,w} \cong \mathbb{C}^5$. The weight cone $\sigma_{v,w}$ is spanned by weight corresponding to $z_{ij}$ which are not unexpected 0s, given in the matrix above. Since the weight corresponding to $z_{ij}$ is $e_{v(j)-e_i}$, we have $\cone(e_3-e_4, e_2-e_4, e_2-e_6, e_2-e_7, e_1-e_7)$. It is 5-dimensional, hence the $T$-action on $\KL_{v,w}$ has a dense orbit.

\end{ex}

The main question of the next section is how to determine unexpected 0s for given $v$ and $w$ without computing the ideals. We would like to show a combinatorial characterization of such entries which leads to a direct description of the weight graph and the weight cone of the $T$-action on~$\KL_{v,w}$. We also explain the relation to the description of the weight cone of the $T$-action on $X_w \cap v\Omega_{id}^o$, investigated in~\cite{leemasuda}.

\subsubsection{Weights of the usual torus action}
As explained in \cite{Insko-Yong}, given a subvariety $\mathcal{X}\subseteq G/B$ and $gB\in \mathcal{X}$ the intersection $\mathcal X\cap  g\Omega_{\id}^\circ$
is an affine open neighborhood of $gB$ in $\mathcal X$. 
It follows that $T_{gB}(\mathcal{X}) = T_{gB}(\mathcal X\cap  g\Omega_{\id}^\circ)$. 
If we further assume that $\mathcal{X}$ is a $T$-invariant subvariety of $G/B$ and $gB$ is a $T$-fixed point, then $\mathcal X\cap  g\Omega_{\id}^\circ$ is $T$-invariant.
Moreover, $T_{gB}(\mathcal X\cap  g\Omega_{\id}^\circ)$ inherits a $T$-action and, since $\mathcal X\cap  g\Omega_{\id}^\circ$ is affine, the $T$-weights are the same as those for $\mathcal X\cap  g\Omega_{\id}^\circ$, see e.g.\ \cite[Sec.~5]{Tymoczko}.

The case $\mathcal{X}=X_w$ is well understood and we have that the $T$-weights of $T_{vB}(\mathcal{X}) $ are $\{\epsilon_{v(i)}-\epsilon_{v(j)} \mid i>j,\ t_{v(i)v(j)}v\le w\}$, where $t_{v(i)v(j)}$ is the permutation transposing $v(i)$ and $v(j)$, see e.g.~\cite[Thm.~5.5.3]{BL}.
It follows that these weights are also the $T$-weights of $X_w\cap v\Omega_{\id}^\circ$.
However, these weights come from a choice of coordinates for $\CC[v\Omega_{\id}^\circ]$ different from the one we use.
Next, we describe how to change the weights to our coordinates.
To avoid confusion, let $\epsilon_1,\epsilon_2,\ldots,\epsilon_n$ denote the weights associated to the coordinates from~\cite{BL}.

The content of the next two paragraphs can be found in \cite[Sections 1.2 and 1.3]{brion}.
Let $B_- \subseteq GL_{n}(\CC)$ denote the Borel subgroup of lower triangular matrices and $U_-\subseteq B_-$ the subgroup with diagonal entries equal to 1. 
Note that $U_-\cong\CC^{\binom{n}{2}}$.
We have a $T$-equivariant isomorphism of affine spaces
\begin{equation}\label{eq-u-isom}
 U_-  \cong \Omega_{\id}^\circ, \qquad 
 g\mapsto gB.
\end{equation}
Note that the generic element $Z^{(-)}$ of $U_-$ is the lower triangular matrix with 1s on the diagonal and unknowns $x_{ij}$ for $i>j$.
It follows that $\CC[U_-]\cong\CC[x_{ij} \mid i>j]$.
The $T$-action on $U_-$ is $M\cdot g=MgM^{-1}$, for $M\in T$, so that the weight corresponding to $x_{ij}$ is $\epsilon_i-\epsilon_j$, under the conventions of~\cite{BL}.

The isomorphism \eqref{eq-u-isom} induces the $T$-equivariant isomorphism
\begin{equation}\label{eq-vu-isom}
	vU_-\cong v\Omega_{\id}^\circ, \qquad 
 vg\mapsto vgB.
\end{equation}
The generic element of $vU_-$ is $vZ^{(-)}$ and $\CC[vU_-]\cong\CC[x_{ij} \mid i>j]$.
Note that $x_{ij}$ is the $(v(i),j)$-entry of $vZ^{(-)}$. The $T$-action on $vU_-$ is $M\cdot vg=Mvg(v^{-1}M^{-1}v)$, for $M\in T$.
We then have that $\epsilon_{i}-\epsilon_{v(j)}$ is the weight for the variable at position $(i,j)$, namely $x_{v^{-1}(i)j}$. In fact, \eqref{eq-vu-isom} restricts to a $T$-equivariant isomorphism 
\begin{equation}\label{eq-schu-isom}
	X_w\cap v\Omega_{\id}^\circ\cong vU_-\cap \overline{X_w}.
\end{equation}

By comparing the entries of $vZ^{(-)}$ with the entries of $Z^{(v)}$, we obtain the inclusion $\Sigma_v\hookrightarrow vU_-$ associated to the map
	\begin{align*}
	\CC[x_{ij} \mid i>j]&\rightarrow \CC[z_{ij}\mid (i,j)\in\oDia(v)]
	\\
	x_{v^{-1}(i)j}&\mapsto \begin{cases}z_{ij}  & (i,j)\in\oDia(v)	\\0	& \text{else}\end{cases}
.\end{align*}
Recall that the weight of $z_{ij}$ is $e_{v(j)} - e_{i}$.
It now follows that we can obtain the $T$-weights under our coordinates via the correspondence $\epsilon_i\leftrightarrow -e_i$.

\begin{lem}\label{lem-weights}
The set of $T$-weights of $ \KL_{v,w}$ is $\{e_{v(j)}-e_i \mid t_{v(j),i}v\le w,\ (i,j)\in\oDia(v) \}$. That is, $t_{v(j),i} v \leq w$ if and only if $z_{ij}$ is not an unexpected zero.
\end{lem}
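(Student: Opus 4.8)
The plan is to realize the set of $T$-weights of $\KL_{v,w}$ as the weights of the tangent space $T_{vB}\KL_{v,w}$ at its unique $T$-fixed point, and to extract these from the description of the tangent weights of the ambient Schubert variety recorded above. Starting from the isomorphism \eqref{klisomorp}, namely $X_w\cap v\Omega_{\id}^\circ\cong\CC^{\ell(v)}\times\KL_{v,w}$, I would pass to tangent spaces at the fixed point, giving the $T$-equivariant splitting $T_{vB}(X_w\cap v\Omega_{\id}^\circ)=T_0\CC^{\ell(v)}\oplus T_{vB}\KL_{v,w}$. The weight set of the left-hand side is known: combining the formula $\{\epsilon_{v(i)}-\epsilon_{v(j)}\mid i>j,\ t_{v(i)v(j)}v\le w\}$ of \cite{BL} with the translation $\epsilon_k\leftrightarrow -e_k$ fixed above, the coordinate at matrix position $(p,q)$ (with $v^{-1}(p)>q$) has weight $e_{v(q)}-e_p$ and contributes to the tangent space exactly when $t_{v(q),p}v\le w$.

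I would then separate the two summands by the sign pattern of the weights. Since $\KL_{v,w}\cong\overline{X_w}\cap\Sigma_v$ lies inside $\Sigma_v$, whose coordinates $z_{ij}$ are indexed by $(i,j)\in\oDia(v)$ and hence satisfy $v(j)<i$, the space $T_{vB}\KL_{v,w}$ carries only \emph{non-inversion} weights, of the shape $e_{\mathrm{small}}-e_{\mathrm{large}}$. On the other hand the full weight set above contains all $\ell(v)$ inversion weights of $v$, of the shape $e_{\mathrm{large}}-e_{\mathrm{small}}$: at an inversion position the transposition $t_{v(q),p}$ undoes a value-inversion of $v$, so $t_{v(q),p}v<v\le w$ and the condition holds automatically. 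As the inversion and non-inversion weights are disjoint, the splitting forces $T_0\CC^{\ell(v)}$ to absorb precisely the $\ell(v)$ inversion weights, so that $T_{vB}\KL_{v,w}$ has weight set equal to the non-inversion part of the BL set, namely $\{e_{v(j)}-e_i\mid (i,j)\in\oDia(v),\ t_{v(j),i}v\le w\}$. This already identifies the candidate set.

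It remains to match ``$\chi_{ij}:=e_{v(j)}-e_i$ is a weight of $T_{vB}\KL_{v,w}$'' with ``$z_{ij}$ is not an unexpected zero.'' One implication is formal: the defining ideal $I$ of $\KL_{v,w}$ is $T$-homogeneous and the weights $\chi_{ij}$ are pairwise distinct, so if $z_{ij}$ is an unexpected zero, i.e.\ $z_{ij}\in I$, then its class in $\mathfrak m/\mathfrak m^2$ vanishes and $\chi_{ij}$ is not a tangent weight. Contrapositively, $t_{v(j),i}v\le w$ implies $z_{ij}$ is not an unexpected zero. For the converse I must show that $t_{v(j),i}v\not\le w$ forces $z_{ij}$ to vanish identically on $\KL_{v,w}$; by the computation above we already know $\chi_{ij}$ is then not a tangent weight, so $z_{ij}$ lies in the span of the linear parts of $I$, and since each generator coming from Fulton's essential determinants \cite{fulton} is $T$-homogeneous and vanishes at $vB$, its degree-one term is a single variable, so some generator has the form $z_{ij}+(\text{higher order})$.

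The main obstacle is exactly this last step: I must rule out the possibility that $z_{ij}$ is a \emph{nonzero} function on $\KL_{v,w}$ that happens to lie in $\mathfrak m^2$, for then $\chi_{ij}$ would be non-tangent while $z_{ij}$ is not an unexpected zero, breaking the equivalence. I would resolve this by examining the essential minor that produces the linear term $z_{ij}$: under the $0/1$ pattern of $Z^{(v)}$ imposed by $\Sigma_v$, this minor collapses to $\pm z_{ij}$ precisely when the Bruhat condition fails, placing $z_{ij}$ itself in $I$ --- this is the mechanism visible in Example~\ref{ex_rectangle_unexp0}, where several minors evaluate to single variables. Equivalently, one can invoke the squarefree Gröbner degeneration of the Schubert determinantal ideal, under which the embedding dimension at $vB$ equals the number of surviving coordinates, so that no nonvanishing coordinate lies in $\mathfrak m^2$. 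Carrying out this collapse combinatorially for all $(i,j)$ with $t_{v(j),i}v\not\le w$ is the crux of the proof.
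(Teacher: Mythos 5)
Your main computation follows the same route as the paper's proof: both reduce to the tangent-weight formula of \cite[Thm~5.5.3]{BL} at the fixed point $vB$, translate via $\epsilon_k\leftrightarrow -e_k$, and then separate out the part supported on $\Sigma_v$. The paper performs the separation by chaining the inclusions $\KL_{v,w}\hookrightarrow vU_-\cap\overline{X_w}$ and $\KL_{v,w}\hookrightarrow\Sigma_v$ and ``restricting'' to the variables of $\Sigma_v$; you do it via the $T$-equivariant splitting coming from \eqref{klisomorp} together with the observation that the $\ell(v)$ inversion weights satisfy the Bruhat condition automatically and must therefore be absorbed by the $\CC^{\ell(v)}$ factor. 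Your version of this step is, if anything, slightly more careful: the inclusions alone give only one containment of weight sets, and it is the product decomposition that forces equality. Up to ``this already identifies the candidate set,'' your argument is correct and is essentially the paper's.

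The gap is in your last step, and you have named it yourself: proving that $t_{v(j),i}v\not\le w$ forces $z_{ij}$ to vanish \emph{identically} on $\KL_{v,w}$, rather than merely failing to give a cotangent direction at $vB$ (that is, distinguishing $z_{ij}\in I$ from $z_{ij}\in I+\mathfrak{m}^2$). Neither of your proposed fixes is carried out, and neither is convincing as stated. The assertion that the relevant essential minor ``collapses to $\pm z_{ij}$ precisely when the Bruhat condition fails'' is exactly the statement that would need to be proved; a priori the specialized Fulton generators could produce a relation of the form $z_{ij}+(\text{quadratic})$ without producing $z_{ij}$ itself, which is precisely the scenario you are trying to exclude. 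The Gr\"obner-degeneration argument also does not close the gap as stated: the KL ideal is not homogeneous for the standard grading (see the cubic relation in Example~\ref{complex1KLex}), so the degree-one piece of a non-graded initial ideal can be strictly larger than that of the ideal, and a squarefree initial ideal does not by itself forbid a single variable from being the initial term of a nonlinear element. To be fair, the paper's own proof does not engage with this point either --- it computes the tangent weights and passes to the ``that is'' reformulation without further argument --- so you have correctly located the one place where a genuinely additional input is needed (for instance the $T$-stable curve description of $X_w$ through $vB$, which handles the ``if'' direction cleanly, or an explicit analysis of the specialized generators for the ``only if'' direction). As a standalone proof, however, your proposal is incomplete at exactly this step.
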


\begin{proof}
By the inclusion $\Sigma_v\hookrightarrow vU_-$ above, we have that the $T$-weights of $\Sigma_v$ are precisely the $T$-weights of $vU_-$ corresponding to the variables in $\Sigma_v$.
These are the weights at positions $(i,j)\in\oDia(v)$, i.e.~the weights
	\begin{align*}
	\{\epsilon_{i}-\epsilon_{v(j)} \mid v^{-1}(i)>j,\ (i,j)\in\oDia(v)\}
	&=
	\{e_{v(j)}-e_i \mid v^{-1}(i)>j,\ (i,j)\in\oDia(v) \}
	\\&=
	\{e_{v(j)}-e_i \mid (i,j)\in\oDia(v) \},
	\end{align*}
where in the last equality we used that, by definition of $\oDia(v)$, if $(i,j)\in\oDia(v)$, then $v^{-1}(i)>j$.

By \cite[Thm.~5.5.3]{BL} and \eqref{eq-schu-isom} we have that the $T$-weights of $vU_-\cap \overline{X_w}$ are
	$$
	\{\epsilon_{i}-\epsilon_{v(j)} \mid v^{-1}(i)>j,\ t_{v(j),i}v\le w\}
	=
	\{e_{v(j)}-e_i \mid v^{-1}(i)>j,\ t_{v(j),i}v\le w\}.
	$$
We obtain the $T$-weights of $\KL_{v,w}$ by using the inclusion
	$$
	\KL_{v,w}\cong\Sigma_v\cap \overline{X_w}\hookrightarrow vU_-\cap \overline{X_w},
	$$
to restrict the preceding weights to those corresponding to the variables in $\Sigma_v$, i.e.
	$$
	\{e_{v(j)}-e_i \mid v^{-1}(i)>j,\ t_{v(j),i}v\le w,\ (i,j)\in\oDia(v)\}
	=
	\{e_{v(j)}-e_i \mid  t_{v(j),i}v\le w,\ (i,j)\in\oDia(v)\}.
	$$
\end{proof}

Now we determine a subset of the set of $T$-weights for $\KL_{v,w}$ which corresponds to the extremal rays of~$\sigma_{v,w}$. This is related to~\cite[Prop.~7.6]{leemasuda} and this is explained in more detail in Remark~\ref{rem-LMgraph}.

\begin{thm}\label{permittingdeletingedges}
The extremal ray generators of the cone $\sigma_{v,w}$ of $T$-weights on $\KL_{v,w}$ are the weights corresponding to $z_{ij}$ from $Z^{(v)}$ such that $t_{v(j),i}v \leq w$ and $\ell(t_{v(j),i}v) - \ell(v) = 1$.
\end{thm}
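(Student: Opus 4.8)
The plan is to characterize the extremal rays of $\sigma_{v,w}$ among its generators, which by Lemma~\ref{lem-weights} are the weights $e_{v(j)}-e_i$ with $t_{v(j),i}v\le w$ and $(i,j)\in\oDia(v)$. Since the weight cone is an edge cone of an acyclic directed graph (its generators have the form $e_a - e_b$), I would first interpret the situation graph-theoretically, following \Cref{sec-toric-graph}. Each weight $e_{v(j)}-e_i$ corresponds to a directed edge $(v(j)\to i)$ in a graph $\Gamma_{v,w}$ on vertex set $[n]$. The key structural fact is that all these edges point ``downward'' in the sense that $v(j)<i$ (as $(i,j)\in\oDia(v)$ forces $v(j)<i$); so $\Gamma_{v,w}$ is acyclic. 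The question of which generators are extremal rays then becomes: which edges $(v(j)\to i)$ cannot be written as a nonnegative combination of the vectors $e_{a}-e_{b}$ coming from other edges.

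The central observation I would exploit is that in an edge cone spanned by vectors $e_a-e_b$, a generator $e_a - e_b$ fails to be extremal precisely when there is a directed path from $a$ to $b$ in the graph using \emph{other} edges, since then $e_a - e_b = \sum (e_{a_k} - e_{a_{k+1}})$ telescopes. Thus $e_{v(j)}-e_i$ is extremal if and only if there is no directed path from $v(j)$ to $i$ through intermediate vertices, i.e.\ no factorization of the ``jump'' from $v(j)$ to $i$ into shorter admissible jumps. I would translate this back into Bruhat-order language: the transposition $t_{v(j),i}$ can be decomposed through an intermediate value $c$ with $v(j)<c<i$ into $t_{v(j),c}$ and $t_{c,i}$, and a telescoping path exists exactly when such intermediate admissible weights (with $t_{v(j),c}v\le w$ and a compatible second step, both corresponding to entries of $Z^{(v)}$) are present. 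The heart of the argument is therefore to show the equivalence:
\begin{equation*}
e_{v(j)}-e_i \text{ is extremal} \iff \ell(t_{v(j),i}v)-\ell(v)=1.
\end{equation*}

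For the direction $\ell(t_{v(j),i}v)-\ell(v)=1 \Rightarrow$ extremal, I would use that a length-one jump in Bruhat order means $t_{v(j),i}$ is a \emph{cover} in the relevant sense: there is no position strictly between rows $v(j)$ and $i$ whose value in $v$ lies strictly between $v(j)$ and $i$ (this is the standard criterion for $\ell(t_{ab}v)-\ell(v)=1$, that no $c$ with $v(j)<c<i$ appears in $v$ ``between'' the two transposed positions). This combinatorial gap condition precisely prevents any telescoping intermediate weight from existing in the spanning set, forcing $e_{v(j)}-e_i$ to be an extremal ray. Conversely, if $\ell(t_{v(j),i}v)-\ell(v)>1$, such an intermediate value $c$ exists, and I would produce from it two weights in the spanning set whose sum is $e_{v(j)}-e_i$, using the fact (via Lemma~\ref{lem-weights}) that the relevant intermediate transpositions also satisfy the $\le w$ condition — here I would lean on the chain property of Bruhat order (if $t_{v(j),i}v\le w$ and $t_{v(j),i}v$ covers $v$ through $c$, the intermediate elements remain $\le w$).

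The main obstacle I anticipate is this converse direction: verifying that the intermediate weights I construct actually lie in the spanning set of $\sigma_{v,w}$ — that is, that their defining transpositions satisfy both the Bruhat condition $\le w$ \emph{and} correspond to genuine entries $(i',j')\in\oDia(v)$ of $Z^{(v)}$, rather than entries forced to $0$ or $1$. The bookkeeping between the ``value'' indices used in transpositions $t_{v(j),i}$ and the ``position'' indices $(i,j)$ of matrix entries is delicate, and I would need to check carefully that an intermediate admissible value $c=v(j')$ for some column $j'$ yields a legitimate free entry. I expect this to require a careful case analysis combining the gap criterion for covers in Bruhat order with the $\oDia(v)$ membership condition, and this is where most of the real work lies; the extremality/telescoping bookkeeping itself is comparatively routine once the graph picture is set up.
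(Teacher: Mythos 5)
Your proposal follows essentially the same route as the paper: both reduce extremality to the presence or absence of intermediate $1$s of $v$ in the rectangle spanned by $(i,j)$ and $(v(j),v^{-1}(i))$, decompose a non-cover weight through such intermediate entries as a telescoping sum, and verify that the intermediate transpositions remain $\leq w$ and correspond to genuine entries of $\oDia(v)$ --- the latter being exactly the step you flag as the remaining work, which the paper settles with a short rank-function computation ($\rank_{t_{v(c),i}v}(a,b)=\rank_v(a,b)+1=\rank_{t_{v(j),i}v}(a,b)$). Your final extremality argument via the cover criterion likewise matches the paper's concluding contradiction.
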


\begin{proof}
The containment 
	$$\cone(e_{v(j)}-e_i \mid t_{v(j),i}v\le w,\ (i,j)\in\oDia(v),\ \ell(t_{v(j),i}v) - \ell(v) = 1)\subseteq\sigma_{v,w}$$
immediately follows from Lemma~\ref{lem-weights}.

Again by Lemma~\ref{lem-weights}, it suffices to show that if we choose only $z_{ij}$ satisfying  $t_{v(j),i}v\le w$ and in addition $\ell(t_{v(j),i}v) - \ell(v) = 1$, then the corresponding weights do not span a smaller cone. The condition $\ell(t_{v(j),i}v) - \ell(v) = 1$ means that in the one-line notation of~$v$ there are only elements smaller than $v(j)$ or greater than $i$ between $v(j)$ and $i$.
In terms of matrices, this means that in the permutation matrix of $v$ the rectangle determined by $(i,j)$ and $(v(j), v^{-1}(i))$ contains no 1s of~$v$ apart from two corners $(v(j),j)$ and $(i,v^{-1}(i))$. 

First note that, by Remark~\ref{rem_all_noninv}, $z_{ij}$ corresponds to a non-inversion of $v$, hence we have $\ell(t_{v(j),i}v) - \ell(v) > 0$.
Assume that $\ell(t_{v(j),i}v) - \ell(v) > 1$ and let $(v(c),c)$ be an entry containing a~1 of~$v$ in this rectangle. Then the entries $(v(c),j)$ and $(i,c)$ correspond to non-inversions of~$v$.  Moreover, we have $t_{v(j),v(c)}v \leq w$ and $t_{v(c),i}v \leq w$ respectively. Let us prove the latter statement for $(v(c),j)$ and assume that $t_{v(c),i}v \nleq w$. Then there is $(a,b)$ with $v(j) < a \leq v(c)$ and $j \leq b < c$ such that $\rank_{t_{v(c),i}v}(a,b) > \rank_w(a,b)$. However, $\rank_{t_{v(c),i}v}(a,b) = \rank_v(a,b) + 1 = \rank_{t_{v(j),i}}(a,b)$, which contradicts $t_{v(j),i}v \leq w$. Thus we can build a sequence of entries containing~1s of~$v$ 
\[(v(j), j) = (v(c_0),c_0), (v(c_1),c_1),\ldots,(v(c_k),c_k), (v(c_{k+1}), c_{k+1}) = (i, v^{-1}(i))\]
all inside the rectangle with corners $(i,j)$ and $(v(j), v^{-1}(i))$, such that for any $p=0,\ldots,k$ we have $\ell(t_{v(c_p),v(c_{p+1})}v) - \ell(v) = 1$. We construct it inductively starting from $(v(j), j)$ and at the $p$-th step we add $(v(c_p),c_p)$ being the entry containing~1 of~$v$ which is closest to $(v(c_{p-1}),c_{p-1})$ in the rectangle with corners $(v(c_{p-1}),c_{p-1})$ and $(i, v^{-1}(i))$. Then the $T$-weight $e_{v(j)}-e_i$ corresponding to the entry $z_{ij}$ is the sum of $T$-weights $e_{v(c_p)}-e_{v(c_{p+1})}$ for $p=0,\ldots,k$.

We are left with showing that these weights are indeed extremal ray generators of $\sigma_{v,w}$. Let $z_{c_0,d_0}$ satisfy the assumptions of the theorem and the corresponding weight be the sum of weights corresponding to $z_{c_1,d_1}, \ldots, z_{c_k,d_k}$ with $c_1 < c_2 <\ldots < c_k$. That is, $e_{v(d_0)} - e_{c_0} = e_{v(d_1)} - e_{c_1} + \ldots + e_{v(d_k)} - e_{c_k}$.

Note that by Definition~\ref{def_omega0v} we obtain that $$v(d_0) = v(d_1)< c_1 = v(d_2) <c_2 = v(d_3)< \ldots < c_{k-2} = v(d_{k-1}) < c_{k-1} = v(d_k) < c_k =c_0.$$ Then  $(v(d_l),d_l)$ for all $l \in \{2,\ldots,k-1\}$ is contained in the rectangle with corners $(v(d_0),d_0)$ and $(c_0,v^{-1}(c_0))$, which makes $\ell(t_{v(d_0),c_0}) - \ell(v) =1$ impossible.

Since $\ell(t_{v(d_0),c_0}) - \ell(v) =1$ is satisfied by the hypothesis, in the permutation matrix of $v$, the rectangle determined by $(c_0,d_0)$ and $(v(d_0),v^{-1}(c_0))$ must contain no 1s of $v$ other than the corners $(v(d_0),d_0)$ and $(c_0,v^{-1}(c_0))$. However, that is not true, since $(v(d_l),d_l)$ is contained in this rectangle for all $l \in \{2,\ldots,k-1\}$, hence a contradiction.
\end{proof}

\begin{ex}
It is possible that~$z_{ij}$ is an unexpected~0, but the weight corresponding to~$z_{ij}$ belongs to~$\sigma_{v,w}$. Let $v=1324$ and $w=3241$ in $S_4$. Then the entry~$z_{41}$ is an unexpected~0 because it lies in the dominant piece of~$w$. By Lemma~\ref{lem-weights}, the entries $z_{43}$ and $z_{21}$ are not unexpected~0s. By Lemma~\ref{lem-weights}, the weights for those entries are $e_{v(3)} - e_4 = e_2 - e_4$ and $e_{v(1)} - e_2 = e_1 - e_2$ respectively and thus the $T$-weight for the unexpected~0 $z_{41}$ is in~$\sigma_{v,w}$.  
\end{ex}

\begin{rem}\label{rem-LMgraph}
We would like to relate the proofs above to one of the main results of~\cite{leemasuda}, namely the description of the weight cone $D_w(v)$ for the usual torus action on the Schubert variety $X_w$ on an affine neighborhood of the fixed point corresponding to the permutation~$v$ (see~\cite[Def.~7.1, Def.~7.4]{leemasuda}). As in the proof of Lemma~\ref{lem-weights}, one may use~\cite[Thm.~5.5.3]{BL} to investigate $D_w(v)$ by looking at the tangent space to $X_w$ at $v$. One obtains that $D_w(v)$ is spanned by all weights $e_{v(j)} - e_i$ such that $t_{v(j), i}v < w$ and $|\ell(v) - \ell(t_{v(j), i}v)| = 1$, using methods similar to the proof of Theorem~\ref{permittingdeletingedges}. 

Note that this set of weights can be divided into ones corresponding to non-inversions, coming from the KL part in the isomorphism~(\ref{klisomorp}) and having the positive difference of lengths, and the ones corresponding to inversions, coming from the affine part in the isomorphism~(\ref{klisomorp}), with the negative difference of lengths.
\end{rem}

\subsection{Torus action in terms of graphs}\label{interactingwithsimpledirectedgraphs}
In this section, we define the directed acyclic graph $G_{v,w}$ such that its edge cone is the weight cone of the usual torus action on $\KL_{v,w}$. In other words, we interpret Theorem~\ref{permittingdeletingedges} in terms of graphs. Moreover, we also observe the connection to the graph associated to the usual torus action on $v\Omega_{\id}^{\circ} \cap X_w$. We explain how to determine the complexity of KL varieties using these graphs. 

Let $G$ be a directed acyclic graph.
Recall that $ (x \to y)$ denotes a directed edge from vertex $x$ to vertex $y$. A directed edge is called \emph{indecomposable} if there exist no other directed path connecting the vertex $x$ to $y$. Note that in a directed acyclic graph one may decompose edges into indecomposable ones, i.e.\ for any edge $(x \to y)$ there is a path from $x$ to $y$ consisting of indecomposable edges: Either $(x \to y)$ is indecomposable, or we have another directed path from $x$ to $y$. In the latter case, applying this argument inductively to each decomposable edge of this path will end, since otherwise we would have a directed cycle. In particular, remark that the indecomposable edges of $G$ correspond to the extremal ray generators of the dual edge cone $\sigma^{\vee}_G$, defined in Section~\ref{sec-toric-graph}.

\begin{defn}\label{klgraph}
Let $v,w\in S_n$ and let $z_{ij}$ be a coordinate of $Z^{(v)}$. We define the graph $\widetilde{G_{v,w}}$ with
    $$
    V(\widetilde{G_{v,w}}):=[n]\quad\text{and}\quad
    E(\widetilde{G_{v,w}}):=\{(v(j) \to i) \ | \ z_{ij} \text{ is not an unexpected 0 of } \KL_{v,w} \}.
    $$
We define $G_{v,w}$ to be the subgraph with edge set $E(G_{v,w})$ consisting of the indecomposable edges of $E(\widetilde{G_{v,w}})$.
\end{defn}

Recall that $\sigma_{v,w}$ denotes the cone of weights of the $T$-action on the KL variety $\KL_{v,w}$. 
Note that $\sigma_{v,w}=\sigma^{\vee}_{G_{v,w}}=\sigma^\vee_{\widetilde{G_{v,w}}}$.
Moreover, by Theorem~\ref{permittingdeletingedges}, we have that
    \begin{equation}\label{eq-edges-kl-graph}
        E(G_{v,w})=\{ (v(j) \to i) \ | \ i > j,  \ t_{v(j),i} v \leq w \text{ and } \ell(t_{v(j),i} v) - \ell(v) =1 \}.
    \end{equation}

Thus, we can use the following formulas for the dimension of the dual edge cone of $\sigma_{v,w}$ of $G_{v,w}$ and relate it to the complexity of a KL variety.

\begin{cor}\label{cor_dim_formulas}
The dimension of $\sigma_{v,w}$ of $G_{v,w}$, calculated as in \Cref{dimofcone}, is:
\begin{equation}\label{eq-KL-cone-dim} \dim(\sigma_{v,w}) =|V(G_{v,w})| - \# \text{(connected components of } G_{v,w}).
\end{equation}

Moreover, $\KL_{v,w}$ is a KL variety of complexity-$k$  if and only if 
    \begin{equation}\label{eq-KL-complexity}
    \dim(\sigma_{v,w}) = \dim(\KL_{v,w}) - k = \ell(w) - \ell(v)-k.
    \end{equation}
\end{cor}

We now compare the complexity of KL varieties with the complexity of a Richardson variety. Given $v,w\in S_n$, the \emph{Richardson variety} $X^v_w$ is defined to be the intersection of the Schubert variety $X_w$ and the opposite Schubert variety $X^v: = \overline{\Omega^\circ_v}=\overline{B_-vB/B}$. 
Since Richardson varieties are invariant under the $T$-action, they are $T$-varieties.

\begin{cor}
\label{cor_richi_KL} The complexity of the Kazhdan–Lusztig variety $\KL_{v,w}$ is the same as that of the Richardson variety $X_w^v$.
\end{cor}

\begin{proof}
The Bruhat interval polytope ${\sf Q}_{v,w}$, introduced by Kodama--Williams in \cite{KodWil}, is the moment map image of $X_w^v$ with respect to the $T$-action. Because of this, the complexity of $X_w^v$ is given by
    $$
    \dim(X_w^v)-\dim({\sf Q}_{v,w})=\ell(w)-\ell(v)-\dim({\sf Q}_{v,w}).
    $$
In \cite[Thm.~4.6]{TsuWil} a formula for $\dim({\sf Q}_{v,w})$ is given and using \cite[Def.~4.9 and Prop.~4.10]{TsuWil} it can be phrased as counting the number of connected components of the graph with vertex set $[n]$ and edge set
$$
\overline{T}(v,w)=\{(i \to j) \mid i<j,\ vt_{ij}\le w,\ \ell(vt_{ij})-\ell(v)=1 \}.
$$
One can see from \eqref{eq-edges-kl-graph} that this graph is isomorphic to $G_{v,w}$. It follows from \eqref{eq-KL-cone-dim} that 
$$
\dim({\sf Q}_{v,w})=\dim(\sigma_{v,w}),
$$
and thus both varieties have the same complexity.
\end{proof}

As a consequence of this corollary and \cite[Prop.~6.4]{LeeMasPar_toric_bruhat}, we immediately have that for every $k$ there is a KL variety of complexity-$k$. We will also verify this in the next section, when we compute the complexity of $\KL_{v,w}$ for $v$ of small length.

\begin{rem}
\Cref{cor_richi_KL} also has a geometric explanation. Fix $v\le w$.
In \cite[proof of Lemma 2.1]{KnutsonWooYong} it is shown that for each $u$ such that $v\le u\le w$ there is a $T$-equivariant isomorphism
$$
u\Omega_{\id}^\circ \cap X_w^v\to(X_w\cap \Omega_{u}^\circ)\times (X^v\cap \Omega^{u}_\circ),
$$
where $\Omega^{u}_\circ$ is the $B$-orbit $BuB/B$.
The case $u=v$ gives a $T$-equivariant isomorphism
$$
v\Omega_{\id}^\circ \cap X_w^v\to\KL_{v,w}\times \{vB\}\cong\KL_{v,w}
.$$
As a consequence, the cone spanned by the $T$-weights of $T_{vB}(X_w^v)$ is linearly isomorphic to the cone spanned by the $T$-weights of $\KL_{v,w}$. 
Since the variety on the left is an affine open neighborhood of $vB$ in $X_w^v$, \cite[Sec.~5]{Tymoczko} implies that the cone spanned by the $T$-weights of $T_{vB}(X_w^v)$ is equal to the
cone spanned by the edges of the moment polytope ${\sf Q}_{v,w}$ incident to the vertex $v$.
Now, the dimension of this cone is $\dim({\sf Q}_{v,w})$ and \Cref{cor_richi_KL} follows.
\end{rem}

Although we work with $G_{v,w}$ for the torus action on $\KL_{v,w}$, it is worth discussing another graph which is used to understand the torus action on $v\Omega_{\id}^\circ \cap X_w$ in \cite{leemasuda}. It is the graph related to the cone $D_w(v)$ of $T$-weights as in Remark~\ref{rem-LMgraph}. This graph coincides with~$G_{v,w}$ when $v=\id$ and we observe that in general $G_{v,w}$ can be obtained by deleting some edges from this graph as explained in Example~\ref{deletion}.

\begin{defn}\cite[Def.~7.1]{leemasuda}
We define the edge set
$$\widetilde{E}_w(v):= \{ (v(i) \to v(j)) \ | \ 1 \leq i < j \leq n, \ t_{v(i),v(j)} v \leq w \text{ and } |\ell(v) - \ell(t_{v(i),v(j)} v)| =1 \}.$$
and denote the set of indecomposable edges of $\widetilde{E}_w(v)$ by $E_w(v)$. We associate graphs $\Gamma_w(v)$ and $\widetilde{\Gamma}_w(v)$ to $E_w(v)$ and $\widetilde{E}_w(v)$, respectively.
\end{defn}

\begin{ex}\label{complex1KLex}
Let $w = 245163 \in S_6$ and $v= \id = 123456 \in S_6$. The dimension of the KL variety $\KL_{v,w} \cong \id \Omega_{\id}^\circ \cap X_w$ is $\ell(w)-\ell(v) = 6-0 =6$. The essential set of $D\degree(w)$ consists of the boxes labeled by $(3,1)$, $(6,4)$, $(5,2)$ and $(3,4)$. Therefore after imposing the Fulton conditions on $Z^{(v)}$, we obtain the second matrix

\[
\begin{bmatrix}
    1       & 0 & 0 & 0  & 0 & 0\\
    z_{21} & 1 & 0 & 0  &0 & 0\\
   z_{31}  & z_{32} & 1 & 0& 0 & 0  \\
  z_{41}       & z_{42} & z_{43} & 1 & 0 & 0 \\
  z_{51} & z_{52} & z_{53}& z_{54}& 1& 0 \\
   z_{61} & z_{62} & z_{63}& z_{64}& z_{65}&1 
\end{bmatrix}
\qquad
\begin{bmatrix}
    1       & 0 & 0 & 0  & 0 & 0\\
    z_{21} & 1 & 0 & 0  &0 & 0\\
  0   & z_{32} & 1 & 0& 0 & 0  \\
  0       & z_{42} & z_{43} & 1 & 0 & 0 \\
  0 & 0 & z_{53}& z_{54}& 1& 0 \\
   0 & 0 & 0 & 0& z_{65}&1 
\end{bmatrix}
\] 

where $ z_{42} z_{54} + z_{32} z_{53} - z_{54} z_{43} z_{32}=0$. The graphs $\widetilde{G_{v,w}}$ and $\widetilde{\Gamma}_w(v)$ are the same, since $\id$ has no inversions. The dimension of $\sigma_{v,w}$ is 5 and  $\KL_{v,w}$ is a a $T$-variety of complexity-1.

\begin{center}
\begin{tikzpicture}[baseline=1,scale=1,every path/.style={>=latex},every node/.style={draw,circle,fill=white,scale=0.6}]

  \node           (b) at (2,0)  {\bf{3}};
  \node           (f) at (1,0)  {\bf{2}};
  \node           (g) at (0,0)  {\bf{1}};
  \node      (c) at (4,0) {\bf{5}};
  \node        (d) at (3,0) {\bf{4}};
  \node         (h) at (5,0)  {\bf{6}};
  \node   (l) [fill=none,draw=none,scale=1.5] at (2.5,-0.5)  {$\widetilde{G_{v,w}}$};

 \draw[->] (g) edge (f);
  \draw[->] (f) edge (b);
   \draw[->] (b) edge (d);
 \draw[<-] (c) edge (d);
  \draw[<-] (h) edge (c);

 \draw[->] (f) .. controls (2,0.45) and (2,0.55) .. (d);

 \draw[->] (b) .. controls (3,0.45) and (3,0.55) .. (c);

\end{tikzpicture}
\qquad
\begin{tikzpicture}[baseline=1,scale=1,every path/.style={>=latex},every node/.style={draw,circle,fill=white,scale=0.6}]

  \node           (b) at (2,0)  {\bf{3}};
  \node           (f) at (1,0)  {\bf{2}};
  \node           (g) at (0,0)  {\bf{1}};
  \node      (c) at (4,0) {\bf{5}};
  \node        (d) at (3,0) {\bf{4}};
  \node         (h) at (5,0)  {\bf{6}};
    \node   (l) [fill=none,draw=none,scale=1.5] at (2.5,-0.5)  {$G_{v,w}=\Gamma_w(v)$};

 \draw[->] (g) edge (f);
  \draw[->] (f) edge (b);
   \draw[->] (b) edge (d);
 \draw[<-] (c) edge (d);
  \draw[<-] (h) edge (c);

\end{tikzpicture}
\end{center}
\end{ex}

\begin{rem}\label{rem_delete_edges}
Our method of drawing the graph associated to the weight cone is to place the vertices, corresponding to coordinates on~$T$, on a line, numbered $1,\ldots,n$ from left to right. A~useful observation is that a weight corresponding to a non-inversion, i.e.\ coming from the KL variety $\KL_{v,w}$, is a right pointing arrow, and a weight corresponding to an inversion, i.e.\ coming from the affine part $\mathbb{C}^{\ell(v)}$ in the isomorphism~(\ref{klisomorp}), is a left pointing arrow.
Thus, to obtain $G_{v,w}$ from $\widetilde{\Gamma}_w(v)$, we delete all left pointing arrows, however this does not work if one deletes edges from ${\Gamma}_w(v)$ to obtain $G_{v,w}$. Example~\ref{deletion} explains why one should be careful with this operation.
\end{rem}

The following is an interesting fact which motivated us to define $G_{v,w}$ with the result of Theorem~\ref{permittingdeletingedges} instead of deleting edges from $\widetilde{\Gamma}_w(v)$ or $\Gamma_w(v)$, where one needs to be careful not to loose the edges corresponding to $G_{v,w}$. 

\begin{prop}\label{decomposableparts}
Every decomposition of an edge of $\widetilde{E}_w(v)$ contains at least one edge from $E(\widetilde{G_{v,w}})$ and at least one edge from $\widetilde{E}_w(v)\setminus E(\widetilde{G_{v,w}})$. 
\end{prop}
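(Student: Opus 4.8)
The plan is to argue entirely inside the graph $\widetilde{\Gamma}_w(v)$, exploiting two features of every edge $(a\to b)\in\widetilde{E}_w(v)$: that $v^{-1}(a)<v^{-1}(b)$ (since it has the form $(v(i)\to v(j))$ with $i<j$), and that it satisfies the covering condition $|l(v)-l(t_{a,b}v)|=1$. First I would record two bookkeeping facts. (a) Within $\widetilde{E}_w(v)$, the edges lying in $E(\widetilde{G_{v,w}})$ are exactly the right-pointing (non-inversion) ones, i.e. those $(a\to b)$ with $a<b$. Indeed, unwinding Definition~\ref{def_opposite_Rothe} together with Lemma~\ref{lem-weights} gives $E(\widetilde{G_{v,w}})=\{(a\to b)\mid a<b,\ v^{-1}(a)<v^{-1}(b),\ t_{a,b}v\le w\}$, so a right-pointing edge of $\widetilde{E}_w(v)$ automatically lies in $E(\widetilde{G_{v,w}})$, while a left-pointing edge $(a\to b)$ with $a>b$ is an inversion and never does. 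Thus ``contains an edge of $E(\widetilde{G_{v,w}})$'' means ``contains a right-pointing edge'' and ``contains an edge of $\widetilde{E}_w(v)\setminus E(\widetilde{G_{v,w}})$'' means ``contains a left-pointing edge''. (b) Along any directed path in $\widetilde{\Gamma}_w(v)$, the $v$-positions of the visited vertices strictly increase, because each edge $(v(i)\to v(j))$ has $i<j$.

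The crux is to translate the covering condition into a constraint on the intermediate vertices of a decomposition. Here I would invoke the standard inversion count $|l(v)-l(t_{a,b}v)|=1+2\,\#\{c\mid \min(a,b)<c<\max(a,b),\ v^{-1}(a)<v^{-1}(c)<v^{-1}(b)\}$, so that $|l(v)-l(t_{a,b}v)|=1$ is equivalent to the absence of any value $c$ strictly between $a$ and $b$ whose position lies strictly between $v^{-1}(a)$ and $v^{-1}(b)$. Now, given $e=(a\to b)\in\widetilde{E}_w(v)$ and a (nontrivial) decomposition $a=u_0\to u_1\to\cdots\to u_m=b$ with $m\ge 2$, fact (b) yields $v^{-1}(a)=v^{-1}(u_0)<\cdots<v^{-1}(u_m)=v^{-1}(b)$, so every intermediate vertex $u_k$ with $1\le k\le m-1$ has $v^{-1}(u_k)$ strictly between $v^{-1}(a)$ and $v^{-1}(b)$; the covering condition on $e$ then forces $u_k\notin(\min(a,b),\max(a,b))$.

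I would finish with a short sign analysis in two symmetric cases. If $a<b$ (so $e$ is right-pointing), the net change $b-a>0$ telescoped along the path forces at least one ascending edge $u_{k-1}<u_k$, which is right-pointing and hence in $E(\widetilde{G_{v,w}})$ by (a); moreover, if every path edge were ascending we would get $a=u_0<u_1<u_m=b$, contradicting $u_1\notin(a,b)$, so at least one edge descends and is therefore left-pointing, lying in $\widetilde{E}_w(v)\setminus E(\widetilde{G_{v,w}})$. The case $a>b$ is entirely symmetric, with the roles of ascending and descending edges exchanged. In either case the decomposition contains both a required edge of $E(\widetilde{G_{v,w}})$ and one of $\widetilde{E}_w(v)\setminus E(\widetilde{G_{v,w}})$.

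The main obstacle I anticipate is conceptual rather than computational: pinning down the two dictionaries exactly, namely that $E(\widetilde{G_{v,w}})\cap \widetilde{E}_w(v)$ is precisely the set of non-inversion (right-pointing) edges, and the equivalence between the covering condition $|l|=1$ and the emptiness of the ``rectangle'' of intervening values. Once these are established, the remaining combinatorics is a one-line telescoping-plus-monotonicity argument, and no genuine estimate is needed; one only has to be careful that a ``decomposition'' is a directed path of length at least $2$, so that at least one intermediate vertex exists and the covering constraint can be applied.
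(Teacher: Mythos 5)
Your proof is correct and rests on the same two ingredients as the paper's: positions $v^{-1}(u_k)$ strictly increase along any directed path in $\widetilde{\Gamma}_w(v)$, and the covering condition $|l(v)-l(t_{a,b}v)|=1$ empties the "rectangle", forcing every intermediate vertex outside the value interval $(\min(a,b),\max(a,b))$, so the path cannot be monotone in values. The only difference is presentational: the paper reduces without loss of generality to a two-edge decomposition and checks that its two pieces cannot both be inversions or both be non-inversions, whereas you carry out the telescoping argument for a decomposition of arbitrary length and make explicit the dictionary identifying $E(\widetilde{G_{v,w}})\cap\widetilde{E}_w(v)$ with the right-pointing (non-inversion) edges.
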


\begin{proof}
 Suppose that there exists a decomposable edge $ (v(a) \to v(c)) \in \widetilde{E}_w(v)$ where $(a,c)$ is an inversion of $v$, i.e.\ $a<c$, $v(a)>v(c)$. Without loss of generality assume that $ (v(a) \to v(c)) = (v(a) \to v(b))  +  (v(b) \to  v(c))$. If both edges $ (v(a) \to v(b))$ and $(v(b) \to v(c))$ in $\widetilde{E}_w(v)$ correspond to the torus action on $\CC^{\ell(v)}$, then $v(b) \in [v(c), v(a)]$ which contradicts with the condition $|\ell(v) - \ell(t_{v(i),v(j)} v)| =1$. For the case where $ (v(a) \to v(c)) \in \widetilde{E}_w(v)$ corresponds to the torus action on $\KL_{v,w}$, the same argument follows.    
\end{proof}

\begin{ex} \label{deletion}
Consider the KL variety $\KL_{v,w}$ with $v=s_4=12354$ and $w=13542$. By Definition~\ref{klgraph}, one obtains $E(\widetilde{G_{v,w}})=E(G_{v,w})$ as in the figure.

\begin{center}
\begin{tikzpicture}[scale=1.8,every path/.style={>=latex},every node/.style={draw,circle,fill=white,scale=0.6}]

  \node           (b) at (2,0)  {\bf{5}};
  \node           (f) at (1,0)  {\bf{3}};
  \node           (g) at (0,0)  {\bf{1}};
  \node         (c) at (1.5,0) {\bf{4}};
  \node       (d) at (0.5,0) {\bf{2}};

    \draw[<-] (c) edge (f);
    \draw[->]  (f).. controls (1.45,0.3) and (1.55,0.3) .. (b);

  \draw[<-] (f) edge (d);

\end{tikzpicture}

\end{center}
In particular, this can be also achieved by deleting the edge $ (5 \to 4)$ from the edge set $\widetilde{E_w}(v)$, which is the edge corresponding to the torus action on $\CC^1$. However, working with $\Gamma_w(v)$ causes loosing the edge $(3 \to 4)$ which corresponds to $z_{43}$ of $Z^{(v)}$.
\begin{center}
\begin{tikzpicture}[scale=1.8,every path/.style={>=latex},every node/.style={draw,circle,fill=white,scale=0.6}]

  \node           (b) at (2,0)  {\bf{5}};
  \node           (f) at (1,0)  {\bf{3}};
    \node           (fi) [draw=none, fill =none, scale =1.5] at (1,-0.5)  {$\widetilde{\Gamma}_w(v)$};
  \node           (g) at (0,0)  {\bf{1}};

  \node         (c) at (1.5,0) {\bf{4}};
  \node       (d) at (0.5,0) {\bf{2}};

 \draw[<-] (c) edge (b);
    \draw[<-] (c) edge (f);
    \draw[->]  (f).. controls (1.45,0.3) and (1.55,0.3) .. (b);

  \draw[<-] (f) edge (d);

\end{tikzpicture}
\qquad
\begin{tikzpicture}[scale=1.8,every path/.style={>=latex},every node/.style={draw,circle,fill=white,scale=0.6}]

  \node           (b) at (2,0)  {\bf{5}};
  \node           (f) at (1,0)  {\bf{3}};
  \node           (g) at (0,0)  {\bf{1}};
  \node           (fi) [draw=none, fill =none, scale =1.5] at (1,-0.5)  {$\Gamma_w(v)$};
  \node         (c) at (1.5,0) {\bf{4}};
  \node       (d) at (0.5,0) {\bf{2}};

 \draw[<-] (c) edge (b);

    \draw[->]  (f).. controls (1.45,0.3) and (1.55,0.3) .. (b);

  \draw[<-] (f) edge (d);

\end{tikzpicture}
\end{center}

\end{ex}

To summarize, the aim of this section is to develop combinatorial which can be used to compute the complexity of the usual torus action on a KL variety $\KL_{v,w}$. This is realized in particular in Lemma~\ref{lem-weights} and Theorem~\ref{permittingdeletingedges}, which describe the weight cone $\sigma_{v,w}$, and Corollary~\ref{cor_dim_formulas}, which relates the complexity of considered action to the structure of the weight cone. In Section~\ref{lowcomplexityexamples}, we approach the problem of finding and studying low-complexity KL varieties using the results of this section and also different methods related to the properties of permutations.
An application of results of Section~\ref{sec: klvarieties} to determining the complexity of KL varieties and producing interesting sets of low-complexity examples will also be presented in a forthcoming paper of the authors.

\section{The complexity of KL varieties for $v$ of small length}\label{lowcomplexityexamples}

In this section, we investigate certain low complexity $T$-variety examples for $\KL_{v,w}$ with respect to the usual torus action. Throughout this section, a different approach with reduced word expressions of permutations is introduced. First, we describe some cases where $\KL_{v,w}$ is toric. After that, we consider the cases in which $v$ has length 0 or 1, and study the complexity in terms of the shape of a reduced word expression of $w$ in Theorems~\ref{visid} and \ref{vistransp}. This section ends with motivating observations for other forms of $v$, in particular we explain why studying the complexity for a general $v$ and $w$ using reduced word expressions is challenging.

\subsection{Toric KL varieties}

We begin with a condition on $\widetilde{G_{v,w}}$ that guarantees that $\KL_{v,w}$ is toric.
\begin{prop}\label{smoothtoricKL}
The KL~variety $\KL_{v,w}$ is toric, if $\widetilde{G_{v,w}}$ is a forest. In this case, $\KL_{v,w}$ is an affine space.
\end{prop}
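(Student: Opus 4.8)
The plan is to prove the stronger assertion that $\KL_{v,w}$ is an \emph{affine space}; toricity then follows, since affine space is toric. Recall that $\KL_{v,w}=X_w\cap\Omega_v^\circ$ is a nonempty open subvariety of the irreducible Schubert variety $X_w$, hence itself irreducible, of dimension $l(w)-l(v)$ by~\eqref{dimKL}. The strategy is a dimension sandwich: I will first exhibit a closed embedding $\KL_{v,w}\hookrightarrow\CC^{|E|}$, where $E=E(\widetilde{G_{v,w}})$, giving $\dim\KL_{v,w}\le|E|$; then I will show that under the forest hypothesis $\dim\sigma_{v,w}=|E|$, which together with the general inequality $\dim\sigma_{v,w}\le\dim\KL_{v,w}$ forces all three quantities to coincide. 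Irreducibility will then upgrade the embedding to an isomorphism.

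First I would set up the embedding. By Lemma~\ref{lem-weights}, a free coordinate $z_{ij}$ of $\Sigma_v$ (that is, $(i,j)\in\oDia(v)$) either satisfies $t_{v(j),i}v\le w$, in which case it indexes an edge of $\widetilde{G_{v,w}}$, or it is an unexpected $0$ and hence vanishes on all of $\KL_{v,w}$. Since $\CC[\KL_{v,w}]$ is a quotient of $\CC[z_{ij}\mid(i,j)\in\oDia(v)]$, it is therefore already generated by the $|E|$ coordinates indexing edges of $\widetilde{G_{v,w}}$. This produces a closed embedding $\KL_{v,w}\hookrightarrow\CC^{|E|}$ and the bound $\dim\KL_{v,w}\le|E|$.

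Next I would compute $\dim\sigma_{v,w}$ under the forest hypothesis. In a forest there is at most one directed path between any two vertices, so every edge is indecomposable and hence $\widetilde{G_{v,w}}=G_{v,w}$. By Lemma~\ref{dimofcone} (equivalently~\eqref{eq-KL-cone-dim}), $\dim\sigma_{v,w}$ equals the number of vertices minus the number of connected components of $\widetilde{G_{v,w}}$, and for a forest this equals the number of edges $|E|$ (isolated vertices contribute equally to both counts, and a tree with $m$ edges has $m+1$ vertices). On the other hand, by the discussion in Section~\ref{sec-general-t-varieties} the complexity is nonnegative, so $\dim\sigma_{v,w}\le\dim\KL_{v,w}$. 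Stringing the inequalities together yields
\[
|E|=\dim\sigma_{v,w}\le\dim\KL_{v,w}\le|E|,
\]
whence $\dim\KL_{v,w}=|E|$.

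Finally I would conclude: as $\KL_{v,w}$ is an irreducible closed subvariety of $\CC^{|E|}$ of full dimension $|E|$, it must equal all of $\CC^{|E|}$, so $\KL_{v,w}\cong\CC^{|E|}$ is an affine space. I expect the main obstacle to be the bookkeeping in the embedding step, namely cleanly arguing that the unexpected $0$s account for \emph{all} vanishing coordinates so that the surviving $|E|$ variables genuinely cut out a closed embedding; the forest hypothesis is precisely what guarantees, through the dimension count, that no relations among these variables can survive. As an alternative to the sandwich, one could instead observe that a forest has no cycles, so by Theorem~\ref{thm-primitive} the edge ideal of $\widetilde{G_{v,w}}$ is trivial and the general orbit closure $\overline{T\cdot p}=\TV(\widetilde{G_{v,w}})\cong\CC^{|E|}$ is an affine space; matching dimensions then identifies this orbit closure with the irreducible $\KL_{v,w}$.
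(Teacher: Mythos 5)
Your proposal is correct and follows essentially the same route as the paper: both arguments establish the dimension sandwich $|E|=\dim\sigma_{v,w}\le\dim\KL_{v,w}\le|E|$ by combining the fact that the non-unexpected-zero coordinates embed $\KL_{v,w}$ into $\CC^{|E|}$ with the fact that a forest's edge cone has dimension equal to its number of edges. The only difference is cosmetic and lies in the last step: the paper identifies $\KL_{v,w}$ with the toric variety of the forest's edge cone and invokes Theorem~\ref{thm-primitive} to see that its defining ideal is zero, whereas you conclude directly from irreducibility and full dimension inside $\CC^{|E|}$ --- and the alternative you sketch at the end is precisely the paper's argument.
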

\begin{proof}
If $\widetilde{G_{v,w}}$ is a forest, then the dimension of $\sigma_{v,w}$ is the number of edges of $\widetilde{G_{v,w}}$, which is equal to the number of entries in $Z^{(v)}$ that are not unexpected zeros. It follows that $\dim(\sigma_{v,w})= \dim(\Sigma_v)\ge \dim(\KL_{v,w})$ and hence these have to be equal and $\KL_{v,w}$ is the toric variety of $\sigma_{v,w}$. By Theorem~\ref{thm-primitive}, the ideal defining this toric variety is the zero ideal.
We conclude that $\KL_{v,w}$ is an affine space.
\end{proof}

\begin{ex}\label{toricKL}
Consider the KL~variety $\KL_{v,w}\cong \CC^{5}$ from  Example~\ref{ex_rectangle_unexp0} with $w =47681352\in S_8$ and $v = 32187654\in S_8$. The graph $G_{v,w}$=$\widetilde{G_{v,w}}$ is a tree with five edges. 
\begin{center}
\begin{tikzpicture}[scale=1.5,every path/.style={>=latex},every node/.style={draw,circle,fill=white,scale=0.6}]

  \node           (b) at (1,0)  {\bf{3}};
  \node           (f) at (0.5,0)  {\bf{2}};
  \node           (g) at (0,0)  {\bf{1}};
  \node        (c) at (2.5,0) {\bf{6}};
  \node       (i) at (2,0) {\bf{5}};
  \node       (d) at (1.5,0) {\bf{4}};
  \node          (h) at (3,0)  {\bf{7}};
  \node         (j) at (3.5,0) {\bf{8}};

  \draw[->] (b) edge (d);
  \draw[->] (f) .. controls (0.9,-0.5) and (1.1,-0.5) ..  (d);

  \draw[->] (f) .. controls (1.65,0.5) and (1.85,0.5) ..  (h);
  \draw[->] (f) .. controls (1.4,0.4) and (1.6,0.4) ..  (c);
  \draw[->] (g) .. controls (1.4,-0.4) and (1.6,-0.4) ..  (h);

\end{tikzpicture}

\end{center}
The dimension of the edge cone of $G_{v,w}$ is five and therefore $\KL_{v,w}$ is a toric variety.
\end{ex}

\noindent  Note that the other direction of Proposition~\ref{smoothtoricKL} is not true. 
\begin{ex}
Let $v = 2143 \in S_4$ and $w=4231 \in S_4$. The defining ideal of the KL~variety $\KL_{v,w}$ is generated by $z_{31}z_{42} - z_{32} z_{41}$ and the dimension of the variety is~3. On the other hand, the associated graph $G_{v,w}$ is $K_{2,2}$ with directed edges. Hence~$\KL_{v,w}$ is a toric variety but~$\widetilde{G_{v,w}}$ is not a forest.
\end{ex}

We remark that \cite[Prop.~4.12]{TsuWil} gives necessary and sufficient conditions for a Richardson variety to be toric in terms of a different graph. 
By \Cref{cor_richi_KL}, these conditions can be applied to characterize toric KL varieties.
This graph, defined in \cite[Def.~4.5]{TsuWil}, is more complicated than $\widetilde{G_{v,w}}$ since it requires giving a maximal chain $x_0\lessdot \cdots \lessdot x_m$ in the interval $[v,w]$ and computing $x_i^{-1}x_{i+1}$ for all $i$.
Also, \cite[Thm.~1.1]{LeeMasPar_toric_bruhat} gives a different criterion for a Richardson variety to be toric in terms of the face structure of the Bruhat interval polytope ${\sf Q}_{v,w}$.

For the remainder of this section, we look at the weight cone $\sigma_{v,w}$ of $\KL_{v,w}$ from a different point of view. Namely, in terms of reduced word expressions, by using some classical results on Bruhat order.
\begin{defn}
A \emph{simple reflection} $s_i \in S_n$ is the adjacent transposition $t_{i,i+1}$. Each permutation in $S_n$ can be written as a product of simple reflections. If the length of this product i.e.\ $\ell(w)$ is minimum among all such expressions, this product is called a \emph{reduced word expression}. 
\end{defn}

\begin{prop}\label{distinctsimplereflections}
If $w \in S_n$ is a product of distinct simple reflections, then the KL~variety $\KL_{v,w}$ is toric for all $v \leq w$.
\end{prop}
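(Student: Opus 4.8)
The plan is to prove that the graph $\widetilde{G_{v,w}}$ of Definition~\ref{klgraph} is a forest and then to invoke Proposition~\ref{smoothtoricKL}, which already yields that $\KL_{v,w}$ is toric (indeed an affine space). First I would record the consequence of the hypothesis: since $w$ is a product of distinct simple reflections, $w$ admits a reduced word in which every generator occurs at most once. By the subword characterization of the Bruhat order (Definition~\ref{def_Bruhat_mat}), every $v\le w$, and more generally every element of $[e,w]$, then also has a multiplicity-free reduced word; in particular every $u\le w$ is fully commutative and hence avoids the pattern $321$ (a $321$ pattern forces a braid $s_as_{a+1}s_a$, and thus a repeated generator, in every reduced word). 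By Lemma~\ref{lem-weights} the edges of $\widetilde{G_{v,w}}$ are the pairs $\{v(j),i\}$ with $(i,j)\in\oDia(v)$ and $t_{v(j),i}v\le w$; writing $p=v(j)<q=i$, each edge is a non-inversion $\{p,q\}$ of $v$ with $t_{p,q}v\le w$, carrying the weight $e_p-e_q$.

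The first step is to show that every edge is in fact a cover, i.e.\ $l(t_{p,q}v)=l(v)+1$. Using $t_{p,q}v=v\,t_{v^{-1}(p),v^{-1}(q)}$ one has
\[
 l(t_{p,q}v)-l(v)=1+2\,\#\{\,l : v^{-1}(p)<l<v^{-1}(q),\ p<v(l)<q\,\}.
\]
If this difference were at least $3$, there would be a position $l$ between $v^{-1}(p)$ and $v^{-1}(q)$ with $p<v(l)<q$; after transposing the values $p$ and $q$, the three positions $v^{-1}(p)<l<v^{-1}(q)$ of $t_{p,q}v$ would carry the strictly decreasing values $q,\,v(l),\,p$, so $t_{p,q}v$ would contain a $321$ pattern. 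This contradicts the fact that $t_{p,q}v\le w$ avoids $321$. Hence every edge of $\widetilde{G_{v,w}}$ is a cover, so $\widetilde{G_{v,w}}=G_{v,w}$, and by Theorem~\ref{permittingdeletingedges} its edges are exactly the covers $v\lessdot t_{p,q}v\le w$. Since $[v,w]$ is a boolean lattice, these covers are its atoms, so $\widetilde{G_{v,w}}$ has precisely $l(w)-l(v)=\dim(\KL_{v,w})$ edges.

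It remains to prove that these cover-edges span a cone of full dimension $l(w)-l(v)$, equivalently that the weights $e_p-e_q$ are linearly independent, equivalently (by Lemma~\ref{dimofcone}) that $\widetilde{G_{v,w}}$ is a forest. This acyclicity is the main obstacle. I would argue by contradiction from a chordless cycle $c_1-\cdots-c_m-c_1$ on the value vertices: each edge is an ascending non-inversion having no value strictly between its endpoints occurring between their positions, so examining the vertex of largest value in the cycle (both of whose neighbours must lie to its left in one-line position) and propagating the ``no intervening value'' condition around the cycle should force a $321$ or $3412$ pattern in $v$, and hence in $w$, contradicting that $w$ is boolean. Equivalently, the arcs attached to the cover-edges are non-crossing, and non-crossing ascending arcs cannot close into a cycle. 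Granting acyclicity, $\dim(\sigma_{v,w})=|V(\widetilde{G_{v,w}})|-\#(\text{components})=l(w)-l(v)=\dim(\KL_{v,w})$, so the action has complexity $0$; since KL varieties are normal, $\KL_{v,w}$ is toric. More cleanly, once $\widetilde{G_{v,w}}$ is a forest, Proposition~\ref{smoothtoricKL} applies verbatim and gives that $\KL_{v,w}$ is an affine space. The delicate point is exactly the cycle analysis in this last step; the reduction that every edge is a cover (so that $\widetilde{G_{v,w}}=G_{v,w}$ and the edge count is precisely $l(w)-l(v)$) is the clean part and isolates the whole difficulty in pure acyclicity.
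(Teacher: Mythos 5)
Your strategy is genuinely different from the paper's: the paper does not analyze $\widetilde{G_{v,w}}$ at all, but instead cites Karuppuchamy's theorem that $X_w$ is a toric variety precisely when $w$ is a product of distinct simple reflections, so that $v\Omega_{\id}^\circ\cap X_w\cong \KL_{v,w}\times\CC^{l(v)}$ is an affine toric chart, and then uses Proposition~\ref{decomposableparts} to split the torus action between the two factors, forcing $\KL_{v,w}$ itself to be toric. Your first half is correct and is a nice self-contained reduction: since every $u\le w$ inherits a multiplicity-free reduced word by the subword property (Theorem~\ref{subwordproperty}, not Definition~\ref{def_Bruhat_mat} as you cite), every such $u$ is fully commutative and $321$-avoiding, and your length formula then correctly shows that every edge of $\widetilde{G_{v,w}}$ is a cover; the identification of these edges with the atoms of $[v,w]$ and the count $|E|=l(w)-l(v)$ are also fine (though you assert rather than prove that $[v,w]$ is boolean, which does follow from the fact that distinct subwords of a multiplicity-free reduced word are distinct reduced words).

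However, there is a genuine gap at the decisive step: acyclicity of $\widetilde{G_{v,w}}$ is exactly what is needed (with $|E|=l(w)-l(v)$ fixed, a single cycle would give $\dim\sigma_{v,w}<\dim\KL_{v,w}$ and positive complexity), and you do not prove it — you write that the cycle analysis ``should force'' a $321$ or $3412$ and then proceed ``granting acyclicity.'' The heuristic you offer is not sound as stated: ``non-crossing ascending arcs cannot close into a cycle'' is false in general (the arcs $\{1,2\},\{2,3\},\{1,3\}$ are pairwise non-crossing and form a triangle), so ruling out cycles must use the booleanity of $w$ and not just the cover structure. Note also that the cover-edges of $v$ alone genuinely can form a cycle — for $v=1324$ the four covers $t_{1,2}v,\,t_{1,3}v,\,t_{2,4}v,\,t_{3,4}v$ give the $4$-cycle $1\!-\!2\!-\!4\!-\!3\!-\!1$ — so the argument must show that no boolean $w$ dominates all edges of a cycle (here, $s_2s_3\le w$ and $s_3s_2\le w$ would force a repeated letter). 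This is provable, e.g.\ by taking the maximal value $M$ in a putative cycle and showing its two neighbours $a<b<M$ yield $t_{a,M}v,\,t_{b,M}v\le w$ whose simultaneous domination contradicts $321$-avoidance of elements below $w$, but that analysis is the whole content of the proposition in your approach and is missing. As written, the proof is incomplete precisely where the difficulty lies; the paper's route via \cite{Karuppuchamy} and Proposition~\ref{decomposableparts} is what lets it avoid this combinatorics entirely.
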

\begin{proof}
By \cite[Thm.\ 2 and Thm.\ 4]{Karuppuchamy}, $X_w$ is a toric variety and hence $v\Omega_{id}^\circ\cap X_w \cong \KL_{v,w} \times \CC^{\ell(v)}$ is the affine toric variety corresponding an affine neighborhood of $T$-fixed point $vB$ in $X_w$. This also corresponds to a maximal cone (edge cone of $\widetilde{\Gamma}_w(v)$) of the normal fan of $X_w$ for all $v\leq w$. By Proposition~\ref{decomposableparts}, since every decomposition of an edge of $\widetilde{E}_w(v)$ contains at least one edge from $E(\widetilde{G_{v,w}})$ and at least one edge from $\widetilde{E}_w(v)\setminus E(\widetilde{G_{v,w}})$, the torus action $T$ on $v\Omega_{id}^\circ\cap X_w$ factors through two torus actions on $\KL_{v,w}$ and $\CC^{\ell(v)}$. This concludes the proof. 
\end{proof}
Note that $\KL_{v,w}$ being toric does not always imply that $X_w$ is also toric. 

\begin{ex} \label{length1example}
Consider the KL~variety $\KL_{v,w}$ with $v=s_4=12354$ and $w=13542$. 
In this case we have that $\widetilde{G_{v,w}}$ is a tree and thus $\KL_{v,w}$ is toric. On the other hand, $X_w$ is not toric because $w=s_2 s_4 s_3 s_4$, i.e.~it is not the product of distinct simple reflections, see \cite{Karuppuchamy}.
\end{ex}

Our next approach is to fix $v \in S_n$ and change $w \in S_n$ according to its length. The main method for our proofs is motivated by the following theorem, called the \emph{subword property}. 

\begin{thm}\cite[Thm.~2.2.2]{bjoernerbrenti}\label{subwordproperty}
Let $w = s_{i_1} \ldots s_{i_q}$ be a reduced word expression of $w$. Then, $u \leq w$ if and only if there exists a reduced word expression of $u=s_{i_{j_1}} \ldots s_{i_{j_k}}$ such that $1 \leq j_1 < \ldots < j_k \leq q$. 

\end{thm}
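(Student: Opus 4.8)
The plan is to treat this as the classical \emph{subword property} of the Bruhat order and to reproduce the standard reflection-theoretic proof, running an induction on the word length $q=\ell(w)$ separately for each implication. The one nontrivial input is the \emph{Lifting Property} of the Bruhat order in its symmetric form: for any $u\le w$ and any simple reflection $s$, writing $\min(x,xs)$ and $\max(x,xs)$ for the Bruhat-smaller and Bruhat-larger of the length-adjacent pair $\{x,xs\}$, one has
\[
\min(u,us)\le\min(w,ws)\qquad\text{and}\qquad \max(u,us)\le\max(w,ws).
\]
I would invoke this as a black box; it is a consequence of the Strong Exchange Condition, and I expect its proof (rather than anything in the induction below) to be the real content, which is precisely why deferring to \cite{bjoernerbrenti} here is reasonable.

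For the implication ``reduced subword $\Rightarrow u\le w$'' I would induct on $q$. Write $s=s_{i_q}$ and $w'=s_{i_1}\cdots s_{i_{q-1}}$, a reduced word of length $q-1$ with $w=w's$, $\ell(w)=\ell(w')+1$, and $w'<w$. Given a reduced subword $u=s_{i_{j_1}}\cdots s_{i_{j_k}}$, I split on the last index. If $j_k<q$ then $u$ is a reduced subword of $w'$, so $u\le w'<w$ by the inductive hypothesis and transitivity. If $j_k=q$ then $u=u's$ with $u'=s_{i_{j_1}}\cdots s_{i_{j_{k-1}}}$ a reduced subword of $w'$; the inductive hypothesis gives $u'\le w'$, and since $u's=u>u'$ and $w's=w>w'$ the Lifting Property yields $u=\max(u',u's)\le\max(w',w's)=w$.

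For the converse ``$u\le w\Rightarrow$ reduced subword'' I would again induct on $q$, keeping $s=s_{i_q}$ and $w'=ws<w$. If $u=w$ take the whole word, so assume $u<w$ and split on whether $s$ is a right descent of $u$. If $us>u$, then $u=\min(u,us)\le\min(w,ws)=w'$, so by the inductive hypothesis $u$ is a reduced subword of $s_{i_1}\cdots s_{i_{q-1}}$, hence of $w$. If $us<u$, then $us=\min(u,us)\le\min(w,ws)=w'$; the inductive hypothesis makes $us$ a reduced subword of $s_{i_1}\cdots s_{i_{q-1}}$, and since $\ell(u)=\ell(us)+1$, appending the final letter $s_{i_q}=s$ exhibits $u=(us)s$ as a reduced subword of $w$. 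This closes the induction.

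Beyond the Lifting Property, the only point I would flag is foundational: the paper defines the Bruhat order on $S_n$ through the rank inequalities $\rank_v(a,b)\le\rank_w(a,b)$ of Definition~\ref{def_Bruhat_mat}, whereas the argument above is phrased with the chain/reflection order. To be fully self-contained I would either cite the standard equivalence of these two orders on the symmetric group, or verify the Lifting Property directly for the rank order; I expect the former to be cleaner, and it is exactly the role played by the reference to \cite{bjoernerbrenti}.
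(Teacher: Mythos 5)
Your argument is correct, and both implications of the induction check out: the symmetric form of the Lifting Property you state is a valid consequence of the Strong Exchange Condition, the prefix of a reduced word is reduced (so $u'$ really is a reduced subword of $w'$ in the forward direction), and in the converse direction $s_{i_q}$ is a right descent of $w$ precisely because the given word is reduced, so $w'=ws$ has length $q-1$ as needed. There is, however, nothing in the paper to compare against: Theorem~\ref{subwordproperty} is quoted verbatim from \cite[Thm~2.2.2]{bjoernerbrenti} and no proof is given. Your route is one of the standard ones; the proof actually printed in Bj\"orner--Brenti proceeds differently, working directly from the reflection-chain definition of Bruhat order and the Strong Exchange Condition (deleting one letter of a reduced word at a time), and in that book the Lifting Property (their Prop.~2.2.7) is \emph{derived from} the subword property. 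So the one caveat worth adding to your write-up is to avoid circularity: if you black-box the Lifting Property, you should source it from a development that proves it independently via the Strong Exchange Condition (as in Humphreys or Deodhar), not from \cite{bjoernerbrenti} itself. Your final remark about reconciling the rank-matrix definition of Bruhat order in Definition~\ref{def_Bruhat_mat} with the reflection-order definition is well taken and is indeed the other standard fact one must cite.
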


\subsection{Case $v=id$}
Let us start with the easiest case $\ell(v)=0$, i.e.\ $v=\id$. As we observed in Example~\ref{complex1KLex}, even in this case one can obtain interesting cases. Moreover, studying $X_w$ locally at the fixed point $\id B$ can be accomplished by studying the KL~variety $\KL_{\id,w}$. Notice that for $v=\id$ the condition $v \leq w$ holds for any $w\in S_n$.

\begin{thm}\label{visid}
The usual torus action on the KL~variety $\KL_{\id,w}$ is of complexity-$k$ if and only if 
    $$
    |\{s_i \mid s_i \le w\}|
    =\ell(w)-k.
    $$
\end{thm}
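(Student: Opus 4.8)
The plan is to read off the complexity directly from the graph-theoretic description of the weight cone developed in Section~\ref{interactingwithsimpledirectedgraphs}. By~\eqref{eq-KL-complexity} together with the dimension formula~\eqref{dimKL} (recall $\ell(\id)=0$), the complexity of $\KL_{\id,w}$ equals $\ell(w)-\dim(\sigma_{\id,w})$. Hence it suffices to establish
$$
\dim(\sigma_{\id,w}) = |\{s_i \mid s_i\le w\}|,
$$
after which the asserted equivalence is immediate.

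First I would identify the graph $G_{\id,w}$ explicitly. Specializing~\eqref{eq-edges-kl-graph} to $v=\id$, so that $v(j)=j$, gives
$$
E(G_{\id,w}) = \{(j\to i) \mid i>j,\ t_{j,i}\le w,\ \ell(t_{j,i})=1\}.
$$
The key observation is the length formula for a transposition: for $j<i$ one has $\ell(t_{j,i})=2(i-j)-1$, so the condition $\ell(t_{j,i})=1$ forces $i=j+1$ and therefore $t_{j,i}=s_j$. Consequently the edges of $G_{\id,w}$ are exactly the arrows $(j\to j+1)$ with $s_j\le w$, and in particular $|E(G_{\id,w})| = |\{s_i \mid s_i\le w\}|$.

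Next I would exploit the resulting shape of the graph. Since every edge joins consecutive integers, $G_{\id,w}$ is a subgraph of the path $1-2-\cdots-n$ and is thus a forest. For a forest the rank of the incidence matrix equals the number of edges; equivalently, in Lemma~\ref{dimofcone} the quantity $|V|-\#(\text{components})$ coincides with the number of edges when there are no cycles. Therefore~\eqref{eq-KL-cone-dim} yields
$$
\dim(\sigma_{\id,w}) = |E(G_{\id,w})| = |\{s_i \mid s_i\le w\}|.
$$
Combining this with $\dim(\KL_{\id,w})=\ell(w)$ gives that the complexity equals $\ell(w)-|\{s_i \mid s_i\le w\}|$, which is precisely the stated criterion.

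I do not expect a genuine obstacle here: the only substantive input is the length formula for transpositions, which collapses the length-one condition to simple reflections, followed by the elementary fact that a subgraph of a path is a forest. The remaining work is bookkeeping with the dimension formulas already proved. The single point deserving a touch of care is that the edge-cone dimension is insensitive to isolated vertices (each contributes equally to the vertex count and to the component count), so that the count reduces cleanly to the number of edges irrespective of how the ambient vertex set $V(G_{\id,w})$ is chosen.
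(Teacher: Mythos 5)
Your proposal is correct and follows essentially the same route as the paper: identify $E(G_{\id,w})$ as the arrows $(a\to a+1)$ with $s_a\le w$, observe the graph is a forest so the edge-cone dimension equals the edge count, and subtract from $\dim(\KL_{\id,w})=\ell(w)$. The only cosmetic difference is that you justify the restriction to adjacent transpositions via the length formula $\ell(t_{j,i})=2(i-j)-1$ and stop at the count $|\{s_i\mid s_i\le w\}|$, whereas the paper additionally invokes the subword property to rephrase this as the number of distinct simple reflections in a reduced word; both are fine.
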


\begin{proof}
The KL~variety $\KL_{\id,w}$ has dimension $\ell(w)$. By \eqref{eq-edges-kl-graph}, the graph for the usual torus action on $\KL_{\id,w}$ has edges
    $E(G_{\id,w}) = \{ (a \to a+1) \ | \ t_{a, a+1} \id \leq w\}$
and $G_{\id,w}$ is a forest. 
It follows that $\dim(\sigma_{v,w})=|E(G_{\id,w})|$.
By the subword property, $ (a\to a+1)\in E(G_{\id,w})$ if and only if there exists a reduced expression for $w$ containing $s_a$.
Therefore, $\dim(\sigma_{v,w})$ equals the number of distinct simple reflections in a reduced expression of $w$. Since the dimension of the KL~variety is $\ell(w)$, this gives a complexity-$k$ usual torus action if and only if this number is $\ell(w)-k$. 
\end{proof}

\begin{ex}\label{ex: not distinct transp}
Consider Example~\ref{complex1KLex}, where $v=\id$ and $w=245163$. A reduced expression of $w$ is $s_1 s_3 s_2 s_4 s_3 s_5$. It consists of all simple reflections with $s_3$ repeated once. Hence $\KL_{v,w}$ admits a complexity-1 torus action. Moreover, $G_{\id,w}$ is a tree with 6 vertices and thus $\KL_{v,w}$ is an affine space.  
\end{ex}

As a corollary of this result, we can characterize the $w$ for which $\KL_{\id,w}$ is toric. 

\begin{cor}
$\KL_{\id,w}$ is toric if and only if $w$ is a product of distinct simple reflections. Moreover, in this case $\KL_{\id,w}$ is an affine space.
\end{cor}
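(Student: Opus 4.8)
The plan is to read this off directly from Theorem~\ref{visid}. A $T$-variety is toric precisely when its complexity is $0$, so I would set $k=0$ there: the variety $\KL_{\id,w}$ is toric if and only if $|\{s_i \mid s_i \le w\}| = \ell(w)$.

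Next I would translate the left-hand side combinatorially, exactly as in the proof of Theorem~\ref{visid}. By the subword property (Theorem~\ref{subwordproperty}), $s_i \le w$ holds if and only if some reduced word for $w$ contains the letter $s_i$, so $|\{s_i \mid s_i \le w\}|$ is the number of \emph{distinct} simple reflections appearing in a reduced expression of $w$. On the other hand $\ell(w)$ is the \emph{total} number of letters in any reduced expression. These two numbers coincide if and only if no simple reflection is repeated, i.e.\ if and only if $w$ is a product of distinct simple reflections. This yields the stated equivalence.

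For the \emph{moreover} statement I would use the fact, already recorded in the proof of Theorem~\ref{visid}, that $G_{\id,w}$ is a forest: its edges all have the form $(a \to a+1)$, so it is a subgraph of the path $1 - 2 - \cdots - n$ and hence acyclic. A forest has no primitive cycles, so by Theorem~\ref{thm-primitive} the edge ideal $\Kerne\Phi_{G_{\id,w}}$ is the zero ideal and $\TV(G_{\id,w}) = \CC^{|E(G_{\id,w})|}$. When $\KL_{\id,w}$ is toric it coincides with the closure of a generic torus orbit, which is the toric variety of its weight cone $\sigma_{\id,w} = \sigma_{G_{\id,w}}^\vee$, namely $\TV(G_{\id,w})$. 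Therefore $\KL_{\id,w} \cong \CC^{|E(G_{\id,w})|}$, an affine space whose dimension is $|\{s_i \mid s_i \le w\}| = \ell(w)$.

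Since each step is a direct specialization of results already in hand, I do not anticipate a genuine obstacle; the only point requiring slight care is justifying that a toric $\KL_{\id,w}$ is literally isomorphic to $\TV(G_{\id,w})$ rather than merely sharing its weight cone, which follows from the complexity-$0$ discussion in Section~\ref{sec-general-t-varieties}.
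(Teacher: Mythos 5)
Your proof is correct and follows essentially the same route as the paper: specialize Theorem~\ref{visid} to $k=0$, identify $|\{s_i \mid s_i \le w\}|$ with the number of distinct letters in a reduced word via the subword property, and use that $G_{\id,w}$ is a forest together with Theorem~\ref{thm-primitive} to conclude $\KL_{\id,w}$ is an affine space. The only cosmetic difference is that the paper cites Proposition~\ref{distinctsimplereflections} for the ``if'' direction, whereas you extract both directions directly from the biconditional in Theorem~\ref{visid}; both are valid.
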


\begin{proof}
Proposition~\ref{distinctsimplereflections} shows that $\KL_{\id,w}$ is toric if $w$ is a product of distinct simple reflections.
Now suppose that $\KL_{\id,w}$ is toric.
The fact that $w$ is a product of distinct simple reflections immediately follows from Theorem~\ref{visid}, since
    $$|\{s_i \mid s_i \le w\}|
    =\ell(w)
    .$$
Note that this implies also that $G_{\id,w}$ is a forest.
It follows from Theorem~\ref{thm-primitive} that $\KL_{\id,w}$ is an affine space.
\end{proof}

This corollary is recovering a known result. The Richardson variety $X_w^{\id}$ is precisely the Schubert variety $X_w$ so by \cite[Thm.~2 and Thm.~4]{Karuppuchamy} $X_w=X_w^{\id}$ is a toric variety if and only if $w$ is a product of simple reflections.
Applying \Cref{cor_richi_KL} this is equivalent to the corollary above. We remark that \cite[Thm.~1.1]{leemasudapark} has other equivalent conditions characterizing toric Schubert varieties.
Also, \cite[Thm.~1.2 and Thm.~1.3]{leemasudapark} give equivalent conditions characterizing complexity-1 Schubert varieties.

The permutation $w=245163$ in Example~\ref{ex: not distinct transp} is a so-called irreducible permutation and it turns out to be interesting to study this class of permutations.

A permutation $w \in S_n$ is called \emph{reducible} if there is $j<n$ such that 
    \begin{equation}\label{eq-reducible}
    \{w(1),\ldots,w(j)\} = \{1,\ldots,j\},
    \end{equation}
that is, the one-line notation for $w$ starts with a permutation of $\{1,\ldots,j\}$. Otherwise $w$ is called \emph{irreducible}. These permutations were introduced by Comtet in \cite{comtet} and sometimes called indecomposable as well.

\begin{prop}\label{irreducibleperm}
Let $v = \id$. Then $w$ is irreducible if and only if $G_{v,w}$ is connected, i.e.\ the dimension of $\sigma_{v,w}$ is $n-1$, as large as possible.
\end{prop}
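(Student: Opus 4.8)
The plan is to prove the contrapositive in both directions, relating reducibility of $w$ to a disconnection of the graph $G_{\id,w}$ whose edges are $\{(a \to a+1) \mid s_a \le w\}$ (using $v=\id$ in \eqref{eq-edges-kl-graph}, so each edge connects consecutive integers). Since the vertex set is $[n]$ and every edge joins $a$ to $a+1$, the graph $G_{\id,w}$ is a subgraph of the path $1 - 2 - \cdots - n$. Such a subgraph is connected precisely when all $n-1$ possible edges are present, i.e. when $s_a \le w$ for every $a \in [n-1]$; it is disconnected exactly when some edge $(j \to j+1)$ is missing, i.e. when there is a $j < n$ with $s_j \not\le w$. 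By the subword property (Theorem \ref{subwordproperty}), $s_j \le w$ if and only if some (equivalently, every) reduced word for $w$ contains the letter $s_j$. So the whole statement reduces to the combinatorial claim: $w$ is reducible if and only if there exists $j < n$ such that no reduced word for $w$ uses $s_j$.

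For the first direction, I would assume $w$ is reducible, so there is $j < n$ with $\{w(1),\ldots,w(j)\} = \{1,\ldots,j\}$. This means $w$ restricts to a permutation of $\{1,\ldots,j\}$ and separately to a permutation of $\{j+1,\ldots,n\}$; in one-line notation $w$ never moves an element across the barrier between positions $j$ and $j+1$. Consequently $w$ lies in the parabolic (Young) subgroup $S_{\{1,\ldots,j\}} \times S_{\{j+1,\ldots,n\}}$, which is generated by $\{s_1,\ldots,s_{j-1}\} \cup \{s_{j+1},\ldots,s_{n-1}\}$ — the simple reflection $s_j$ is excluded. Since every element of a standard parabolic subgroup has all its reduced words inside the corresponding subset of generators (a standard Coxeter fact, provable directly from the exchange condition), no reduced word of $w$ contains $s_j$, hence $s_j \not\le w$ and the edge $(j \to j+1)$ is absent from $G_{\id,w}$, making it disconnected.

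For the converse, suppose $G_{\id,w}$ is disconnected, so there is a $j<n$ with $s_j \not\le w$; by the subword property no reduced word of $w$ contains $s_j$, and thus $w$ lies in the subgroup generated by $\{s_i \mid i \neq j\}$. Every generator $s_i$ with $i < j$ fixes $\{1,\ldots,j\}$ setwise and every generator $s_i$ with $i > j$ fixes $\{j+1,\ldots,n\}$ setwise, so $w$ preserves the block decomposition and satisfies $\{w(1),\ldots,w(j)\} = \{1,\ldots,j\}$, i.e. $w$ is reducible. Finally, the dimension count follows immediately: by \eqref{eq-KL-cone-dim} we have $\dim(\sigma_{v,w}) = |V(G_{\id,w})| - (\text{number of connected components})$, which equals $n-1$ exactly when $G_{\id,w}$ is connected. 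The main subtlety to handle carefully is the standard-parabolic fact that reduced words of an element stay within the generating set of the smallest parabolic containing it; I would either cite it from \cite{bjoernerbrenti} or deduce it cleanly from the subword property, since it is the hinge connecting the algebraic condition $s_j \not\le w$ to the combinatorial block structure defining reducibility.
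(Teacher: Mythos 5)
Your proof is correct, and it reaches the same structural reduction as the paper (for $v=\id$ the graph $G_{\id,w}$ is a spanning subgraph of the path $1-2-\cdots-n$, so connectivity is equivalent to $s_j\le w$ for every $j<n$), but you establish the key equivalence ``$s_j\not\le w$ for some $j$ iff $w$ is reducible'' by a different mechanism. The paper splits the two directions between two different tools: for irreducible $\Rightarrow$ connected it works directly with the rank-matrix description of Bruhat order (Definition~\ref{def_Bruhat_mat}), showing $s_{i-1}\le w$ iff $\rank_w(i,i-1)\ge 1$ and that vanishing of this rank forces $w(\{1,\dots,i-1\})=\{1,\dots,i-1\}$; for reducible $\Rightarrow$ disconnected it runs an inductive descent-set argument to show $s_j$ never occurs in a reduced word and then invokes the subword property. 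You instead use a single uniform argument in both directions via the standard parabolic subgroup $\langle s_i \mid i\ne j\rangle = S_{\{1,\dots,j\}}\times S_{\{j+1,\dots,n\}}$, together with the subword property; as you note, only the weaker fact that \emph{some} reduced word of an element of a standard parabolic avoids $s_j$ is needed, and that follows from the deletion condition, so the hinge you flag is genuinely routine. Your route is arguably cleaner and more symmetric (and avoids the paper's slightly informal descent-set induction), while the paper's rank-condition argument has the virtue of staying within the matrix-theoretic description of Bruhat order used throughout the rest of the paper and of identifying concretely which entries $z_{i,i-1}$ fail to be unexpected zeros. Both are complete proofs.
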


\begin{proof}
Suppose that $w$ is irreducible.
It suffices to show that for $i=2,\ldots,n$ the variable $z_{i,i-1}$ is not an unexpected 0; equivalently, each $(i-1 \to i)$ is an edge of $G_{v,w}$. We check the conditions of \eqref{eq-edges-kl-graph}.  
We have $t_{i-1,i} v = t_{i-1,i}=s_{i-1}$.
Since $\rank_{s_i}(a,b)=0$ for all $a> b$ such that $(a,b)\neq (i,i-1)$, it follows from Definition~\ref{def_Bruhat_mat} that $s_{i-1}\le w$ if and only if $\rank_w(i,i-1)\geq 1$.
Note that if $\rank_w(i,i-1)= 0$, then $w$ would not map any element of $\{1,\ldots,i-1\}$ to an element of $\{i,\ldots,n\}$, contradicting that $w$ is irreducible. 
We conclude that $(i-1 \to i)$ is an edge of $G_{v,w}$. 

Next, assume that $w$ is reducible and let $j$ be as in \eqref{eq-reducible}.
Consider the descent set of $w$ $$\text{des} (w): = \{i\in[n] \ | \  w(i) > w(i+1)\}.$$ Every reduced word expression of $w$ is found by pulling out $s_i$ for $i \in \text{des}(w)$ and replacing $w$ with $s_i w$ to perform the same step until obtaining $\id$. 
Since $w$ is reducible as described, $w(j) < w(j+j')$ for all $j' \in [n-j]$ and this property is preserved while constructing a reduced word expression for $w$. 
Thus $s_j$ is not a factor of a reduced word expression of $w$. This means that $(j \to j+1) \notin E(G_{v,w})$ and $s_j \nleq w$. Assume that there exists an edge $(k \to k') \in E(G_{v,w})$ with $k \in [j]$ and $k' \in \{j+1, \ldots, n\}$. This means that $s_j \leq t_{k,k'} \leq w$, which is a contradiction.  
\end{proof}

\begin{rem}\label{obsirreducible}
The proof of Proposition~\ref{irreducibleperm} implies that $w$ is irreducible if and only if $s_i$ appears in a reduced word expression of $w$ for all $i \in [n-1]$. 
\end{rem}

We end the case $v=\id$ with the following remark which gives an equivariant decomposition of $\KL_{\id,w}$ whenever $w\in S_{n_1}\times\cdots\times S_{n_m}$.

\begin{rem}
Let $w_1 \in S_{n_1},\ldots, w_m \in S_{n_m}$.
Given $w=w_{1} \times \ldots \times w_{m}\in S_{n_1}\times\cdots\times S_{n_m}$, the structure of $\oDia(w)$ tells us that if $z_{ij}$ is not an unexpected zero of $\KL_{\id,w}$, then 
    $$
    (i,j)\in \bigcup_{a=1}^m\left[\sum_{b=1}^{a-1} n_{b}+1,\sum_{b=1}^{a} n_{b}\right]^2.$$
That is, the permutation matrix of $w$ contains permutation matrices of $w_1,\ldots,w_m$ placed at the diagonal, and $(i,j)$ lies in one of these matrices.
This leads to a $T$-invariant decomposition of $\KL_{\id,w} = \KL_{\id,w_{1}}\times \cdots \times \KL_{\id,w_{m}}$, and to the decomposition $T = T_1\times \ldots \times T_m$ such that the action of~$T$ on~$\KL_{\id,w_{j}}$ factors through~$T_j$.
\end{rem}

\subsection{Case $v=s_a$}
In this section we investigate the complexity of the torus action when $v$ is a permutation of length one.
Note that in this setting, $\KL_{v,w}$ slightly differs from $s_a\Omega_{\id}^\circ \cap X_w$ by an affine part, which is a line. We can see in the next theorem that by increasing the dimension of $v$ by one, the difficulty of computing the complexity increases.

We look at the graph $G_{s_a,w}$, which has a simple structure. Note that the condition $\ell(t_{v(j),i} s_a) - \ell(s_a) =1$ in \eqref{eq-edges-kl-graph} is only satisfied by $s_i\neq s_a$ and the transpositions $t_{a-1,a+1},t_{a,a+2}$. 
It follows that 
    \begin{equation*}
    E(G_{s_a,w})\subset \{(a-1 \to a+1), (a \to a+2) \}\cup\{ (i \to i+1) \mid i\neq a\}.
    \end{equation*}
Moreover, for $i\notin\{a-1,a+1\}$ note that $ s_is_a\le w$ if and only if $s_i\le w$.
By \eqref{eq-edges-kl-graph} it follows that for $i\notin\{a-1,a,a+1\}$ we have  $ (i \to i+1) \in E(G_{s_a,w})$ if and only if $s_i\le w$. While the following result is not formulated in terms of graphs, its proof relies on the above description of~$G_{s_a,w}$.

\begin{thm}\label{vistransp}
The usual torus action on the KL~variety $\KL_{s_a,w}$ is of complexity-$k$
if and only if one of the following holds
\begin{enumerate}
    \item $|\{s_i \mid s_i \le w\}|=\ell(w) - 1-k$ and either $s_{a-1}s_a s_{a-1} \leq w$ or $s_{a+1} s_a s_{a+1} \leq w$.  
    \item $|\{s_i \mid s_i \le w\}|=\ell(w)-k$, $s_{a-1}s_a s_{a-1} \not\leq w$, and $s_{a+1} s_a s_{a+1} \not\leq w$.
\end{enumerate}
\end{thm}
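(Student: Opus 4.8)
The plan is to translate the complexity statement into one about $\dim(\sigma_{s_a,w})$ and compute that dimension from the graph $G_{s_a,w}$. By \eqref{eq-KL-complexity} the action has complexity $k$ exactly when $\dim(\sigma_{s_a,w})=l(w)-1-k$, so writing $D=\{i\in[n-1]\mid s_i\le w\}$ the two cases of the theorem are equivalent to the single assertion
\[
\dim(\sigma_{s_a,w})=
\begin{cases}
|D| & \text{if } s_{a-1}s_as_{a-1}\le w \text{ or } s_{a+1}s_as_{a+1}\le w,\\
|D|-1 & \text{otherwise.}
\end{cases}
\]
Since $\KL_{s_a,w}\neq\varnothing$ we have $s_a\le w$, i.e.\ $a\in D$ always. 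First I would record, using \eqref{eq-edges-kl-graph} and the computation preceding the theorem, the precise edges of $G_{s_a,w}$: away from $\{a-1,a,a+1\}$ an edge $(i\to i+1)$ is present iff $s_i\le w$, there is never an edge $(a\to a+1)$, and the only other possible edges are the four \emph{local} edges $e_1=(a-1\to a)$, $e_3=(a-1\to a+1)$, $e_4=(a\to a+2)$, $e_2=(a+1\to a+2)$, present respectively iff $s_{a-1}s_a\le w$, $s_as_{a-1}\le w$, $s_as_{a+1}\le w$, $s_{a+1}s_a\le w$.

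Next I would compare $G_{s_a,w}$ to the forest $G_{\id,w}$, whose rank is $|D|$ (as in the proof of Theorem~\ref{visid}). The two graphs share all edges $(i\to i+1)$ with $i\in D\setminus\{a-1,a,a+1\}$; these \emph{far} edges form a subforest of rank $|D|-\#\!\left(D\cap\{a-1,a,a+1\}\right)$ in which $a-1$ and $a+2$ lie in two distinct components $A$ and $B$, while $a$ and $a+1$ are isolated. On the four nodes $A,\{a\},\{a+1\},B$ the four local edges trace the $4$-cycle $A,a,B,a+1$ (via $e_1,e_4,e_2,e_3$), so by Lemma~\ref{dimofcone} the rank of $\sigma_{s_a,w}$ equals $|D|-\#\!\left(D\cap\{a-1,a,a+1\}\right)$ plus the rank contributed by whichever local edges are present. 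A short case analysis on whether $s_{a-1}\le w$ and $s_{a+1}\le w$ then shows this contribution equals $\#\!\left(D\cap\{a-1,a,a+1\}\right)$ — giving $\dim(\sigma_{s_a,w})=|D|$ — exactly when $e_1,e_3$ are both present or $e_2,e_4$ are both present, and equals one less otherwise.

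The combinatorial heart is to match ``$e_1,e_3$ both present'' with $s_{a-1}s_as_{a-1}\le w$ (and symmetrically for $e_2,e_4$ and $s_{a+1}s_as_{a+1}\le w$), which rests on two Bruhat-order facts proved via the subword property (Theorem~\ref{subwordproperty}). First, if $s_{a-1}\le w$ and $s_a\le w$ then $s_{a-1}s_a\le w$ or $s_as_{a-1}\le w$: in a fixed reduced word of $w$ both letters $s_{a-1},s_a$ occur, and whichever occurs first, paired with a later occurrence of the other, is a reduced subword of the desired length-$2$ product. This guarantees that when $s_{a-1}\le w$ at least one of $e_1,e_3$ is present (and symmetrically for $e_2,e_4$), so the rank contribution is never below $\#\!\left(D\cap\{a-1,a,a+1\}\right)-1$. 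Second, since $t_{a-1,a+1}=s_{a-1}s_as_{a-1}=s_as_{a-1}s_a$, the forward implication $s_{a-1}s_as_{a-1}\le w\Rightarrow(s_{a-1}s_a\le w\text{ and }s_as_{a-1}\le w)$ is immediate; for the converse I would use the elementary fact that a word in two letters $x,y$ containing both $xy$ and $yx$ as subsequences must contain $xyx$ or $yxy$ as a subsequence, applied to the subsequence of $s_{a-1},s_a$-letters of a reduced word of $w$.

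Assembling these, I would run the case analysis on $\bigl(s_{a-1}\le w,\ s_{a+1}\le w\bigr)$: in each case the two facts pin down exactly which local edges occur, the $4$-cycle structure gives the rank contribution, and one checks it equals $\#\!\left(D\cap\{a-1,a,a+1\}\right)$ precisely when $e_1,e_3$ are both present or $e_2,e_4$ are both present, i.e.\ precisely when $s_{a-1}s_as_{a-1}\le w$ or $s_{a+1}s_as_{a+1}\le w$. Feeding this into the displayed dimension formula yields the two cases. I expect the main obstacle to be the reverse direction of the second Bruhat fact — upgrading the presence of $e_1$ and $e_3$ to $t_{a-1,a+1}\le w$ via the $xyx/yxy$ subword argument — together with ruling out an unexpectedly small rank contribution (handled by the first fact); the boundary cases $a=1$ and $a=n-1$, where one triple of conditions is vacuous, are covered by the same analysis with the out-of-range reflections treated as absent.
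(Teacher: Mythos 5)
Your proposal is correct and follows essentially the same route as the paper: identify the possible edges of $G_{s_a,w}$ (the far edges $(i\to i+1)$ for $i\neq a$ plus the four local edges with the Bruhat conditions you list), use the subword property to show that a pair $\{(a-1\to a),(a-1\to a+1)\}$ (resp.\ $\{(a\to a+2),(a+1\to a+2)\}$) is fully present exactly when $s_{a-1}s_as_{a-1}\le w$ (resp.\ $s_{a+1}s_as_{a+1}\le w$), and then compute $\dim(\sigma_{s_a,w})$ via Lemma~\ref{dimofcone}, the only non-forest case being when all four local edges close the single $4$-cycle. Your packaging of the local analysis as a $4$-cycle on contracted components and your explicit $xyx/yxy$ subword lemma just make precise steps the paper's case analysis leaves implicit; the substance and conclusion are the same.
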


\begin{proof}
Consider the following cases:
\begin{enumerate} 
    \item 
     Suppose that exactly one of $\{ (a-1 \to a), (a-1 \to a+1) \} \subset E(G_{v,w})$ or $\{(a \to a+2), (a+1 \to a+2) \} \subset E(G_{v,w})$ holds. Then, by~\Cref{lem-weights}, $s_{a-1} s_a \leq w$ and $t_{a-1,a+1} s_a = s_a s_{a-1} \leq w$ or $s_{a+1} s_a \leq w$ and $t_{a,a+2} s_a = s_a s_{a+1} \leq w$. 
     By the subword property, this holds if and only if $s_a s_{a-1} s_a=s_{a-1}s_a s_{a-1} \leq w$ or $s_a s_{a+1} s_a =s_{a+1} s_a s_{a+1} \leq w$.
     Since in both cases $G_{v,w}$ is a forest and exactly one of $s_{a-1},s_{a+1}\le w$ holds,
        $$
        \dim(\sigma_{v,w})=|E(\sigma_{v,w})|=1+|\{s_i \mid s_i\le w,\ i\neq a\}|=|\{s_i \mid s_i\le w\}|
        $$
      We conclude that in this case, $\KL_{s_a,w}$ has complexity-$k$ if and only if $|\{s_i \mid s_i\le w\}|=\ell(w)-1-k$.
     \item
     Suppose that $\{(a-1 \to a) , (a-1 \to a+1) , (a \to a+2) , (a+1 \to a+2) \} \subset E(G_{v,w})$. Arguing as above, this holds if and only if $s_{a-1}s_a s_{a-1} \leq w$ and $s_{a+1} s_a s_{a+1} \leq w$.
     In this case, $G_{v,w}$ has exactly one undirected cycle and the number of connected components is $n+1-|E(G_{v,w})|$, i.e.~$\dim(\sigma_{v,w})=|E(G_{v,w})|-1$.
     Moreover, since $s_{a-1},s_{a+1}\le w$ we have
        $$
        |E(G_{v,w})|=2+|\{s_i \mid s_i\le w,\ i\neq a\}|=1+|\{s_i \mid s_i\le w\}|.
        $$
    We conclude that in this case, $\KL_{s_a,w}$ has complexity-$k$ if and only if $|\{s_i \mid s_i\le w\}|=\ell(w)-1-k$.
      \item Suppose that at most one edge of each set $\{ (a-1 \to a) , (a-1 \to a+1) \}$ and $\{ (a\to a+2), (a+1 \to a+2) \}$ is in $E(G_{v,w})$.
      By the previous two cases, this holds if and only if $s_{a-1}s_a s_{a-1} \not\leq w$ and $s_{a+1} s_a s_{a+1} \not\leq w$.
      Since $G_{v,w}$ is a forest we have that 
        $$\dim(\sigma_{v,w})=|E(G_{v,w})|=|\{s_i\mid s_i\le w\}|-1.$$
    It follows that $\KL_{s_a,w}$ has complexity-$k$ if and only if $|\{s_i \mid s_i\le w\}|=\ell(w)-k$. \qedhere
\end{enumerate}
\end{proof}

\begin{cor}\label{simplereflectionclasstoric}
If the KL~variety $\KL_{s_a,w}$ is toric, then it is an affine space.
\end{cor}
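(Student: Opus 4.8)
The plan is to follow the template already used for the case $v=\id$ (the argument immediately after Theorem~\ref{visid}): reduce the statement to showing that toricity forces the graph $G_{s_a,w}$ to be a forest, and then invoke Theorem~\ref{thm-primitive} to conclude that its edge ideal vanishes. Indeed, if $\KL_{s_a,w}$ is toric then its usual torus action has complexity $0$, so $\KL_{s_a,w}$ is the affine toric variety of its weight cone $\sigma_{s_a,w}$; by the discussion following Definition~\ref{klgraph} we have $\sigma_{s_a,w}=\sigma^\vee_{G_{s_a,w}}$, whence $\KL_{s_a,w}\cong\TV(G_{s_a,w})$. Once $G_{s_a,w}$ is known to be a forest, Theorem~\ref{thm-primitive} shows it has no primitive cycle, so the edge ideal is zero and $\TV(G_{s_a,w})$ is an affine space.

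The first concrete step is to exploit the very rigid shape of $G_{s_a,w}$. By the description preceding Theorem~\ref{vistransp}, every edge lies in
\[
H:=\{(i\to i+1)\mid i\neq a\}\cup\{(a-1\to a+1),\ (a\to a+2)\}.
\]
As an undirected graph, $H$ is a single $4$-cycle on the vertices $a-1,a,a+1,a+2$ (with edges $(a-1)\,a$, $(a-1)\,(a+1)$, $a\,(a+2)$, $(a+1)\,(a+2)$) together with two disjoint paths attached at $a-1$ and $a+2$. Hence $G_{s_a,w}$ contains an undirected cycle if and only if it contains all four of these edges, which is exactly the configuration treated in case~(2) of the proof of Theorem~\ref{vistransp}, namely $s_{a-1}s_as_{a-1}\le w$ and $s_{a+1}s_as_{a+1}\le w$. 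In every other configuration $G_{s_a,w}$ is already a forest, and the argument above finishes.

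Thus the entire content of the corollary is concentrated in the claim that the $4$-cycle configuration is incompatible with toricity. This is the step I expect to be the main obstacle, and it is genuinely delicate: the graph-theoretic bookkeeping alone does not settle it. Following case~(2) of the proof of Theorem~\ref{vistransp}, when all four edges are present $G_{s_a,w}$ has exactly one independent cycle, so $\dim(\sigma_{s_a,w})=|E(G_{s_a,w})|-1=|\{s_i\mid s_i\le w\}|$; toricity would then require $|\{s_i\mid s_i\le w\}|=l(w)-1$. The heart of the matter is therefore a Bruhat-order estimate: one must show that the two cube conditions $s_{a\pm1}s_as_{a\pm1}\le w$ force $l(w)\ge|\{s_i\mid s_i\le w\}|+2$, so that $\dim(\sigma_{s_a,w})\le l(w)-2<l(w)-1$ and the complexity is at least $1$. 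Here the subword property (Theorem~\ref{subwordproperty}) only yields the presence of repeated letters in a reduced word of $w$, and the subtle point is whether a single repetition can simultaneously witness both cube relations; this is precisely where the argument must be carried out with care, and where the interaction between the two long edges $(a-1\to a+1)$, $(a\to a+2)$ and the adjacent edges near $a$ has to be controlled before combining with the forest case to conclude.
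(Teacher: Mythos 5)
Your reduction and your identification of the unique potential cycle (the $4$-cycle on $a-1,a,a+1,a+2$, present exactly when $s_{a-1}s_as_{a-1}\le w$ and $s_{a+1}s_as_{a+1}\le w$) match the paper's proof precisely. But you stop at the decisive point: you state that one must show the two cube conditions force $l(w)\ge|\{s_i\mid s_i\le w\}|+2$, you correctly isolate the subtlety (whether a single repetition in a reduced word can witness both cube relations at once), and then you do not resolve it. As submitted, this is a genuine gap and the proposal is not a proof.

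Your hesitation at that step is, however, exactly on target, and the gap cannot be closed. Take $a=2$ and $w=s_2s_1s_3s_2=3412$: this word is reduced of length $4$, and by the subword property (Theorem~\ref{subwordproperty}) it contains $s_2s_1s_2=s_1s_2s_1$ (positions $1,2,4$) and $s_2s_3s_2=s_3s_2s_3$ (positions $1,3,4$), so both cube conditions hold while $|\{s_i\mid s_i\le w\}|=3=l(w)-1$. Thus a single repetition does witness both relations, the estimate you wanted is false, and the $4$-cycle configuration is compatible with complexity zero. The paper's own proof asserts without justification that in this configuration ``$w$ admits at most $l(w)-2$ distinct simple reflections,'' which this example contradicts; and indeed Remark~\ref{rem: concludingremark} of the paper computes $\KL_{s_2,3412}$ to be the quadric cone $z_{21}z_{43}+z_{31}z_{42}=0$, a normal toric variety of dimension $3$ that is not an affine space. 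So the obstacle you flagged is real, it is present in the paper's argument as well, and the corollary as stated fails for $\KL_{s_2,3412}$; any correct version must exclude the case where both $s_{a-1}s_as_{a-1}\le w$ and $s_{a+1}s_as_{a+1}\le w$.
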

\begin{proof}
By Theorem~\ref{vistransp}, the only case in which we do not obtain a forest for $G_{s_a,w}$ is when $s_{a-1}s_a s_{a-1}\leq w$ and $s_{a+1}s_as_{a+1} \leq w$.
In this case, $w$ admits at most $\ell(w)-2$ distinct simple reflections in a reduced word expression.
However, this contradicts that $\KL_{s_a,w}$ is toric, since we would need $|\{s_i \mid s_i \le w\}|=\ell(w) - 1$. We conclude by Proposition~\ref{smoothtoricKL} that $\KL_{s_a,w}$ is an affine space. 
\end{proof}

The next result, which follows from Theorem~\ref{vistransp}~(1), gives certain family of complexity-1 $T$-varieties which are not always affine spaces. The approach with reduced word expressions simplifies producing examples and describing large classes of examples in a compact way.

\begin{cor}\label{cor: complexity1family}
Let $w\in S_n$.
The KL variety $\KL_{s_{n-1},w}$ is a complexity-1 $T$-variety if and only if either
\begin{enumerate}
    \item $s_{n-2}s_{n-1} s_{n-2}\leq w$ and $|\{s_i \mid s_i \le w\}| = \ell(w) -2$, or
    \item $s_{n-2}s_{n-1} s_{n-2}\not\leq w$ and $|\{s_i \mid s_i \le w\}| = \ell(w) -1$.
\end{enumerate}
Moreover, the weight cone of $\KL_{s_{n-1},w}$ is smooth.
\end{cor}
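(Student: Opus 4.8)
The characterization in parts (1)--(2) will follow immediately by specializing Theorem~\ref{vistransp} to the case $a=n-1$ and $k=1$. The key observation is that $s_n$ is not a simple reflection of $S_n$, so the condition $s_{a+1}s_as_{a+1}\le w$ occurring in that theorem becomes $s_ns_{n-1}s_n\le w$, which is vacuous: it can never hold. Thus part~(1) of Theorem~\ref{vistransp}, which requires $s_{a-1}s_as_{a-1}\le w$ or $s_{a+1}s_as_{a+1}\le w$, reduces to requiring $s_{n-2}s_{n-1}s_{n-2}\le w$ together with $|\{s_i\mid s_i\le w\}|=l(w)-2$, yielding case~(1); and part~(2), which requires both braid conditions to fail, reduces to requiring $s_{n-2}s_{n-1}s_{n-2}\not\le w$ together with $|\{s_i\mid s_i\le w\}|=l(w)-1$, yielding case~(2).

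For the \emph{moreover} statement I would prove the stronger fact that the graph $G_{s_{n-1},w}$ is always a forest, since the edge cone of a forest is automatically smooth. Using the description of $E(G_{s_a,w})$ given just before Theorem~\ref{vistransp}, specialized to $a=n-1$, the transposition $t_{a,a+2}=t_{n-1,n+1}$ does not exist, so
$$E(G_{s_{n-1},w})\subseteq\{(i\to i+1)\mid 1\le i\le n-2\}\cup\{(n-2\to n)\}.$$
There is no edge $(n-1\to n)$, as it would correspond to $s_{n-1}=s_a$ and is excluded by the length condition $l(t_{v(j),i}v)-l(v)=1$. Hence the only edge that can meet vertex $n-1$ is $(n-2\to n-1)$ and the only edge that can meet vertex $n$ is $(n-2\to n)$, so both $n-1$ and $n$ are leaves (or isolated). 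Deleting them leaves a subgraph of the path $1-2-\cdots-(n-2)$, which is acyclic; therefore $G_{s_{n-1},w}$ has no cycle and is a forest.

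Finally I would invoke the standard fact that the incidence matrix of a forest is totally unimodular with linearly independent columns, so the ray generators $\{e_p-e_q\mid(p\to q)\in E(G_{s_{n-1},w})\}$ of $\sigma_{s_{n-1},w}=\sigma^\vee_{G_{s_{n-1},w}}$ extend to a $\ZZ$-basis of the character lattice; this makes the cone smooth. The main point---and the only place where the hypothesis $v=s_{n-1}$ is genuinely used---is the forest claim: for a general $a$, Theorem~\ref{vistransp}(2) produces a $4$-cycle through the vertices $a-1,a,a+1,a+2$, but for $a=n-1$ the vertex $a+2=n+1$ lies outside $[n]$, so the two closing edges $(a\to a+2)$ and $(a+1\to a+2)$ are absent and no cycle can form. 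Ruling out this cycle is the crux of the argument, after which smoothness is a routine consequence of the forest structure.
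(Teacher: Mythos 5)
Your proposal is correct and follows the same route the paper intends: the paper derives this corollary directly from Theorem~\ref{vistransp} without writing out a separate proof, and your specialization to $a=n-1$ (noting that $s_{a+1}=s_n$ does not exist in $S_n$, so the second braid condition is vacuous and the $4$-cycle of case~(2) of that theorem cannot close) is exactly the intended argument. Your forest-implies-smooth argument for the \emph{moreover} clause is the natural completion of what the paper leaves implicit, and it is sound.
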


Note that the weight cone of $\KL_{s_{n-1},w}$ is smooth if and only if $G_{s_{n-1},w}$ is a forest (Theorem~\ref{thm-primitive}).

\begin{ex}\label{interestingcomplexity1}
Let $v=s_5=123465$ and $w=s_1s_3s_2s_4s_3s_5s_4=245613$ in $S_6$. The permutation $w$ is irreducible and admits $\ell(w)-2$ distinct simple reflections in its reduced form where $s_3$ has a repetition. This is Case 1 in Theorem~\ref{vistransp} and thus $\KL_{v,w}$ is complexity-1.
\begin{center}
\begin{tikzpicture}[baseline=1,scale=1.2,every path/.style={>=latex},every node/.style={draw,circle,fill=white,scale=0.6}]

   \node       (v2) at (0.75,0) {\bf{1}};
  \node           (v3) at (1.5,0)  {\bf{2}};

  \node         (v4) at (2.25,0) {\bf{3}};
   \node           (v5) at (3,0)  {\bf{4}};
     \node           (v6) at (3.75,0)  {\bf{5}};
      \node           (v7) at (4.5,0)  {\bf{6}};

 \draw[->] (v4) edge (v5);
  \draw[->] (v3) edge (v4);
   \draw[->] (v5) edge (v6);

    \draw[->] (v2) edge (v3);

  \draw[->] (v5) .. controls (3.70,0.5) and (3.80,0.5) ..  (v7);

\end{tikzpicture}
\end{center}
After imposing the Fulton conditions from $\oDia(w)$ on $Z^{(v)}$, we obtain the second matrix
\[
\begin{bmatrix}
    1       & 0 & 0 & 0  & 0 & 0\\
    z_{21} & 1 & 0 & 0  &0 & 0\\
   z_{31}  & z_{32} & 1 & 0& 0 & 0  \\
  z_{41}       & z_{42} & z_{43} & 1 & 0 & 0 \\
  z_{51} & z_{52} & z_{53}& z_{54}& 0& 1 \\
   z_{61} & z_{62} & z_{63}& z_{64}& 1&0 
\end{bmatrix}
\qquad
\begin{bmatrix}
    1       & 0 & 0 & 0  & 0 & 0\\
    z_{21} & 1 & 0 & 0  &0 & 0\\
  0   & z_{32} & 1 & 0& 0 & 0  \\
  0       & z_{42} & z_{43} & 1 & 0 & 0 \\
  0 & 0 & z_{53}& z_{54}& 0& 1 \\
   0 & 0 & 0 & z_{64}& 1&0 
\end{bmatrix}
\] 
with $z_{32} z_{43} z_{54} - z_{53} z_{32} - z_{54} z_{42} = 0$.
\end{ex}

In Proposition~\ref{irreducibleperm}, we observed that the dimension of weight cone of $\KL_{\id,w}$, i.e.\ the dimension of $\sigma_{\id, w}$ is maximal possible, if $w$ is irreducible. It turns out that this does not hold for $\sigma_{s_a,w}$ but it does for the dimension of the weight cone of $v \Omega_{\id}^\circ \cap X_w$ for all $v \leq w$.

\begin{prop}\label{thm: complexityfix}
Let $w \in S_n$ be an irreducible permutation. Then $\Gamma_w(v)$ is connected for all $v\leq w$. In particular, $v\Omega_{\id}^{\circ} \cap X_w$ is a $T$-variety of complexity $\ell(w)-n+1$ with respect to the $(n-1)$-dimensional usual torus action, i.e.\ as large as possible. 
\end{prop}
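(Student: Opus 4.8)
The plan is to deduce both assertions from the \emph{effectiveness} of the torus action, using the already-settled case $v=\id$ (Proposition~\ref{irreducibleperm}) as the single input that the weights span in full dimension at one fixed point. Recall that the torus $T$ acts through its $(n-1)$-dimensional quotient $\bar T = T/Z$, with $Z$ the scalar matrices acting trivially on $G/B$; that $U:=v\Omega_{\id}^\circ\cap X_w$ is an affine open neighborhood of $vB$ in $X_w$ of dimension $\ell(w)$; and that its weight cone is $D_w(v)$, the edge cone of $\Gamma_w(v)$. By Lemma~\ref{dimofcone} it then suffices to prove $\dim D_w(v)=n-1$: this forces $\Gamma_w(v)$ to be connected and, via the complexity computation of Section~\ref{sec-general-t-varieties}, gives complexity $\ell(w)-(n-1)=\ell(w)-n+1$.

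First I would establish that $\bar T$ acts effectively on $X_w$. A subtorus $S\subseteq\bar T$ acting trivially on $X_w$ acts trivially on the chart $\Omega_{\id}^\circ\cap X_w=\KL_{\id,w}$ at the fixed point $\id B$, so every weight of that action vanishes on $S$. But for irreducible $w$, Proposition~\ref{irreducibleperm} gives $\dim\sigma_{\id,w}=n-1$, so these weights span the character lattice of $\bar T$ and therefore $S$ is trivial. Next I would propagate effectiveness to $U$: since $X_w$ is irreducible and $U$ is nonempty open, $U$ is dense, so a subtorus acting trivially on $U$ acts trivially on $\overline U=X_w$ and hence is trivial by the previous step. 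Thus $\bar T$ acts effectively on $U$.

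By the discussion in Section~\ref{sec-general-t-varieties}, an effective torus action has weight cone of dimension equal to that of the acting torus; concretely, the generic stabilizer $\bigcap_{\chi\ \mathrm{present}}\ker\chi$ lies in the (now trivial) kernel, so the present weights span a space of dimension $\dim\bar T=n-1$. Granting that this weight cone coincides with $D_w(v)$ (discussed below), we obtain $\dim D_w(v)=n-1$, whence $\Gamma_w(v)$ is connected and $U$ has complexity $\ell(w)-n+1$, as claimed.

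The only genuinely delicate point is the identification of the weight cone of the $\bar T$-action on $U$ with the edge cone $D_w(v)$ of $\Gamma_w(v)$, i.e.\ that the length-one (cover) weights already generate the cone of all surviving tangent weights $\{\epsilon_{v(i)}-\epsilon_{v(j)}\mid t_{v(i),v(j)}v\le w\}$; this is the $v\Omega_{\id}^\circ\cap X_w$ analogue of Theorem~\ref{permittingdeletingedges} recorded in Remark~\ref{rem-LMgraph}, and it is exactly what lets effectiveness, which a priori only controls the cone of \emph{all} weights, pin down the dimension of $D_w(v)$ and hence the connectivity of $\Gamma_w(v)$. An alternative, purely combinatorial route would avoid this identification: reduce connectivity to showing that for each $j$ the values $v(j),v(j+1)$ lie in one component; observe that descents and ascents with $vs_j\le w$ give direct edges; and handle a ``bad'' ascent (where $vs_j\not\le w$, which by the lifting property forces $ws_j>w$) by routing through a straddling down-cover edge, noting such an edge exists unless $v$ splits as a direct sum at position $j$. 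The main obstacle in that approach is precisely this last case: across a block boundary of $v$ one must produce a connecting up-cover edge lying below $w$, and it is here that the global hypothesis of irreducibility of $w$ must enter rather than any local feature of $v$.
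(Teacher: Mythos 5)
Your argument is correct and is essentially the paper's: both reduce to the case $v=\id$ via Proposition~\ref{irreducibleperm} and then propagate the conclusion to every chart $v\Omega_{\id}^{\circ}\cap X_w$ using its density in the irreducible $X_w$. The paper packages the density step as ``the complexity does not change on $T$-invariant dense open neighborhoods'' rather than as propagation of effectiveness, but the mechanism --- and the reliance on identifying the weight cone of the chart with the edge cone of $\Gamma_w(v)$, as in Remark~\ref{rem-LMgraph} --- is the same.
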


\begin{proof}
The complexity does not change for the $T$-invariant open neighborhoods in $X_w$, i.e.\ for $v\Omega_{\id}^{\circ} \cap X_w$ with $v \leq w$. Since $w$ is irreducible, by Proposition~\ref{irreducibleperm}, $\id \Omega_{\id}^{\circ} \cap X_w$ is a complexity-$d$ $T$-variety with connected $\Gamma_w(\id)$, where $d = \ell(w) - n +1$. Thus for any $v\leq w$, $v\Omega_{\id}^{\circ} \cap X_w$ is also a complexity-$d$ $T$-variety with respect to the $(n-1)$-dimensional usual torus action.
\end{proof}

\begin{ex}
For $v = s_3$ and $w=s_1 s_3 s_2 s_3 s_5 s_4=243615$, one obtains that while $\Gamma_w(v)$ is connected, $G_{v,w}$ has 2 connected components.
\end{ex}

\subsection{Observations for the remaining cases}
Let us examine some easy cases first:
\begin{enumerate}
    \item  The KL~variety $\KL_{w,w}$ is the point $wB$ and the graph $G_{v,w}$ is empty so it is a toric variety.
\item If $v=w_0$, then we have only one possible permutation $v=w$ and hence we are in the preceding case.
\item If $w=w_0$, then we obtain that $\KL_{v,w_0} \cong \CC^{|\oDia(v)|}$ and $E(\widetilde{G_{v,w}})$ corresponds to all non-inversions of $v$.
\item Let $v, w \in S_n$ with $v \leq w$ and $\ell(w) - \ell(v) =1$. Then by~\cite[Prop.~1.3.5]{brion}, $\KL_{v,w} \cong \CC^1$. From \eqref{eq-edges-kl-graph} it follows that $\dim(\sigma_{v,w})=|E(G_{v,w})|=1$ and thus $\KL_{v,w}$ is toric.
\end{enumerate}

\begin{rem}\label{rem: concludingremark}
One may think more generally about investigating the complexity of the torus action on a KL variety in the following way. Fix $w\in S_n$ and consider a chain of permutations $v_0=\id < v_1 < \ldots < v_k =w$ where $\ell(v_i)=i$ and $v_{i+1} = s_{j_i} v_{i}$ for some $s_{j_i}$. 
That means that the pair $(j_i,j_i+1)$ is a non-inversion of $v_i$ and an inversion of $v_{i+1}$. 
Suppose that we aim to determine how the complexity of the torus action on $\KL_{v_i,w}$ changes as we go up on the chain.
In the following example, we will see that the change seems difficult to describe.

Let $w=3412$ and consider the chain $v_0=\id < v_1= s_2 < v_2=  s_3 s_2 <  v_3=s_1 s_3 s_2 < s_2 s_1 s_3 s_2 = v_4=w$. 
\begin{itemize}
    \item{$\KL_{v_0,w}$} is the hypersurface defined by~$z_{21} z_{32} z_{43} - z_{42}z_{21} - z_{43} z_{31}$. It is a complexity-1 $T$-variety by Theorem~\ref{visid}.
    \item By Theorem~\ref{vistransp}, {$\KL_{v_1,w}$} is a toric variety and it is given by $z_{21}z_{43} + z_{31} z_{42}$. 
    Note that $(2 \to 3) \in E(G_{v_0,w})\setminus E(G_{v_1,w})$ because $t_{2,3}$ is the only inversion of $s_2$.
    \item{$\KL_{v_2,w}\cong \CC^2 $} is toric. The edge $(3 \to 2) \in E_{w}(v_1)$ becomes an edge of $G_{v_2,w}$ and $(2 \to 4) \in E(G_{v_1,w})$ becomes an edge of $\Gamma_w(v_2)$. The reason for that is  $z_{43}$ from $Z^{(v_1)}$ vanishes in $Z^{(v_2)}$ and $z_{33}$ is a coordinate of $Z^{(v_2)}$ respectively. Moreover, after imposing Fulton's conditions from $\oDia(w)$, we obtain that $z_{31}$ is an unexpected~0, corresponding to the edge $(1 \to 3)$.
    \item{$\KL_{v_3,w}\cong \CC^1$} is toric. The edge $(1 \to 2) \notin G_{v_3,w}$ since $z_{21}$ is not a coordinate of $Z^{(v_3)}$. In particular $z_{33}$ is an unexpected zero corresponding to the edge $(1 \to 3)$. 
\end{itemize}

On the other hand, we can say more about the complexity of each $v_i \Omega_{\id}^\circ\cap X_w$.
By Proposition~\ref{thm: complexityfix}, $\Gamma_w(v_i)$ is connected, thus $v_i \Omega_{\id}^\circ\cap X_w$ is a complexity-1 $T$-variety for all $i \in \{0,1,2,3,4\}$. 

\begin{center}
    
\begin{tikzpicture}[baseline=1,scale=1.15,every path/.style={>=latex},every node/.style={draw,circle,fill=white,scale=0.6}]
   \node       (v1) at (0.75,0) {\bf{1}};
  \node           (v2) at (1.5,0)  {\bf{2}};
 \node         (v3) at (2.25,0) {\bf{3}};
   \node           (v4) at (3,0)  {\bf{4}};
      \node  [draw=none, fill=none, scale=1.5]         (v5) at (1.85,-0.7)  {$G_{v_0,w}$};
  \draw[->] (v1) edge (v2);
   \draw[->] (v2) edge (v3);
   \draw[->] (v3) edge (v4);
\end{tikzpicture}
\qquad
\begin{tikzpicture}[baseline=1,scale=1.15,every path/.style={>=latex},every node/.style={draw,circle,fill=white,scale=0.6}]
   \node       (v1) at (0.75,0) {\bf{1}};
  \node           (v2) at (1.5,0)  {\bf{2}};
  \node         (v3) at (2.25,0) {\bf{3}};
   \node           (v4) at (3,0)  {\bf{4}};

\node  [draw=none, fill=none, scale=1.5]         (v5) at (1.85,-0.7)  {$G_{v_1,w}$};

  \draw[->] (v1) edge (v2);
   \draw[->] (v3) edge (v4);
     \draw[->]  (v1).. controls (1.45,0.5) and (1.55,0.5) .. (v3);
  \draw[->] (v2) .. controls (2.2,-0.5) and (2.3,-0.5) ..  (v4);
    \draw[<-,red,dashed] (v2) edge (v3);

\end{tikzpicture}
\qquad
\begin{tikzpicture}[baseline=1,scale=1.15,every path/.style={>=latex},every node/.style={draw,circle,fill=white,scale=0.6}]

   \node       (v1) at (0.75,0) {\bf{1}};
  \node           (v2) at (1.5,0)  {\bf{2}};

  \node         (v3) at (2.25,0) {\bf{3}};
   \node           (v4) at (3,0)  {\bf{4}};

\node  [draw=none, fill=none, scale=1.5]         (v5) at (1.85,-0.7)  {$G_{v_2,w}$};

  \draw[->] (v1) edge (v2);
   \draw[->] (v2) edge (v3);
   
  \draw[<-,dashed,red] (v2) .. controls (2.2,-0.5) and (2.3,-0.5) ..  (v4);
   \draw[<-,dashed,red] (v3) edge (v4);
  \draw[->, cyan]  (v1).. controls (1.45,0.5) and (1.55,0.5) .. (v3);

\end{tikzpicture}
\qquad
\begin{tikzpicture}[baseline=1,scale=1.15,every path/.style={>=latex},every node/.style={draw,circle,fill=white,scale=0.6}]
   \node       (v1) at (0.75,0) {\bf{1}};
  \node           (v2) at (1.5,0)  {\bf{2}};
  \node         (v3) at (2.25,0) {\bf{3}};
   \node           (v4) at (3,0)  {\bf{4}};
   \node  [draw=none, fill=none, scale=1.5]         (v5) at (1.85,-0.7)  {$G_{v_3,w}$};
  \draw[->] (v2) edge (v3);
  \draw[<-,dashed,red] (v1) edge (v2);
  \draw[<-,dashed,red] (v1) .. controls (2,-0.5) and (2.1,-0.5) .. (v4);
 \draw[<-,dashed,red] (v3) edge (v4);
   \draw[->, cyan]  (v1).. controls (1.45,0.5) and (1.55,0.5) .. (v3);
  
\end{tikzpicture}


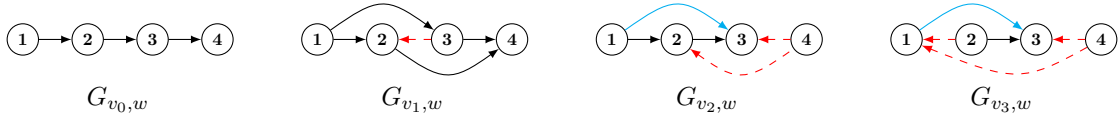
\captionof{figure}{$E_w(v) \backslash E(G_{v,w})$ is represented by red and the edges corresponding to unexpected 0s by blue.}\label{fig-graphs}
\end{center}
\end{rem}

Finally we introduce families of complexity-1 $T$-varieties in terms of reduced word expressions.
\begin{cor}\label{cor_intersection_compl}
Suppose that $w \in S_n$ is irreducible. Assume that a reduced word expression of $w$ contains
\begin{enumerate}
    \item $s_{i+1}s_is_{i+2}s_{i+1}$ as a factor and has no other repetitions. Then for any $v \leq w$, $v\Omega_{\id}^{\circ} \cap X_w$ is a complexity-1 $T$-variety of dimension $n$.
    \item $s_{i}s_{i+1}s_{i}$ as a
factor and has no other repetitions. Then for any $v \leq w$, $v\Omega_{\id}^{\circ} \cap X_w$ is a smooth complexity-1 $T$-variety of dimension $n$.
\end{enumerate}
\end{cor}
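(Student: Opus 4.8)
The plan is to establish the complexity and dimension uniformly in $v$, and then to treat smoothness by reducing to the chart at the identity and counting the transpositions lying below $w$. The complexity and dimension are the easy parts; the subword bookkeeping for smoothness is where the real content sits.

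First I would pin down $\ell(w)$. Since $w$ is irreducible, Remark~\ref{obsirreducible} shows that each simple reflection $s_1,\dots,s_{n-1}$ occurs in any reduced word of $w$, contributing $n-1$ distinct letters. In either case the prescribed string introduces exactly one repeated letter---$s_{i+1}$ in case (1), $s_i$ in case (2)---and there are no further repetitions, so the reduced word has length $n$ and $\ell(w)=n$. Proposition~\ref{thm: complexityfix} then gives that $v\Omega_{\id}^{\circ}\cap X_w$ is a $T$-variety of complexity $\ell(w)-n+1=1$ for every $v\le w$, of dimension $\dim X_w=\ell(w)=n$. This disposes of the complexity and dimension claims for all $v$ at once.

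For smoothness I would reduce to the identity chart $\KL_{\id,w}=\id\,\Omega_{\id}^{\circ}\cap X_w$. The singular locus of $X_w$ is closed and $B$-invariant, hence a union of Schubert varieties $X_u$ with $u\le w$; as each contains $\id\,B$ (because $\id\le u$), the variety $X_w$ is singular if and only if it is singular at $\id\,B$, equivalently if and only if $\KL_{\id,w}$ is singular. Thus in case (2) it suffices to prove $X_w$ smooth (whence every chart $v\Omega_{\id}^{\circ}\cap X_w$ is smooth), and in case (1) to exhibit $\KL_{\id,w}$ as singular. Using the tangent-space weight description at a $T$-fixed point employed in the proof of Lemma~\ref{lem-weights} and Remark~\ref{rem-LMgraph} (via \cite[Thm~5.5.3]{BL}), the weights $\epsilon_i-\epsilon_j$ with $i>j$ occurring in $T_{\id B}X_w$ are indexed exactly by the transpositions $t_{ij}\le w$, so $\dim T_{\id B}X_w=\lvert\{t\le w\}\rvert$, and $X_w$ is smooth precisely when this equals $\ell(w)=n$. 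The core is then a subword count via Theorem~\ref{subwordproperty}: the support of $t_{a,a+k}$ is $\{s_a,\dots,s_{a+k-1}\}$, so any reduced word of it has length $2k-1$ on $k$ generators; since $w$ has a single repeated letter, any subword of $w$ uses each letter at most once except possibly one letter used twice, giving length at most $k+1$, and comparing with $2k-1$ forces $k\le 2$. Hence only the $n-1$ simple transpositions and the length-three transpositions $t_{a,a+2}=s_as_{a+1}s_a=s_{a+1}s_as_{a+1}$ can lie below $w$, and such a $t_{a,a+2}$ occurs exactly when its repeated generator is the unique repeated letter of $w$ and the intermediate generator sits between the two occurrences. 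In case (2) the doubled letter $s_i$ lies in the contiguous factor $s_is_{i+1}s_i$, producing only $t_{i,i+2}$, so $\lvert\{t\le w\}\rvert=n$ and $X_w$ is smooth; in case (1) the doubled letter $s_{i+1}$ lies in $s_{i+1}s_is_{i+2}s_{i+1}$, with both $s_i$ and $s_{i+2}$ between its two occurrences, so both $t_{i,i+2}=s_{i+1}s_is_{i+1}$ and $t_{i+1,i+3}=s_{i+1}s_{i+2}s_{i+1}$ lie below $w$, whence $\lvert\{t\le w\}\rvert=n+1>\ell(w)$ and $X_w$ is singular.

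The step I expect to be most delicate is the second half of the count: verifying that the single repetition yields exactly the stated non-simple transpositions and no others, in particular that no further $t_{a,a+2}$ slips in through a commutation of the reduced word, which is where one genuinely uses that the prescribed string appears as a \emph{contiguous} factor. I would also flag one honest subtlety in case (1): non-smoothness is a property of $X_w$ detected at $\id\,B$, and it is not literally true that $v\Omega_{\id}^{\circ}\cap X_w$ is singular for every $v\le w$---smooth charts do appear higher in the Bruhat interval---so the cleanest formulation records smoothness of the entire family in case (2) and non-smoothness of $X_w$ (witnessed on the identity chart) in case (1).
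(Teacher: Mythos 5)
Your argument is correct, but it takes a genuinely different route from the paper's. The paper's entire proof is a two-line citation: the complexity and dimension follow from Proposition~\ref{thm: complexityfix} exactly as in your first paragraph, and the smoothness dichotomy is imported wholesale from \cite[Thm~1.2, 1.3]{leemasudapark}, which classify smooth versus singular complexity-one Schubert varieties by the presence of the factors $s_is_{i+1}s_i$ and $s_{i+1}s_is_{i+2}s_{i+1}$. You instead re-derive that dichotomy from scratch: reduce smoothness of $X_w$ to smoothness at $\id B$ via $B$-invariance of the singular locus, compute $\dim T_{\id B}X_w=\lvert\{t\le w\}\rvert$ from the weight description of \cite[Thm~5.5.3]{BL} already invoked in the proof of Lemma~\ref{lem-weights}, and then bound the transpositions below $w$ by the subword property (Theorem~\ref{subwordproperty}): a single repeated letter forces $t_{a,a+k}\le w$ only for $k\le 2$, and contiguity of the prescribed factor determines exactly which $t_{a,a+2}$ occur (one in case (2), two in case (1)). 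This buys a self-contained argument at the cost of length, whereas the paper buys brevity at the cost of opacity; your count is correct, and your worry about commutations is moot since the subword property may be applied to any one fixed reduced word. Your closing caveat is also well taken and is something the paper's citation-only proof glosses over: since $w\Omega_{\id}^\circ\cap X_w\cong\CC^{\ell(w)}$ is smooth, the literal claim in case (1) that \emph{every} chart $v\Omega_{\id}^\circ\cap X_w$ is non-smooth fails for $v$ high in the Bruhat interval; what is actually established is that $X_w$ is singular, witnessed on the identity chart, while the complexity and dimension statements do hold for all $v\le w$.
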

\begin{proof}
In (1), $X_w$ is singular and of complexity-1 by~\cite[Thm.~1.3]{leemasudapark}. Thus its open invariant subset $v\Omega_{\id}^{\circ} \cap X_w$ also has complexity-1. Since its complexity is~1, by Proposition~\ref{thm: complexityfix} we have $l(w) = n$, hence it is $n$-dimensional. The argument for (2) is analogous, based on \cite[Thm.~1.2]{leemasudapark}. In this case $X_w$ is smooth, hence its open subset also.
\end{proof}

\begin{ex}
Note that in Corollary~\ref{cor_intersection_compl} (1) the non-smoothness of $X_w$ does not imply that $v\Omega_{\id}^{\circ} \cap X_w$ has a singular point. For example, for $v=w=3412 = s_2 s_1 s_3 s_2$, we obtain a smooth variety $v\Omega_{\id}^{\circ} \cap X_w \simeq \mathbb{C}^4$.
\end{ex}

Although we may define a family of $T$-varieties of fixed dimensional torus action on $v\Omega_{\id}^{\circ} \cap X_w$, the dimension of weight cone of $\KL_{v,w}$ decreases by one or stays the same with this approach, see Remark~\ref{rem: concludingremark}. 
However, our results are sufficient to generate series of KL varieties of small complexity, which is an object of further investigation. Moreover, it is also promising to investigate $\oDia(v)$ by changing its shape under certain restrictions to obtain another $\oDia(v')$ while preserving the complexity of the $T$-varieties. This is an ongoing work of the authors.


\section*{Acknowledgments}
The authors are grateful for the support, hospitality, and the research environment of the Fields Institute for Research in Mathematics in Toronto, Canada. We are thankful to Klaus Altmann whose question gave rise to this paper, which is about browsing $T$-varieties in ``real life", instead of constructing them via p-divisors.
We would like to thank the anonymous referees for their careful reading of the work and their valuable suggestions. Finally, we are grateful to Alexander Yong for helpful exchanges.

MDB was partially supported by the Polish National Science Center project 2017/26/D/ST1/00755.
LE was partially supported by NSF Grant DMS 1855598 and NSF CAREER Grant DMS 2142656. IP is grateful for the hospitality of University of Warsaw and Washington University in St.\ Louis during the research visits in the process of this paper.

\bibliographystyle{alpha}
\bibliography{references}

\end{document}